\numberwithin{equation}{section}
\def\3bar{{|\hspace{-.02in}|\hspace{-.02in}|}}
\def\E{{\mathcal{E}}}
\def\T{{\mathcal{T}}}
\def\Q{{\mathcal{Q}}}
\def\dQ{{\mathbb{Q}}}
\def\b0{\boldsymbol{0}}
\def\bPhi{\boldsymbol{\Phi}}            %new
\def\sumT{\sum_{T\in\mathcal{T}_h}}     %new
\def\bw{{\mathbf{w}}}
\def\bu{{\mathbf{u}}}
\def\bv{{\mathbf{v}}}
\def\bl{{\mathbf{L}}}
\def\bh{{\mathbf{H}}}
\def\bi{{\mathbf{I}}}
\def\bn{{\mathbf{n}}}
\def\be{{\mathbf{e}}}
\def\bf{{\mathbf{f}}}
\def\bP{{\mathbf{P}}}
\newtheorem{example}{Example}[section]
\newtheorem{remark}{Remark}[section]
\newtheorem{algorithm1}{Weak Galerkin Algorithm}
\newtheorem{algorithm}{Finite element Algorithm}
\newcommand{\eps}{\varepsilon}
\newcommand{\di}{\text{div}}
\newcommand{\Real}{\mathbb{R}}
\newcommand{\la}{\langle}
\newcommand{\ra}{\rangle_{\partial T}}
\begin{document}
	
	\title{The Weak Galerkin and Crouzeix-Raviart element method for elastic eigenvalue problems}
	\author{
		Wei Lu, Hehu Xie, and Qilong Zhai
		\thanks{Department of Mathematics, Jilin University, Changchun,
			China. }
		}
	\maketitle
	
	\begin{abstract}
		In this paper, we first introduce an abstract framework to solve the eigenvalue problem by weak Galerkin (WG) method. By the application of the framework, WG method is proved to be locking-free and gives asymptotic lower bounds for the elastic eigenvalue problem. Also, we analyze the lower bound property for Crouzeix-Raviart (CR) element as an extensional work. In the end, we present some numerical experiments to support the theoretical results.
	\end{abstract}
	
	\begin{keywords} weak Galerkin method, Crouzeix-Raviart element, elastic eigenvalue problem, lower bounds, locking-free.
	\end{keywords}
	
	\begin{AMS}
		Primary, 65N30, 65N15, 65N12, 74N20; Secondary, 35B45, 35J50, 35J35
	\end{AMS}

	\section{Introduction}
Eigenvalue problem, especially the linear elastic eigenvalue problem, has received lots of attention since its broad applications in science and engineering, such as the deformation analysis of compressible and imcompressible elastic meterials \cite{Vogelius1983}. There has been quiet a few numerical methods to solve the elastic eigenvalue problem, such as finite element method \cite{Meddahi2013,Inzunza2023,Zhang2024} and finite diffrence method \cite{Yang2023,Lyu2024}.

The finite element method (FEM) is considered as an efficient way to deal with PDEs due to its straightforward and adaptivity on triangular meshes. It has been applied for many eigenvalue problems, such as Laplacian eigenvalue problem \cite{Liu2013,Grebenkov2013,Luo2012} and Stokes eigenvalue problem \cite{Feng2014,Chen2009}. Nevertheless, when applied to solve the linear elastic eigenvalue problem, many finite element schemes, such as the standard comforming FEM, suffer from the “locking” phenomenon.

“Locking” refers to a phenomenon that, for the linear elasticity problem, the numerical approximations of finite element scheme are depend on the Poisson ratio $\nu$ (i.e. the Lam\'{e} constant $\lambda$). When $\nu$ approaches $\frac{1}{2}$ (i.e. $\lambda$ turns to $\infty$), the elastic material becomes nearly imcompressible. In this case, the numerical solutions of various finite element schemes do not converge \cite{Ainsworth2022,Babuska1992,Ciarlet2022,Bren1992}. But it is not difficult to aviod locking. One possible approach is to use mixed finite element method \cite{Lepe2022,Lepe2019, Meddahi2013}. The main drawback of this approach is the difficulty in ensuring the stability of the mixed element method. Another useful approach is the construction of nonconforming element or virtual element. In \cite{Bi2023}, Zhang et al. used Crouzeix-Raviart element to solve the linear elastic eigenvalue problem with pure displacement boundary. In \cite{Mora2020}, a vitual element was applied to deal with the linear elastic eigenvalue problem under mixed boundary condition. We refer interested readers to \cite{Hansbo2003,Zhang2024,Amigo2023} for more information. Nevertheless, this approach seems to be hard to construct high order or high dimension finite element space.

Not long ago, a new class of nonconforming finite element, called weak Galerkin method, is able to cope with aforementioned problems. The WG method, which was first introduced in \cite{Wang2013a}, features in the application of weak differential operators instead of classic differential operators. Additionally, WG method adopts discontinuous piecewise polynomials on polygonal finite element partitions, making it possible to be extended to high dimension cases and adopt polytopal meshes. So far, WG method has been employed to solve various kinds of PDEs \cite{Mu2013c,Zhu2014,Mu2015,Wang2014,Mu2015b,Lin2014,Chen2016c,Zhai2016}. In particular, WG method is a useful way to solve eigenvalue problems, since it can provide asymptotic lower bounds for the eigenvalues by the use of high order polynomial elements.

It is worth mentioning that, when the eigenvalues are real numbers, it is necessary to obtain both upper bounds and lower bounds of the eigenvalues to get accurate intervals which they belongs to\cite{Luo2012}. Due to the min-max principle \cite{MR1115240}, all conforming FEM can only provide upper bounds for eigenvalues. Compared to the upper bounds, however, the lower bounds of eigenvalues are harder to get. Through post-processing procedures or the construction of nonconforming FEM are the main ways to finish it, but both of them have some shortages \cite{Zhai2019}. As mentioned in the last of preceding paragragh, WG method can also provide lower bounds for eigenvalues, which is achieved by using a function $\gamma(h)$ in the stablization term. Furthermore, inspired by this technique, we prove the nonconforming CR element method is also capable of providing lower bounds for linear elastic eigenvalue problem with mixed boundary under some conditions, which is based on the finished work by Zhang et al. in \cite{Zhang2023}.

The rest of this paper is organized as follows. In Section 2 we state some notations and weak form of the linear elastic eigenvalue problem. Section 3 is devoted to an abstract framework for soving eigenvalue problems by weak Galerkin method. In Section 4, we shall use the framework to derive the error estimates and lower bound property of WG method. In Section 5, we show that the CR element method can also give lower bounds for eigenvalues under some conditions. Some numerical experiments of WG method and CR method are presented in Section 6.
	
	\section{Notations and weak form}In this section, we state some notations and introduce the elastic eigenvalue problem. Let $\Omega\subset\Real^2$ be a bounded domain with $\partial\Omega=\Gamma_D\cup\Gamma_N$, and $\bh^m(\Omega)$ be the Sobolev spaces. The notations $(\cdot, \cdot)_{m,D}$, $||\cdot||_{m,D}$ and $|\cdot|_{m,D}$ are used as inner-product, norms and seminorms on $\bh^m(D)$, if the region $D$ is an edge of some elements, we use
$\langle\cdot,\cdot\rangle _{m,D}$ instead of $(\cdot,
\cdot)_{m,D}$. For simplicity, We shall drop the subscript when $m=0$ or
$D=\Omega$. Define $\bh_E^1(\Omega)=\{\bv\in\bh^1(\Omega):\bv|_{\Gamma_D}=\b0\}$.

In this paper, we consider the following linear elastic eigenvalue problem: Find $\gamma\in\Real$ and $\bu\in\bh_E^1(\Omega)$ such that
\begin{equation}\label{eig}
	\left\{
	\begin{array}{rcl}
		-\nabla\cdot \sigma(\bu) &=& \gamma \bu,\quad \text{in}\quad\Omega,\\
		\bu &=& 0,\quad~~ \text{on}\quad\Gamma_D,\\
		\sigma(\bu)\bn &=& 0,\quad~~ \text{on}\quad\Gamma_N,\\
		\int_\Omega \bu^2d\Omega &=& 1,
	\end{array}
	\right.
\end{equation}
where $|\Gamma_D|>0$, $\bn$ is the unit outward normal vector of $\Gamma_N$. The stress tensor $\sigma(\bu)$ is given by
\begin{eqnarray*}
	\sigma(\bu)=2\mu\eps(\bu)+\lambda(\nabla\cdot\bu)\bi,
\end{eqnarray*}
where $\bi\in\Real^{2\times 2}$ is the identity matrix. The strain tensor $\eps(\bu)$ is defined as
\begin{eqnarray*}
	 \eps(\bu)=\frac{1}{2}(\nabla\bu+(\nabla\bu)^T).
\end{eqnarray*}
The Lam\'{e} parameters $\mu$ and $\lambda$ are given by
\begin{eqnarray*}
	\lambda=\frac{E\nu}{(1+\nu)(1-2\nu)}\ \ \ \ \ {\rm and}\ \ \ \ \ \mu=\frac{E}{2(1+\nu)},
\end{eqnarray*}
where $E$ denotes the Young's modulus and $\nu\in(0,0.5)$ is the Poisson ratio.

The variational form of problem (\ref{eig}) is: Find $\gamma\in\Real$ and $\bu\in H_E^1(\Omega)$ such that $b(\bu,\bu)=1$ and
\begin{eqnarray}\label{vartion1}
	a(\bu,\bv)=\gamma b(\bu,\bv),\quad\forall\bv\in H_E^1(\Omega),
\end{eqnarray}
where
\begin{eqnarray*}
	a(\bu,\bv) &=& 2\mu(\eps(\bu),\eps(\bv))+\lambda(\di\bu,\di\bv),\\
	b(\bu,\bv) &=& (\bu,\bv).
\end{eqnarray*}
It is well known that problem \eqref{vartion1} has the
eigenvalue sequence \cite{MR1115240}
\begin{eqnarray*}
0<\gamma_1\le\gamma_2\le\ldots\le\gamma_j\leq\ldots\longrightarrow +\infty
\end{eqnarray*}
with the corresponding eigenfunction sequence
\begin{eqnarray*}
	\bu_1,\bu_2,\ldots,\bu_j,\ldots,
\end{eqnarray*}
such that $b(\bu_i,\bu_j)=\delta_{ij}$, $i,j=1,2,\cdots$.

\begin{remark}
	From \eqref{eig} and \eqref{vartion1}, $\lambda||\di\bu||^2$ and $\lambda||\di\bu||_1^2$ are both bounded, we further assume the following assumption holds.
\end{remark}

\textbf{Assumption (A0)}\emph{
	For k $\geq$ 1, $\lambda||\text{div}\bu||_k^2$ are bounded.
}
	
	\section{Abstract framework}
	In this section, we introduce an framework for WG method to solve eigenvalue problem, more detailed information can be found in \cite{Zhai2019}. Suppose $(V,(\cdot,\cdot)_V)$ is a Hilbert space, $V_c$ and $V_h$ are two
	subspaces of $V$. Let $a(\cdot,\cdot)$ be a bilinear form on $V_c\times V_c$,
	$a_w(\cdot,\cdot)$ be a bilinear form on $V_h\times V_h$, and $b(\cdot,\cdot)$ be a bilinear form on $V\times V$. We consider the eigenvalue problems:
	
	Find $(\gamma,u)\in \Real\times V_c$ and $(\gamma_h,u_h)\in\Real\times V_h$ such
	that $b(u,u)=b(u_h,u_h)=1$ and
	\begin{eqnarray}\label{problem-eq1}
		a(u,v) &=& \gamma b(u,v),\quad\forall v\in V_c,
		\\ \label{problem-eq2}
		a_w(u_h,v_h) &=& \gamma_h b(u_h,v_h),\quad\forall v_h\in V_h.
	\end{eqnarray}
	We assume that the bilinear forms $a$, $a_w$ and $b$ have the following property:
	
	\textbf{Assumption (A1)}\emph{
		$a$, $a_w$, and $b$ are symmetric, and
		for any $v\in V_c$ and $v_h\in V_h$,
		\begin{align*}
			a(v,v) \ge& \gamma_c\|v\|_V^2,
			\\
			a_w(v_h,v_h) \ge& \gamma(h)\|v_h\|_V^2,
		\end{align*}
		where $\gamma_c$ and $\gamma(h)$ are positive constant and function, repectively.
	}
	
	The eigenvalue problems \eqref{problem-eq1} and \eqref{problem-eq2} can be viewed as
	operator spectrum problems. Define two operators $K:V_c\rightarrow V_c$ and $K_h: V_h
	\rightarrow V_h$ satisfying
	\begin{align}
		a(Kf, v) =& b(f,v),\quad\forall v\in V_c,\label{vartion}
		\\
		a_w(K_h f_h,v_h) =& b(f_h,v_h),\quad\forall v_h\in V_h.
	\end{align}
	Thanks to the Lax-Milgram theory, it is easy to check $K$ and $K_h$ are well-defined by Assumption (A1).
	%It is clear that $K$ and $K_h$ share the same eigensystem with
	%\eqref{problem-eq1}-\eqref{problem-eq2}.
	
	%Let $\mu_{1,h},\cdots,\mu_{m,h}$ be the eigenvalues of $K_h$ converge
	%to $\mu$, and $\Gamma$ be a circle in the complex plane
	%encloses $\mu_{1,h},\cdots,\mu_{m,h}$ and no other
	%points of $\sigma(K_h)$. The corresponding spectral projection is
	%\begin{eqnarray*}
	%E_{\mu,h}(K_h)=\frac{1}{2\pi {\rm i}}\int_\Gamma R_z(K_h) dz.
	%\end{eqnarray*}
	%$R(E_{\mu,h}(K_h))$ represents the range of $E_{\mu,h}(K_h)$, which is called the discrete eigenspace associated with the eigenvalue $\mu$.
	
	%Suppose $\mu$ is a nonzero eigenvalue of $K$ and $u$ is an eigenfunction corresponding to $\mu$, then we have
	%\begin{eqnarray*}
	%	K u =\mu u,
	%\end{eqnarray*}
	%i.e.
	%\begin{eqnarray*}
	%	a(u,v) = \frac{1}{\mu}a(Ku, v) = \frac1\mu b(u,v),\quad\forall v\in V_c.
	%\end{eqnarray*}
	%Hence, $(\frac1\mu,u)$ is a solution of \eqref{problem-eq1}. Similarly, if $(\gamma, u)$ is solution of \eqref{problem-eq1},
	%then $(\frac1\gamma,u)$ is an eigenpair of $K$. Thus, solving eigenvalue problems \eqref{problem-eq1}-\eqref{problem-eq2}
	%is equivalent to finding the non-zero spectrum and corresponding eigenfunctions of $K$ and $K_h$.
	
	We denote by $\sigma(K)$ the spectrum of $K$, and by $\rho(K)$ the
	resolvent set. $R_z(K)=(zI-K)^{-1}$ represents the resolvent
	operator for any $z\in\rho(K)$. Let $\mu$ be a nonzero eigenvalue of $K$ with algebraic
	or geometric
	multiplicities $m$. Let $\Gamma_\mu$ be a circle in the complex plane
	centered at $\mu$ which lies in $\rho(K)$ and encloses no other
	points of $\sigma(K)$. The corresponding spectral projection is
	\begin{eqnarray*}
		E_\mu(K)=\frac{1}{2\pi {\rm i}}\int_{\Gamma_\mu} R_z(K) dz.
	\end{eqnarray*}
	It is known that the range $R(E_\mu(K))$ of $E_\mu(K)$ is the eigenspace corresponding to the eigenvalue $\mu$. Then we make the following assumption.

	\textbf{Assumption (A2)}\emph{ $K$ and $K_h$ are compact.}
	
	\begin{comment}
	Under this assumption, we have the following lemma.
	\begin{lemma}
		Under Assumptions (A1)-(A2), $K\Pi_c$ and $K_h\Pi_h$ are self-adjoint and compact.
	\end{lemma}
	\begin{proof}
		For any $u,v\in V$, it follows from Assumption (A1) that
		\begin{eqnarray*}
			(K\Pi_cu,v)_V = (K\Pi_cu,\Pi_cv)_V =b(\Pi_cu,\Pi_cv)= (\Pi_c u,K\Pi_c v)_V = (u,K\Pi_cv)_V,
			\\
			(K\Pi_hu,v)_V = (K\Pi_hu,\Pi_hv)_V =b(\Pi_hu,\Pi_hv)= (\Pi_h u,K\Pi_h v)_V = (u,K\Pi_hv)_V.
		\end{eqnarray*}
		Thus, $K\Pi_c$ and $K\Pi_h$ are self-adjoint.
		
		Since $\Pi_c$ and $\Pi_h$ are bounded and $K$, and $K_h$ are compact, we have $K\Pi_c$ and $K_h\Pi_h$
		are compact, which completes the proof.
	\end{proof}
	
	In order to estimate the error of eigenvalues and eigenfunctions in \eqref{problem-eq1}-\eqref{problem-eq2},
	we need to estimate the approximation error $\|K\Pi_c-K_h\Pi_h\|_V$. However, the projection
	operator $\Pi_h$ is usually not explicitly defined in the numerical scheme. Hence, for the convenience
	of the analysis in the rest of the paper, we estimate $\|K\Pi_c-K_h\Pi_h\|_V$
	in an indirect way. Suppose the following assumption holds.
	\end{comment}
	
	\textbf{Assumption (A3)}\emph{ There exists a bounded linear operator $Q_h:V\rightarrow V_h$ satisfying
		\begin{align*}
			Q_h v_h =& v_h,\quad\forall v_h\in V_h,
			\\
			b(Q_h w,v_h) =& b(w,v_h),\quad\forall w\in V,v_h\in V_h.
		\end{align*}
	}

	\textbf{Assumption (A4)}\emph{ For any $\mu\in\sigma(K)$, there holds
		\begin{eqnarray*}
			e_{h,\mu}\rightarrow 0\ and \ \delta_{h,\mu}\gamma(h)^{-1}\rightarrow 0\ as \ h\rightarrow 0,
		\end{eqnarray*}
		where $e_{h,\mu} = \|(K-K_hQ_h)|_{R(E_\mu(K))}\|_V$, $\delta_{h,\mu} = \sup\limits_{\substack{u\in R(E_\mu(K))\\ \|u\|_V=1}}\|u-Q_h u\|_V$.
	}
	
	\begin{comment}
	From Lemma \ref{lemma-PicPih}, we have the following estimate immediately.
	\begin{lemma}
		Under Assumptions (A1)-(A4), for any $\mu\in\sigma(K)$ we have
		\begin{equation*}
			\|(K\Pi_c-K_h\Pi_h)|_{R(E_\mu(K))}\|_V\rightarrow 0 \text{ as } h\rightarrow 0.
		\end{equation*}
	\end{lemma}
	\end{comment}

	Let $X$ and $Y$ are two subspaces of a Banach space $V$, the distance is defined by
	\begin{eqnarray}
		\rho_V(X,Y)=\sup_{\substack{x\in X\\\|x\|_V=1}} \rho(x,Y),~
		\hat \rho_V(X,Y)=\max\{\rho_V(X,Y),\rho_V(Y,X)\}.
	\end{eqnarray}
	
	Under Assumptions (A1)-(A4), we have the following result according to Theorem 7.1 in \cite{MR1115240}.
	\begin{theorem}\label{thm-vnorm}
		 For any $\mu\in\sigma(K)$, there exist a constant $C$ only depends on $K$ such that
		\begin{equation*}
			\hat \rho_V(R(E_\mu(K)), R(E_{\mu,h}(K_h)) \le C(e_{h,\mu}+\delta_h\gamma(h)^{-1}).
		\end{equation*}
	\end{theorem}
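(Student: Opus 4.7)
The plan is to reduce the theorem to Theorem 7.1 of \cite{MR1115240}, which is a classical gap estimate for spectra of self-adjoint compact operators acting on a common Hilbert space. To put $K$ and $K_h$ on the same footing I would lift them to operators on $V$ using the $(\cdot,\cdot)_V$-orthogonal projections $\Pi_c:V\to V_c$ and $\Pi_h:V\to V_h$. Symmetry of $a,a_w,b$ in Assumption (A1) makes $K\Pi_c$ and $K_h\Pi_h$ self-adjoint, Assumption (A2) combined with boundedness of the projections gives compactness, and an elementary argument using $\mu\neq 0$ shows that $\sigma(K\Pi_c)\setminus\{0\}=\sigma(K)$ with $R(E_\mu(K\Pi_c))=R(E_\mu(K))$, and analogously for $K_h\Pi_h$. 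This reduces the statement to estimating $\|(K\Pi_c-K_h\Pi_h)|_{R(E_\mu(K))}\|_V$.

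The key decomposition is, for $f\in R(E_\mu(K))\subset V_c$,
\[
K\Pi_c f - K_h\Pi_h f = (K - K_h Q_h)f + K_h(Q_h - \Pi_h)f,
\]
so by the triangle inequality the restriction norm is bounded by $e_{h,\mu}$ plus $\|K_h(Q_h-\Pi_h)|_{R(E_\mu(K))}\|_V$. The first term is exactly $e_{h,\mu}$ by its definition in Assumption (A4). For the second term, set $g_h = K_h(Q_h-\Pi_h)f \in V_h$, test the defining identity $a_w(K_h w,v_h)=b(w,v_h)$ with $w=(Q_h-\Pi_h)f$ and $v_h = g_h$, and use the $b$-invariance from Assumption (A3) to replace $Q_h f$ by $f$ inside $b$. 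The weak coercivity from Assumption (A1) yields
\[
\gamma(h)\|g_h\|_V^2 \le a_w(g_h,g_h) = b(f - \Pi_h f, g_h) \le \|f-\Pi_h f\|_V\,\|g_h\|_V,
\]
and since $f-\Pi_h f$ is the $(\cdot,\cdot)_V$-orthogonal error we have $\|f-\Pi_h f\|_V \le \|f-Q_h f\|_V \le \delta_{h,\mu}\|f\|_V$. Dividing out yields $\|g_h\|_V \le \delta_{h,\mu}\gamma(h)^{-1}$, and combining the two pieces gives the crucial bound
\[
\|(K\Pi_c - K_h\Pi_h)|_{R(E_\mu(K))}\|_V \le e_{h,\mu} + \delta_{h,\mu}\gamma(h)^{-1},
\]
which tends to $0$ by Assumption (A4).

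Feeding this estimate into Theorem 7.1 of \cite{MR1115240} produces the claimed bound on $\hat\rho_V\bigl(R(E_\mu(K)),R(E_{\mu,h}(K_h))\bigr)$, with the constant $C$ depending only on $K$ through the resolvent $R_z(K)$ along a small circle $\Gamma_\mu$ separating $\mu$ from the rest of $\sigma(K)$; a standard perturbation argument then guarantees that for $h$ small enough $\Gamma_\mu$ also lies in $\rho(K_h\Pi_h)$, so that the discrete spectral projection $E_{\mu,h}(K_h)$ is well defined. The main technical obstacle is the estimate on $K_h(Q_h-\Pi_h)$: because only the possibly degenerate stability constant $\gamma(h)$ is available at the discrete level, this term contributes the factor $\gamma(h)^{-1}$, and the proof must carefully exploit (A3) to rewrite $b(Q_h f,v_h)=b(f,v_h)$ so that the error $f-\Pi_h f$ can be bounded by $\delta_{h,\mu}$; after this, the passage to the gap estimate is a direct citation of the classical spectral perturbation theory.
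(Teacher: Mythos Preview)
Your proposal is correct and follows essentially the same route as the paper: lift $K$ and $K_h$ to $V$ via the $(\cdot,\cdot)_V$-projections $\Pi_c,\Pi_h$, verify self-adjointness and compactness from (A1)--(A2), identify the nonzero spectra and eigenspaces, then split $(K\Pi_c-K_h\Pi_h)f=(K-K_hQ_h)f+K_h(Q_h-\Pi_h)f$ and control the second piece by testing the defining equation of $K_h$ with $v_h=K_h(Q_h-\Pi_h)f$, using (A3) to replace $b(Q_hf,v_h)$ by $b(f,v_h)$ and the weak coercivity $\gamma(h)$ from (A1). This is exactly the argument the paper outlines (the intermediate lemmas are in fact present in commented-out form in the source), after which the conclusion is quoted from Theorem~7.1 of \cite{MR1115240}.
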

	
	Suppose $X$ is a Hilbert space equipped with inner-product $b(\cdot,\cdot)$, and $\|\cdot\|_X$ is the
	corresponding norm. In the elliptic eigenvalue problems, $X$ is $L^2(\Omega)$. Suppose $\Pi_0$ is
	the orthogonal projection from $V$ onto $X$ under $b(\cdot,\cdot)$.
	%Denote $V_{c,X} = V_c\cap X$ and $V_{h,X} = V_h\cap X$.
	Then
	$\Pi_0K|_{\Pi_0V_c}: \Pi_0V_c\rightarrow \Pi_0V_c$ and $\Pi_0K_h|_{\Pi_0V_h}: \Pi_0V_h\rightarrow \Pi_0V_h$ are bounded linear operators.

	\textbf{Assumption (A5)}\emph{ For any $\mu\in\sigma(K)$, there holds
		\begin{eqnarray*}
			e'_{h,\mu}\rightarrow 0\ and \ \delta'_h\gamma(h)^{-1}\rightarrow 0 \ as \ h\rightarrow 0,
		\end{eqnarray*}
		where $e'_{h,\mu} = \|(\Pi_0 K-\Pi_0K_hQ_h)|_{R(E_\mu(K))}\|_X$ and $\delta'_h = \rho_X(V,V_h)$.}
	
	%{\color{blue}
		%Similar to Lemma \ref{lemma-PicPih}, we have the following lemma for $X$ space.
		%\begin{lemma}
		%  Under Assumption (A1) and (A3), for any $\mu\in\sigma(K)$ we have
		%  \begin{equation*}
			%    \|(\Pi_0 K\Pi_c-\Pi_0 K_h\Pi_h)|_{ R(E_\mu(K))}\|_X\le e'_{h,\mu}+\delta'_{h,\mu}\gamma(h)^{-1}.
			%  \end{equation*}
		%\end{lemma}
		%\begin{proof}
		%  For any $f\in R(E_\mu(K))$, it follows from the definition of $K_h$ that
		%  \begin{eqnarray}
			%    a_w(K_h(Q_h-\Pi_h)f,v_h) = b((Q_h-\Pi_h)f,v_h),\quad\forall v_h\in V_h.
			%  \end{eqnarray}
		%  By taking $v_h=K_h(Q_h-\Pi_h)f$ and using Assumption (A1) we have
		%  \begin{eqnarray*}
			%    &&\gamma(h)\|\Pi_0 K_h(Q_h-\Pi_h)f\|_X^2
			%    \\
			%    &=& \gamma(h)b(K_h(Q_h-\Pi_h)f,K_h(Q_h-\Pi_h)f)
			%    \\
			%    &\le& \gamma(h)\|K_h(Q_h-\Pi_h)f\|_V^2
			%    \\
			%    &\le& a_w(K_h(Q_h-\Pi_h)f,K_h(Q_h-\Pi_h)f)
			%    \\
			%    &=& b((Q_h-\Pi_h)f,K_h(Q_h-\Pi_h)f)
			%    \\
			%    &=& b(f-\Pi_hf,K_h(Q_h-\Pi_h)f)
			%    \\
			%    &\le& \|f-\Pi_hf\|_V\|K_h(Q_h-\Pi_h)f\|_V
			%    \\
			%    &\le& \delta_{h,\mu}\|K_h(Q_h-\Pi_h)f\|_V,
			%  \end{eqnarray*}
		%  which implies that
		%  \begin{equation*}
			%    \|K_h(Q_h-\Pi_h)f\|_V\le \delta_{h,\mu}\gamma(h)^{-1}.
			%  \end{equation*}
		%  From the triangle inequality we have
		%  \begin{eqnarray*}
			%     &&\|(K\Pi_c-K_h\Pi_h)|_{R(E_\mu(K))}\|_V
			%     \\
			%     &\le& \|(K-K_hQ_h)|_{R(E_\mu(K))}\|_V + \|(K_hQ_h-K_h\Pi_h)|_{R(E_\mu(K))}\|_V
			%     \\
			%     &\le& e_{h,\mu}+\delta_{h,\mu}\gamma(h)^{-1},
			%  \end{eqnarray*}
		%  which completes the proof.
		%\end{proof}
		%}

	By the Assumption (A1)-(A5), we can derive the following estimate.
	\begin{theorem}\label{thm-xnorm}
		For any $\mu\in\sigma(K)$ there exist a constant $C$ depend on $\mu$ such that
		\begin{equation*}
			\hat \rho_X(R(E_\mu(K)), R(E_{\mu,h}(K_h)) \le C(e'_{h,\mu}+\delta'_h\gamma(h)^{-1}).
		\end{equation*}
	\end{theorem}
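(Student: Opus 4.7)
The plan is to mirror the structure of Theorem~\ref{thm-vnorm} but work on the Hilbert space $(X,b(\cdot,\cdot))$ instead of $(V,(\cdot,\cdot)_V)$. First I would verify the spectral identification: by the same projection arguments used in the $V$-norm setting (essentially the commented-out lemma giving $\sigma(\Pi_0 K)=\sigma(K)$ and $R(E_\mu(\Pi_0 K))=\Pi_0 R(E_\mu(K))$, and similarly for $K_h$), the distance $\hat\rho_X(R(E_\mu(K)),R(E_{\mu,h}(K_h)))$ equals the corresponding distance between the spectral invariant subspaces of $\Pi_0 K$ and $\Pi_0 K_h$ on $X$. This reduces the theorem to a spectral approximation statement for the compact self-adjoint operators $\Pi_0K$ and $\Pi_0K_h$ acting on $X$.

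Then I would apply the Babuska--Osborn spectral approximation theorem (Theorem~7.1 of \cite{MR1115240}, this time on $X$) to obtain
\[
\hat\rho_X\bigl(R(E_\mu(\Pi_0 K)),R(E_{\mu,h}(\Pi_0 K_h))\bigr)\le C\bigl\|(\Pi_0 K-\Pi_0 K_h\Pi_h)|_{R(E_\mu(K))}\bigr\|_X,
\]
with $C$ controlled by the resolvent of $\Pi_0K$ on the contour $\Gamma_\mu$ (hence depending on $\mu$). I would then split
\[
\Pi_0 K-\Pi_0 K_h\Pi_h=(\Pi_0 K-\Pi_0 K_h Q_h)+\Pi_0 K_h(Q_h-\Pi_h),
\]
so that the first summand is exactly $e'_{h,\mu}$, and the second needs to be bounded by $C\delta'_h\gamma(h)^{-1}$.

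For the second summand I would mimic the proof of the (commented-out) $V$-norm lemma. From Assumption~(A3), $a_w(K_h(Q_h-\Pi_h)f,v_h)=b((Q_h-\Pi_h)f,v_h)=b(f-\Pi_hf,v_h)$ for all $v_h\in V_h$; testing with $v_h=K_h(Q_h-\Pi_h)f$ and using the coercivity in Assumption~(A1) gives
\[
\gamma(h)\|K_h(Q_h-\Pi_h)f\|_V^{2}\le\|f-\Pi_hf\|_X\,\|K_h(Q_h-\Pi_h)f\|_X.
\]
Combined with the definition $\delta'_h=\rho_X(V,V_h)$ and the boundedness of $\Pi_0$ in the $X$-norm, this is intended to yield the required $\delta'_h\gamma(h)^{-1}$ bound in $X$-norm for $\Pi_0 K_h(Q_h-\Pi_h)f$.

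The main obstacle is precisely the mismatch between the $V$-norm that is produced on the left-hand side by Assumption~(A1) and the weaker $X$-norm in which we want to measure the error. Closing this gap will require an Aubin--Nitsche style duality argument: I would pair $\Pi_0 K_h(Q_h-\Pi_h)f$ against an arbitrary $g\in X$ and test with the solution of the dual discrete problem (again via $K_h$), using the compactness in Assumption~(A2) to ensure that the dual state can be approximated in $V_h$ with order $\delta'_h$ in the $X$-norm. After this duality step the triangle inequality combines the two pieces into $C(e'_{h,\mu}+\delta'_h\gamma(h)^{-1})$, and plugging back into the spectral approximation bound finishes the proof.
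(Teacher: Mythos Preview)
The paper does not supply its own proof here; the theorem is stated as a consequence of Assumptions (A1)--(A5), with the argument deferred to the framework of \cite{Zhai2019}. Your overall architecture---pass to $\Pi_0K$ and $\Pi_0K_h$ on $X$, invoke Theorem~7.1 of \cite{MR1115240} there, and split off $e'_{h,\mu}$ via $Q_h$---is the expected route and parallels the $V$-norm case.

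The gap you flag, however, is genuine, and your duality patch does not close it. Two issues. First, in the $X$-setting the projection that Babu\v{s}ka--Osborn produces is the $b$-orthogonal projection onto $\Pi_0V_h$, not the $V$-orthogonal projection $\Pi_h$ you carry over from the $V$-norm lemma; with the correct projection the residual is controlled by $\delta'_h=\rho_X(V,V_h)$ essentially by definition, whereas $\|f-\Pi_hf\|_X$ has no such bound. Second, your Aubin--Nitsche sketch invokes ``compactness in Assumption~(A2) to ensure that the dual state can be approximated with order $\delta'_h$'', but compactness yields no quantitative rate, and at the abstract level there is no dual regularity statement to call on. The clean way through is simply to use that $b(\cdot,\cdot)$ is bounded on $V$ (implicit in the well-posedness of $K$ and $K_h$; in the concrete application it is the Poincar\'e--Korn inequality $\|v_0\|\lesssim\|v\|_V$): then your coercivity line gives $\gamma(h)\|K_hg\|_V^2\le C\|g\|_X\|K_hg\|_V$, hence $\|\Pi_0K_hg\|_X\le C\|K_hg\|_V\le C\gamma(h)^{-1}\|g\|_X$, and no duality argument is needed.
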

	
	Next we turn to the estimation of the eigenvalues.
	\begin{comment}
	The proof is based on an expansion formula, the idea is similar
	to Lemma 2.2 in \cite{AD04}.
	\begin{lemma}\label{lemma-expansion}
		Under Assumption (A1), suppose $(\gamma,u)$ is the solution of \eqref{problem-eq1} and $(\gamma_h,u_h)$ is the
		solution of \eqref{problem-eq2}. Then for any $v_h\in V_h$ we have the following expansion
		\begin{eqnarray*}
			\gamma-\gamma_h &=& a(u,u) - a_w(v_h,v_h) + a_w(u_h-v_h,u_h-v_h)
			\\
			&&- \gamma_h b(u_h-v_h,u_h-v_h) -\gamma_h (b(u,u)-b(v_h,v_h)).
		\end{eqnarray*}
	\end{lemma}
	\begin{proof}
		From \eqref{problem-eq1}-\eqref{problem-eq2}, for any $v_h\in V_h$ we have
		\begin{eqnarray*}
			&&\gamma_h b(u_h-v_h,u_h-v_h) +\gamma_h (b(u,u)-b(v_h,v_h))
			\\
			&=& \gamma_h b(u_h-v_h,u_h-v_h) +\gamma_h (b(u_h,u_h)-b(v_h,v_h))
			\\
			&=& 2\gamma_h b(u_h,u_h) - 2\gamma_h b(u_h,v_h)
			\\
			&=& 2\gamma_h b(u_h,u_h) - 2\gamma_h a_w(u_h,v_h).
		\end{eqnarray*}
		Then, we obtain
		\begin{eqnarray*}
			\gamma - \gamma_h &=& \gamma b(u,u) + \gamma b(u_h,u_h) - 2\gamma_h b(u_h,u_h)
			\\
			&=& a(u,u) + a_w(u_h,u_h) - 2\gamma_h a_w(u_h,v_h)
			\\
			&&- \gamma_h b(u_h-v_h,u_h-v_h) -\gamma_h (b(u,u)-b(v_h,v_h))
			\\
			&=& a(u,u) - a_w(v_h,v_h) + a_w(u_h-v_h,u_h-v_h)
			\\
			&&- \gamma_h b(u_h-v_h,u_h-v_h) -\gamma_h (b(u,u)-b(v_h,v_h)),
		\end{eqnarray*}
		which completes the proof.
	\end{proof}
	
	By substituting $v_h = Q_h u$ in Lemma \ref{lemma-expansion}, we have the following lemma immediately.
	\end{comment}
	\begin{lemma}\label{lemma-expansionQhu}
		Under Assumption (A1) and (A3), suppose $(\gamma,u)$ is the solution of \eqref{problem-eq1} and $(\gamma_h,u_h)$ is the
		solution of \eqref{problem-eq2}. Then for any $v_h\in V_h$ we have the following expansion
		\begin{align*}
			\gamma-\gamma_h &= a(u,u) - a_w(Q_hu,Q_hu) + a_w(u_h-Q_hu,u_h-Q_hu)
			\\
			&\quad-\gamma_h b(u-u_h,u-u_h).
		\end{align*}
	\end{lemma}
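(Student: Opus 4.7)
The plan is to derive the expansion by direct algebraic manipulation, exploiting the normalizations $b(u,u) = b(u_h,u_h) = 1$, the two variational identities \eqref{problem-eq1} and \eqref{problem-eq2}, and the projection property in Assumption (A3). The guiding observation is that every term on the right-hand side can be rewritten in terms of $\gamma$, $\gamma_h$, and $b(u,u_h)$, and these pieces will telescope down to $\gamma - \gamma_h$.

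First, I would take $v = u$ in \eqref{problem-eq1} and $v_h = u_h$ in \eqref{problem-eq2} to obtain $a(u,u) = \gamma$ and $a_w(u_h,u_h) = \gamma_h$. Next, I would expand the symmetric quadratic form
$$a_w(u_h - Q_h u, u_h - Q_h u) = a_w(u_h,u_h) - 2 a_w(u_h, Q_h u) + a_w(Q_h u, Q_h u),$$
and handle the cross term by testing \eqref{problem-eq2} with $v_h = Q_h u \in V_h$, which gives $a_w(u_h, Q_h u) = \gamma_h b(u_h, Q_h u)$. At this step, Assumption (A3) (with $w = u$ and $v_h = u_h$) yields $b(u_h, Q_h u) = b(u_h, u)$, so that $a_w(u_h, Q_h u) = \gamma_h b(u, u_h)$.

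Finally, I would expand
$$b(u - u_h, u - u_h) = b(u,u) - 2 b(u,u_h) + b(u_h,u_h) = 2 - 2 b(u,u_h),$$
and substitute all of the above into the right-hand side of the asserted identity. The $a_w(Q_h u, Q_h u)$ contributions cancel between the second and third terms, the $b(u,u_h)$ contributions cancel between the cross term and the final term, and what remains is $\gamma + \gamma_h - 2\gamma_h = \gamma - \gamma_h$, which is the claim.

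The proof is essentially a bookkeeping exercise in the spirit of the standard eigenvalue-expansion formula. The only non-mechanical step is recognizing that one must invoke Assumption (A3) to convert $b(u_h, Q_h u)$ into $b(u_h, u)$; without this, the cross term and the $-\gamma_h b(u-u_h,u-u_h)$ term do not combine correctly. Everything else is routine symmetric bilinear algebra.
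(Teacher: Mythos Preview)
Your proof is correct and uses essentially the same ingredients as the paper: the normalizations, the eigenvalue equations, symmetry of the bilinear forms, and Assumption (A3) to pass from $b(u_h,Q_hu)$ to $b(u_h,u)$. The paper's version routes through an intermediate general expansion valid for arbitrary $v_h\in V_h$ (then specializes to $v_h=Q_hu$ and uses (A3) to collapse the two $b$-terms into $b(u-u_h,u-u_h)$), whereas you compute the right-hand side directly; the algebra is the same, just organized more compactly.
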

	\begin{comment}
	\begin{proof}
		It follows from the properties of $Q_hu$ in (A3) that
		\begin{eqnarray*}
			b(u,u)-b(Q_hu,Q_hu) &=& b(u+Q_hu,u-Q_hu)
			\\
			&=& b(u-Q_hu,u-Q_hu).
		\end{eqnarray*}
		Then by taking $v_h = Q_h u$ in Lemma \ref{lemma-expansion} we have
		\begin{eqnarray*}
			\gamma-\gamma_h &=& a(u,u) - a_w(Q_hu,Q_hu) + a_w(u_h-Q_hu,u_h-Q_hu)
			\\
			&&- \gamma_h b(u_h-Q_hu,u_h-Q_hu) -\gamma_h (b(u,u)-b(Q_hu,Q_hu))
			\\
			&=& a(u,u) - a_w(Q_hu,Q_hu) + a_w(u_h-Q_hu,u_h-Q_hu)- \gamma_h b(u_h-Q_hu,u_h-Q_hu)
			\\
			&& -\gamma_h b(u-Q_hu,u-Q_hu) -2\gamma_h b(u-Q_hu,u_h-Q_hu)
			\\
			&=& a(u,u) - a_w(Q_hu,Q_hu) + a_w(u_h-Q_hu,u_h-Q_hu)
			\\
			&&- \gamma_h b(u-u_h,u-u_h).
		\end{eqnarray*}
		Thus the proof is completed.
	\end{proof}
	\end{comment}
	
	%Denote $\varepsilon_{h,u} = a(u,u) - a_w(Q_hu,Q_hu)$. Suppose the following assumption holds.
	
	\textbf{Assumption (A6)}\emph{ For any $\mu\in \sigma(K)$ and $u\in R(E_\mu(K))$, there holds
		\begin{eqnarray*}
			\varepsilon_{h,u}\rightarrow 0\ as \ h\rightarrow 0,
		\end{eqnarray*}
		where $\varepsilon_{h,u} = a(u,u) - a_w(Q_hu,Q_hu)$.}
	
	With the assumptions above we are able to demonstrate the error estimate for eigenvalues.
	\begin{theorem}\label{thm-eigenvalue}
		Suppose $\gamma$ is an eigenvalue of \eqref{problem-eq1}, and $\{u_j\}_{j=1}^m$ are the corresponding eigenfunctions. Then when $h$ is  small enough there exists $m$ eigenvalues of
		\eqref{problem-eq2} $\{\gamma_{h,j}\}_{j=1}^m$ such that
		\begin{align*}
			|\gamma-\gamma_{h,j}|\le& C(\varepsilon_{h,u_j} + e_{h,\gamma^{-1}}^2 + e^{\prime 2}_{h,\gamma^{-1}}+ \delta_h^2\gamma(h)^{-2} + \delta^{\prime 2}_h\gamma(h)^{-2}) .
		\end{align*}
	\end{theorem}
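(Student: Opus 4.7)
The plan is to combine the algebraic identity in Lemma \ref{lemma-expansionQhu} with the spectral approximation results in Theorems \ref{thm-vnorm} and \ref{thm-xnorm}. First I would invoke Assumption (A4) together with standard spectral perturbation theory of the Babu\v{s}ka--Osborn type to assert that, for $h$ small enough, $K_h$ has exactly $m$ eigenvalues (counted with multiplicity) $\mu_{h,1},\ldots,\mu_{h,m}$ clustered around $\mu=\gamma^{-1}$; taking $\gamma_{h,j}=\mu_{h,j}^{-1}$ yields the desired $m$ discrete eigenvalues close to $\gamma$, and these are uniformly bounded away from $0$ and $\infty$ in $h$.

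Next, for each $j$ I would choose an eigenfunction $u_{h,j}\in R(E_{\mu,h}(K_h))$ approximating $u_j$. Since $R(E_\mu(K))$ is $m$-dimensional, a basis-matching argument (using e.g.\ $u_{h,j}=E_{\mu,h}(K_h)u_j$, possibly after rotation) allows me to fix $\{u_{h,j}\}_{j=1}^m$ so that simultaneously
\begin{align*}
\|u_j-u_{h,j}\|_V &\le C\bigl(e_{h,\gamma^{-1}}+\delta_h\gamma(h)^{-1}\bigr),\\
\|u_j-u_{h,j}\|_X &\le C\bigl(e'_{h,\gamma^{-1}}+\delta'_h\gamma(h)^{-1}\bigr),
\end{align*}
directly from Theorems \ref{thm-vnorm} and \ref{thm-xnorm}.

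With these in hand I apply Lemma \ref{lemma-expansionQhu} with $(u,u_h,\gamma_h)=(u_j,u_{h,j},\gamma_{h,j})$ and bound the three resulting pieces. The first piece is exactly $\varepsilon_{h,u_j}$ by the definition in Assumption (A6). The third piece is $\gamma_{h,j}\|u_j-u_{h,j}\|_X^2$, which is controlled by $C(e'^{\,2}_{h,\gamma^{-1}}+\delta'^{\,2}_h\gamma(h)^{-2})$ using the $X$-norm estimate above and the uniform boundedness of $\gamma_{h,j}$. For the middle piece, continuity of $a_w$ in $\|\cdot\|_V$, the triangle inequality, and $\|u_j-Q_hu_j\|_V\le\delta_h$ give
$$|a_w(u_{h,j}-Q_hu_j,u_{h,j}-Q_hu_j)|\le C\bigl(\|u_j-u_{h,j}\|_V^2+\|u_j-Q_hu_j\|_V^2\bigr)\le C\bigl(e_{h,\gamma^{-1}}^2+\delta_h^2\gamma(h)^{-2}\bigr),$$
where I absorb the stray $\delta_h^2$ into $\delta_h^2\gamma(h)^{-2}$ using $\gamma(h)\le 1$ in the regime of interest. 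Summing the three estimates yields the asserted bound.

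The step I expect to be the main obstacle is the middle piece: one must carefully combine the triangle inequality with the best-approximation gap estimate from Theorem \ref{thm-vnorm}, and verify that $a_w$ is continuous on $V_h\times V_h$ in the $V$-norm for the application at hand (this is automatic in the WG setting but is not part of Assumption (A1)). A secondary technical point is producing a single matched $u_{h,j}$ realizing both the $V$-norm and $X$-norm gap bounds simultaneously, which is routine once the discrete spectral projection $E_{\mu,h}(K_h)$ is fixed but does require an elementary finite-dimensional argument.
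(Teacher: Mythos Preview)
Your proposal is correct and follows essentially the same route as the paper: invoke Theorems \ref{thm-vnorm}--\ref{thm-xnorm} to obtain matched eigenfunctions with $V$- and $X$-norm error bounds, plug into the expansion of Lemma \ref{lemma-expansionQhu}, and bound the three terms using the triangle inequality $\|u_{h,j}-Q_hu_j\|_V\le\|u_j-u_{h,j}\|_V+\|u_j-Q_hu_j\|_V$. Your caveat about the continuity of $a_w$ in $\|\cdot\|_V$ (needed to pass from $a_w(u_{h,j}-Q_hu_j,u_{h,j}-Q_hu_j)$ to $C\|u_{h,j}-Q_hu_j\|_V^2$) is well taken---the paper uses this step without isolating it as an abstract assumption, relying on the fact that it holds in the concrete WG setting.
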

	\begin{comment}
	\begin{proof}
		Suppose $\{u_{h,j}\}_{j=1}^m$ are the eigenfunctions of
		\eqref{problem-eq2} corresponding to \\ $\{\gamma_{h,j}\}_{j=1}^m$.
		It follows from Theorem \ref{thm-vnorm}-\ref{thm-xnorm} that there exists $\{u_j\}_{j=1}^m$ a basis of the eigenspace $R(E_{\gamma^{-1}}(K))$ satisfying for $j=1,\cdots,m$
		\begin{eqnarray*}
			\|u_j-u_{h,j}\|_V &\le& C(e_{h,\gamma^{-1}} + \delta_h\gamma(h)^{-1}),
			\\
			\|u_j-u_{h,j}\|_X &\le& C(e'_{h,\gamma^{-1}} + \delta'_h\gamma(h)^{-1}).
		\end{eqnarray*}
		From Lemma \ref{lemma-expansionQhu} we have
		\begin{eqnarray*}
			|\gamma-\gamma_{h,j}| &\le& |a(u_j,u_j) - a_w(Q_hu_j,Q_hu_j)| + a_w(u_{h,j}-Q_hu_j,u_{h,j}-Q_hu_j)
			\\
			&& +\gamma_h b(u_j-u_{h,j},u_j-u_{h,j})
			\\
			&\le& C(|a(u_j,u_j) - a_w(Q_hu_j,Q_hu_j)| + \|u_{h,j}-Q_hu_j\|_V^2 + \|u_j-u_{h,j}\|_X^2)
			\\
			&\le& C(|a(u_j,u_j) - a_w(Q_hu_j,Q_hu_j)| + \|u_j-u_{h,j}\|_V^2 + \|u_j-Q_hu_j\|_V^2 + \|u_j-u_{h,j}\|_X^2),
		\end{eqnarray*}
		which implies that
		\begin{eqnarray*}
			|\gamma-\gamma_{h,j}| &\le& C(\varepsilon_{h,u_j} + e_{h,\gamma^{-1}}^2 + e^{\prime 2}_{h,\gamma^{-1}}+ \delta_h^2\gamma(h)^{-2} + \delta^{\prime 2}_h\gamma(h)^{-2}) .
		\end{eqnarray*}
		Hence the proof is completed.
	\end{proof}		
	\end{comment}
	
	%Finally, we close this section with a lower bound estimation.
	%To this end, we need the following assumption.
	
	\textbf{Assumption (A7)}\emph{ Suppose $(\gamma,u)$ and $(\gamma_h,u_h)$ is an eigenpair of \eqref{problem-eq1} and \eqref{problem-eq2}, respectively. There holds
		\begin{eqnarray*}
			\varepsilon_{h,u} \ge \gamma_h\|u-u_h\|_X^2.
		\end{eqnarray*}
	}
	
	\begin{theorem}\label{thm-lower}
		Suppose $(\gamma,u)$ and $(\gamma_h,u_h)$ is an eigenpair of \eqref{problem-eq1} and \eqref{problem-eq2}, respectively. Then if Assumption (A1), (A3) and (A7) hold, we have
		\begin{eqnarray*}
			\gamma \ge \gamma_h.
		\end{eqnarray*}
	\end{theorem}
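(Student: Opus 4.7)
The plan is to read off the conclusion directly from the expansion formula in Lemma~\ref{lemma-expansionQhu}, and then use Assumptions (A1) and (A7) to control the signs of the three terms on the right-hand side.

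First I would apply Lemma~\ref{lemma-expansionQhu} with $v_h = Q_h u$ (the lemma already has this substitution built in) to obtain
\begin{equation*}
\gamma - \gamma_h = \bigl[a(u,u) - a_w(Q_h u, Q_h u)\bigr] + a_w(u_h - Q_h u,\, u_h - Q_h u) - \gamma_h\, b(u - u_h, u - u_h).
\end{equation*}
By definition of $\varepsilon_{h,u}$ from Assumption (A6), the bracketed term is exactly $\varepsilon_{h,u}$. Note that this step only requires (A1) and (A3), which are precisely the hypotheses of the lemma, so it is legal under the assumptions of Theorem~\ref{thm-lower}.

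Next I would bound the remaining two terms. The middle term $a_w(u_h - Q_h u,\, u_h - Q_h u)$ is nonnegative, since by Assumption (A1) we have $a_w(v_h, v_h) \ge \gamma(h)\|v_h\|_V^2 \ge 0$ for every $v_h \in V_h$. For the last term, Assumption (A7) gives $\varepsilon_{h,u} \ge \gamma_h \|u - u_h\|_X^2$, and since $b(\cdot,\cdot)$ is the inner product on $X$ we have $\|u - u_h\|_X^2 = b(u - u_h, u - u_h)$. Substituting these two inequalities into the expansion yields
\begin{equation*}
\gamma - \gamma_h \ge \gamma_h\, b(u-u_h, u-u_h) + 0 - \gamma_h\, b(u-u_h, u-u_h) = 0,
\end{equation*}
which is the desired conclusion.

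There is essentially no hard step here: the argument is a bookkeeping exercise in which Assumption (A7) is engineered exactly to absorb the $-\gamma_h b(u-u_h, u-u_h)$ term that appears in the expansion. The real difficulty — deferred to later sections — will be to verify that (A7) actually holds for the WG and CR discretizations of the elastic eigenvalue problem; once (A7) is in hand, the lower bound property follows mechanically from Lemma~\ref{lemma-expansionQhu}.
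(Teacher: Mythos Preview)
Your proposal is correct and matches the paper's own proof essentially line for line: apply Lemma~\ref{lemma-expansionQhu}, drop the nonnegative term $a_w(u_h-Q_hu,u_h-Q_hu)$ by Assumption~(A1), and then invoke Assumption~(A7) to absorb the remaining $-\gamma_h b(u-u_h,u-u_h)$ into $\varepsilon_{h,u}$. Your added commentary that (A7) is engineered precisely for this cancellation is also the right perspective.
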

	
\begin{comment}
	\begin{proof}
		It follows from Lemma \ref{lemma-expansionQhu} that
		\begin{eqnarray*}
			\gamma-\gamma_h &=& a(u,u) - a_w(Q_hu,Q_hu) + a_w(u_h-Q_hu,u_h-Q_hu)
			\\
			&& -\gamma_h b(u-u_h,u-u_h)
			\\
			&\ge& \varepsilon_{h,u} - \gamma_h\|u-u_h\|_X^2
			\\
			&\ge& 0,
		\end{eqnarray*}
		which completes the proof.
	\end{proof}
\end{comment}

	\section{Application to linear elastic eigenvalue problems}
In this section, we apply the framework in Section 3 to solve problem \eqref{eig} by the weak Galerkin method.

We start by introducing some notations in the WG scheme.
Let $\T_h$ be a partition of the domain $\Omega$, and the elements
in $\T_h$ are polygons satisfying the regular assumptions specified
in \cite{Wang2014a}. Let $\E_h=\E_h^0\cup\E_h^b$ be the edges in $\T_h$, where
$\E_h^0$ and $\E_h^b$ denotes by the interior edges and boundary edges, respectively. For each element $T\in\T_h$, $h_T$ represents the diameter of $T$, and $h=\max\limits_{T\in\T_h} h_T$ denotes the mesh size. For simplicity, we use $a\lesssim b$ and $a\gtrsim b$ instead of $a\le Cb$ and $a\ge Cb$, respectively, where $C$ is a positive constant that is independent of $h$ and $\lambda$.

Now we introduce the WG scheme to solve problem (\ref{eig}).
For a given integer $k\ge 1$, define the WG finite element space
\begin{eqnarray*}
	V_h=\big\{\bv=\{\bv_0,\bv_b\}:\bv_0|_T\in \bP_k(T), \bv_b|_e\in \bP_k(e), \forall T\in\T_h, e\in\E_h,\text{ and } \bv_b=\b0 \text{ on }\Gamma_D\big\},
\end{eqnarray*}
where $\bP_k(T)=[P_k(T)]^2$ denotes the vectorial polynomail space on $T$ with degree no more than $k$, and
$\bP_k(e)=[P_k(e)]^2$ denotes the vectorial polynomail space on $e$ with degree no more than $k$.

Refer to \cite{Wang2013a}, the following trace inequality and inverse inequality hold true.
\begin{lemma}
	For any element  $T\in\T_h$, there holds the trace inequality
	\begin{eqnarray*}
		\|\bv\|_{\partial T}^2\lesssim h_T^{-1}\|\bv\|_T^2+h_T\|\nabla\bv\|_T^2,\quad\forall\bv\in\bh^1(T).
	\end{eqnarray*}
\end{lemma}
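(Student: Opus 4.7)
The plan is to prove the trace inequality by a scaling argument to a reference configuration, which is the standard route for this estimate and is essentially how it is obtained in \cite{Wang2013a}. The key observation is that the inequality is scale-homogeneous: the $h_T^{-1}$ and $h_T$ on the right-hand side match exactly the powers of $h_T$ that arise when one passes between a fixed reference element and an element of diameter $h_T$. Thus the whole task reduces to a dimensionless trace inequality on a bounded reference polygon, combined with a change of variables.

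First I would fix a reference point $x_T\in T$ (e.g.\ the barycenter, or the center of an inscribed ball whose existence follows from the shape-regularity hypothesis of \cite{Wang2014a}) and define the affine map $\hat{x}=(x-x_T)/h_T$. Let $\hat{T}=\{\hat{x}:x\in T\}$ and $\hat{\bv}(\hat{x})=\bv(x)$. By shape-regularity, $\hat{T}$ has diameter $1$ and contains/is contained in balls of radii bounded below and above by constants independent of $h$. On $\hat{T}$ the classical trace theorem $\bh^1(\hat{T})\hookrightarrow \bl^2(\partial\hat{T})$ yields
\begin{equation*}
\|\hat{\bv}\|_{\partial\hat{T}}^2\lesssim \|\hat{\bv}\|_{\hat{T}}^2+\|\hat{\nabla}\hat{\bv}\|_{\hat{T}}^2,
\end{equation*}
with a constant depending only on the shape-regularity parameters. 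Then the elementary scaling identities $\|\bv\|_T^2=h_T^2\|\hat{\bv}\|_{\hat{T}}^2$, $\|\nabla\bv\|_T^2=\|\hat{\nabla}\hat{\bv}\|_{\hat{T}}^2$, and $\|\bv\|_{\partial T}^2=h_T\|\hat{\bv}\|_{\partial\hat{T}}^2$ (all in two dimensions) give
\begin{equation*}
\|\bv\|_{\partial T}^2=h_T\|\hat{\bv}\|_{\partial\hat{T}}^2\lesssim h_T\|\hat{\bv}\|_{\hat{T}}^2+h_T\|\hat{\nabla}\hat{\bv}\|_{\hat{T}}^2=h_T^{-1}\|\bv\|_T^2+h_T\|\nabla\bv\|_T^2,
\end{equation*}
which is the claim.

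The step that needs care is the uniform trace constant on $\hat{T}$: unlike the simplicial case, the reference domain $\hat{T}$ depends on $T$ (one polygon per element), so one cannot simply invoke a single reference trace inequality. The fix, which is standard under the \cite{Wang2014a} regularity framework, is either to note that the family of rescaled polygons lies in a compact set of admissible shapes so the trace constant is bounded uniformly, or, more concretely, to decompose each $T$ into a uniformly bounded number of shape-regular sub-simplices sharing the chosen reference point, apply the classical trace inequality on each sub-simplex (using the part of $\partial T$ it inherits), and sum. Either realization of the argument gives the implicit constant independent of $h_T$ and of $T$, which is exactly what the statement requires.
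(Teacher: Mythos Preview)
Your argument is correct: the scaling identities in two dimensions are exactly as you wrote, and you have correctly flagged and handled the one nontrivial point, namely that for general polygonal elements the rescaled domain $\hat T$ varies with $T$, so the reference trace constant must be controlled uniformly via the shape-regularity assumptions (either by the compactness-of-shapes observation or by the sub-triangulation trick).

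There is nothing to compare against in the paper itself: the lemma is stated without proof and simply attributed to \cite{Wang2013a}. Your write-up is precisely the standard derivation behind that citation, so it is consistent with, and more explicit than, what the paper provides.
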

\begin{lemma}
	For any element $T\in\T_h$, there holds the inverse inequality
	\begin{eqnarray*}
		\|\nabla\bv\|_T\lesssim h_T^{-1}\|\bv\|_T,\quad\forall\bv\in\bP_k(T).
	\end{eqnarray*}
\end{lemma}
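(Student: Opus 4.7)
The statement is the classical polynomial inverse inequality on a shape-regular element, so the plan is the standard scaling argument from a reference element. I would argue as follows.

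First, I would fix a reference polygon $\hat T$ (for a polygonal partition satisfying the regularity assumptions of \cite{Wang2014a}, one works with a finite collection of reference shapes, or, more conveniently, one decomposes each $T$ into a shape-regular family of simplices and uses a simplex reference element $\hat T$). I would introduce an affine bijection $F_T : \hat T \to T$, $F_T(\hat\bx) = B_T \hat\bx + \bb_T$, with $B_T \in \Real^{2\times 2}$. The shape-regularity of $\T_h$ (elements have bounded chunkiness / uniformly bounded ratio $h_T / \rho_T$) gives the standard two-sided bounds $\|B_T\| \lesssim h_T$ and $\|B_T^{-1}\| \lesssim h_T^{-1}$, and $|\det B_T| \simeq h_T^d$ with $d=2$ here.

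Next, for $\bv \in \bP_k(T)$, set $\hat\bv(\hat\bx) = \bv(F_T(\hat\bx))$, which lies in $\bP_k(\hat T)$ (up to the reference polynomial space, preserved under affine pullback). On the finite-dimensional space $\bP_k(\hat T)$ all norms are equivalent, so there is a constant $\hat C_k$ depending only on $k$ and $\hat T$ with
\begin{equation*}
\|\hat\nabla \hat\bv\|_{\hat T} \le \hat C_k \|\hat\bv\|_{\hat T}, \quad \forall \hat\bv \in \bP_k(\hat T).
\end{equation*}
This is the one genuine ingredient; the rest is change of variables.

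Then I would push the estimate back to $T$ using the chain rule $\nabla \bv(\bx) = B_T^{-T} \hat\nabla \hat\bv(\hat\bx)$ together with $d\bx = |\det B_T|\, d\hat\bx$. On the left,
\begin{equation*}
\|\nabla \bv\|_T^2 \le \|B_T^{-T}\|^2 |\det B_T| \, \|\hat\nabla \hat\bv\|_{\hat T}^2 \lesssim h_T^{-2} h_T^{d} \, \|\hat\nabla\hat\bv\|_{\hat T}^2,
\end{equation*}
and on the right $\|\hat\bv\|_{\hat T}^2 = |\det B_T|^{-1} \|\bv\|_T^2 \simeq h_T^{-d} \|\bv\|_T^2$. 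Combining these with the reference-element bound yields $\|\nabla \bv\|_T \lesssim h_T^{-1} \|\bv\|_T$, with the implicit constant depending only on $k$, the space dimension, and the shape-regularity constant, but not on $h$ or $\lambda$.

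The only real subtlety, and thus the main obstacle to be careful about, is the reference-element step when $\T_h$ is a genuinely polygonal (not simplicial) partition as allowed by the WG framework in \cite{Wang2014a}: there is no single $\hat T$ to pull back to. The standard remedy, which I would invoke, is either to subdivide $T$ into a uniformly shape-regular family of triangles (the WG regularity assumptions guarantee such a sub-triangulation exists with constants independent of $T$) and apply the simplex argument on each piece, or to use the chunkiness-parameter version of the inverse inequality proved in Brenner--Scott style directly for polygons; either way the $h_T^{-1}$ scaling is preserved and the constant depends only on the regularity parameter of $\T_h$.
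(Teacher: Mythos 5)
Your proof is correct: the paper itself does not prove this lemma but simply cites the weak Galerkin literature (\cite{Wang2013a}, and the shape-regularity framework of \cite{Wang2014a}) for it, and your scaling argument --- norm equivalence on the finite-dimensional reference space $\bP_k(\hat T)$ plus the affine change of variables with $\|B_T^{-1}\|\lesssim h_T^{-1}$ and $|\det B_T|\simeq h_T^d$ --- is exactly the standard route those references take. You also correctly flag the only genuine subtlety, namely that general polygonal elements are not affine images of a single reference element, and your proposed remedies (a uniformly shape-regular sub-triangulation, or the chunkiness-parameter version of the inequality) are precisely how the cited works handle it, so nothing is missing.
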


For the aim of analysis, some projection operators are also employed in this paper. For each $T\in\T_h$, let $Q_0$
denotes the $L^2$ projection from $\bl^2(T)$ onto $\bP_k(T)$, 
$\dQ_h$
denotes the $L^2$ projection from $[L^2(T)]^{2\times2}$ onto $[P_{k-1}(T)]^{2\times 2}$,
and $\Q_h$ denotes the $L^2$ projection from $L^2(T)$ onto $P_{k-1}(T)$.
For each $e\in\E_h$, let $Q_b$ denotes the $L^2$ projection from $\bl^2(e)$ onto $\bP_k(e)$ for each $e\in\E_h$.
Combining $Q_0$ and $Q_b$ together, we define $Q_h=\{Q_0,Q_b\}$,
which is a projection onto $V_h$.

Refer to \cite{Mu2013c} and \cite{Wang2015a}, the following estimates hold true for the projection operators.
\begin{lemma}\label{projection}
	For any $\bv\in\bh^{k+1}(\Omega)$ and $\tau\in H^k(\Omega)$, $s=0,1$, we have
	\begin{align*}
		\sumT h^{2s}\|\bv-Q_0\bv\|_{s,T}^2&\lesssim h^{2(k+1)}\|\bw\|_{k+1}^2,\\
		\sumT h^{2s}\|\nabla\bv-\dQ_h\nabla\bv\|_{s,T}^2&\lesssim  h^{2k}\|\bw\|_{k+1}^2,\\
		\sumT h^{2s}\|\eps(\bv)-\dQ_h(\eps(\bv))\|_{s,T}^2&\lesssim  h^{2k}\|\bw\|_{k+1}^2,\\
		\sumT h^{2s}\|\tau-\Q_h\tau\|_{s,T}^2&\lesssim h^{2k}\|\tau\|_k^2.
	\end{align*}
\end{lemma}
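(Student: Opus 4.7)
The plan is to establish each of the four estimates element-by-element via a standard Bramble--Hilbert / polynomial approximation argument, and then sum over $T\in\T_h$. All four operators are $L^2$-orthogonal projections onto a polynomial space: $Q_0$ projects $\bl^2(T)$ onto $\bP_k(T)$, $\dQ_h$ projects $[L^2(T)]^{2\times 2}$ onto $[P_{k-1}(T)]^{2\times 2}$, and $\Q_h$ projects $L^2(T)$ onto $P_{k-1}(T)$. The common workhorse is the following local estimate: for the $L^2$ projection $P$ onto polynomials of total degree $r$ on $T$, one has
\begin{equation*}
\|w-Pw\|_{s,T}\lesssim h_T^{\,r+1-s}\|w\|_{r+1,T},\qquad s=0,1,
\end{equation*}
with a constant depending only on the shape-regularity constants of the partition.

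Granting this local estimate, the four inequalities are immediate. First I would apply it with $r=k$ and $w=\bv$ to obtain $\|\bv-Q_0\bv\|_{s,T}\lesssim h_T^{k+1-s}\|\bv\|_{k+1,T}$, so that $h^{2s}\|\bv-Q_0\bv\|_{s,T}^2\lesssim h_T^{2(k+1)}\|\bv\|_{k+1,T}^2$, and summation over $\T_h$ yields the first line. For the second line I would take $r=k-1$ and $w=\nabla\bv$, which gives $\|\nabla\bv-\dQ_h\nabla\bv\|_{s,T}\lesssim h_T^{k-s}\|\nabla\bv\|_{k,T}\lesssim h_T^{k-s}\|\bv\|_{k+1,T}$; multiplying by $h^{2s}$ and summing produces the $h^{2k}\|\bv\|_{k+1}^2$ bound. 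The third estimate is identical, with $\nabla\bv$ replaced by the symmetric gradient $\eps(\bv)=\tfrac12(\nabla\bv+(\nabla\bv)^T)$ and the trivial bound $\|\eps(\bv)\|_{k,T}\lesssim\|\bv\|_{k+1,T}$. The fourth is the same argument applied to the scalar $\tau$ with $r=k-1$.

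The one step that deserves real care is the local approximation estimate itself, because the partition $\T_h$ consists of general polygons rather than affine images of a single reference element, so the classical Bramble--Hilbert argument via pullback is not directly available. The required tool is the polynomial approximation theory for shape-regular polytopal meshes: under the regularity assumptions imposed in \cite{Wang2014a} (each element is star-shaped with respect to a ball of radius comparable to $h_T$, with uniformly bounded number of edges and edge-to-diameter ratio), one obtains a Bramble--Hilbert-type bound on each $T$ with a constant independent of $T$ and $h$. This is where the polygonal setting differs from the simplicial case, and it is the only nontrivial ingredient; once available, the four estimates follow by the routine scaling and summation just outlined, completing the proof.
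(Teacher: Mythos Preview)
Your argument is correct and is precisely the standard route; the paper itself does not give a proof but simply cites \cite{Mu2013c} and \cite{Wang2015a} for these projection estimates, so your sketch is essentially what those references supply. One small point worth making explicit: for the case $s=1$ the $L^2$ projection does not directly control the $H^1$ seminorm, so the local bound $\|w-Pw\|_{1,T}\lesssim h_T^{r-? }$ is obtained by inserting an averaged Taylor polynomial $q$ and using the inverse inequality on the polynomial $q-Pw$; since the paper records the inverse inequality on $\T_h$ as a standing lemma, this step is available and your outline goes through without change.
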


For each $\bv\in V_h+\bh^1(\Omega)$, we define its weak gradient
$\nabla_w\bv$ and weak strain tensor $\eps_w(\bv)$ as follows.
\begin{definition}
	$\nabla_w\bv|_T$ is the unique polynomial in $[P_{k-1}(T)]^{2\times 2}$ satisfying
	\begin{eqnarray}\label{defw1}
		(\nabla_w\bv,q)_T=-(\bv_0,\nabla\cdot q)_T+\langle \bv_b,q\bn
		\rangle_{\partial T},\quad\forall q\in [P_{k-1}(T)]^{2\times 2},
	\end{eqnarray}
	where $\bn$ denotes the outward unit normal vector and define
	\begin{eqnarray}\label{defw2}
		\eps_w(\bv)=\frac{1}{2}(\nabla_w\bv+(\nabla_w\bv)^T).
	\end{eqnarray}
\end{definition}

For each $\bv\in V_h+\bh^1(\Omega)$, we define its weak divergence
$\nabla_w\cdot\bv$ as follows.
\begin{definition}
	$\nabla_w\cdot v|_T$ is the unique polynomial in $P_{k-1}(T)$ satisfying
	\begin{eqnarray}\label{defw3}
		(\nabla_w\cdot\bv,\tau)_T=-(\bv_0,\nabla\tau)_T+\langle \bv_b\cdot\bn,\tau
		\rangle_{\partial T},\quad\forall\tau\in P_{k-1}(T),
	\end{eqnarray}
	where $\bn$ denotes the outward unit normal vector.
\end{definition}

The following commutative properties for the weak differential operator
plays an essential role in the analysis. The proof can be found in Lemma 6.1 \cite{Wang2016c}.
\begin{lemma}\label{exchange}
	For any $\bv\in\bh^1(\Omega)$, there holds that on each element $T\in\T_h$
	\begin{align}
		\nabla_w (Q_h\bv) =& \dQ_h\nabla\bv,\label{exchange1}\\
		\eps_w (Q_h\bv) =& \dQ_h(\eps(\bv)),\label{exchange2}\\
		\nabla_w\cdot (Q_h\bv) =& \Q_h(\nabla\cdot\bv).\label{exchange3}
	\end{align}
\end{lemma}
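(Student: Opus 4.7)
The plan is to verify each of the three commutation identities directly from the defining relations \eqref{defw1}--\eqref{defw3}, using the testing-function degree bookkeeping that makes the $L^2$ projections $Q_0$, $Q_b$ drop out, followed by integration by parts and the defining property of $\dQ_h$ or $\Q_h$ on the opposite side.

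For \eqref{exchange1}, I would take an arbitrary test tensor $q\in[P_{k-1}(T)]^{2\times 2}$ and apply \eqref{defw1} to $Q_h\bv=\{Q_0\bv,Q_b\bv\}$, giving
\begin{equation*}
(\nabla_w(Q_h\bv),q)_T=-(Q_0\bv,\nabla\cdot q)_T+\langle Q_b\bv,q\bn\rangle_{\partial T}.
\end{equation*}
Since $\nabla\cdot q\in[P_{k-2}(T)]^2\subset\bP_k(T)$ and $q\bn|_e\in \bP_{k-1}(e)\subset\bP_k(e)$, the defining orthogonality of $Q_0$ and $Q_b$ lets me replace $Q_0\bv$ and $Q_b\bv$ by $\bv$ itself. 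Integration by parts then yields $(\nabla\bv,q)_T$, which equals $(\dQ_h\nabla\bv,q)_T$ by the definition of $\dQ_h$ on $[P_{k-1}(T)]^{2\times 2}$. Uniqueness of the weak gradient in $[P_{k-1}(T)]^{2\times 2}$ gives the identity on each element.

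The identity \eqref{exchange2} is then immediate: $\eps_w(Q_h\bv)=\tfrac12(\nabla_w(Q_h\bv)+(\nabla_w(Q_h\bv))^T)=\tfrac12(\dQ_h\nabla\bv+\dQ_h(\nabla\bv)^T)=\dQ_h\eps(\bv)$, using that the entrywise $L^2$ projection $\dQ_h$ commutes with transposition. For \eqref{exchange3} I would run the same scheme: test with $\tau\in P_{k-1}(T)$ in \eqref{defw3}, note that $\nabla\tau\in[P_{k-2}(T)]^2\subset\bP_k(T)$ and $\tau|_e\in P_{k-1}(e)\subset P_k(e)$ so $Q_0$ and $Q_b$ again disappear, integrate by parts to obtain $(\nabla\cdot\bv,\tau)_T$, and identify this with $(\Q_h(\nabla\cdot\bv),\tau)_T$ by the defining property of $\Q_h$ on $P_{k-1}(T)$.

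There is no genuine obstacle here; the argument is a purely algebraic consistency check. The only point that requires any care is the degree counting showing that $\nabla\cdot q$, $q\bn$, $\nabla\tau$, and $\tau$ all lie in the ranges of the respective projections, so that one may freely strip off $Q_0$ and $Q_b$ before integrating by parts. Once this is noted, the three identities all drop out from the same three-line computation.
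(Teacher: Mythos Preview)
Your argument is correct and is precisely the standard proof of these commutation identities; the paper does not reproduce it but simply cites Lemma~6.1 of \cite{Wang2016c}, whose proof proceeds in exactly the way you describe. The same degree-counting and integration-by-parts mechanism is in fact used explicitly in the paper's proof of the next lemma (equations \eqref{property1}--\eqref{property2}), so your proposal is entirely in line with the paper's approach.
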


Furthermore, silimar to the property in Lemma 2.1 in \cite{Ye2020}, we have the following properties.
\begin{lemma}
	For any $\bv\in\bh^1(\Omega)$, there holds that on each element $T\in\T_h$
	\begin{align}
		\nabla_w\bv =& \nabla_w(Q_h\bv),\label{property1}\\
		\nabla_w\cdot\bv =& \nabla_w\cdot(Q_h\bv).\label{property2}
	\end{align}
\end{lemma}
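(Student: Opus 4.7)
My plan is to argue directly from the definitions \eqref{defw1}--\eqref{defw3} of the weak differential operators, combining them with the $L^2$-orthogonality of the projections $Q_0$ and $Q_b$. Since the two identities \eqref{property1} and \eqref{property2} are strictly parallel, I will set up the gradient identity carefully and then sketch the divergence one.

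Fix $T\in\T_h$ and $\bv\in\bh^1(\Omega)$. Treating $\bv$ as a weak function $\{\bv|_T,\bv|_{\partial T}\}$, the definition \eqref{defw1} gives, for every $q\in[P_{k-1}(T)]^{2\times 2}$,
\begin{equation*}
(\nabla_w\bv,q)_T=-(\bv,\nabla\cdot q)_T+\langle\bv,q\bn\rangle_{\partial T},
\end{equation*}
while applied to $Q_h\bv=\{Q_0\bv,Q_b\bv\}$ it gives
\begin{equation*}
(\nabla_w(Q_h\bv),q)_T=-(Q_0\bv,\nabla\cdot q)_T+\langle Q_b\bv,q\bn\rangle_{\partial T}.
\end{equation*}
Subtracting,
\begin{equation*}
(\nabla_w\bv-\nabla_w(Q_h\bv),q)_T=-(\bv-Q_0\bv,\nabla\cdot q)_T+\langle\bv-Q_b\bv,q\bn\rangle_{\partial T}.
\end{equation*}
The first key observation is that $\nabla\cdot q\in[P_{k-2}(T)]^2\subset\bP_k(T)$, so the definition of $Q_0$ as the $L^2$ projection onto $\bP_k(T)$ kills the volume term. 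The second is that on each edge $e\subset\partial T$ the trace $(q\bn)|_e$ has polynomial degree at most $k-1\le k$, so the definition of $Q_b$ as the edgewise $L^2$ projection onto $\bP_k(e)$ kills the boundary term. Hence the right-hand side vanishes for every admissible $q$; since $\nabla_w\bv-\nabla_w(Q_h\bv)\in[P_{k-1}(T)]^{2\times 2}$, testing against itself yields \eqref{property1}.

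The divergence identity \eqref{property2} proceeds in the same manner: subtracting \eqref{defw3} applied to $\bv$ and to $Q_h\bv$ leaves a defect
\begin{equation*}
(\nabla_w\cdot\bv-\nabla_w\cdot(Q_h\bv),\tau)_T=-(\bv-Q_0\bv,\nabla\tau)_T+\langle(\bv-Q_b\bv)\cdot\bn,\tau\rangle_{\partial T}
\end{equation*}
for $\tau\in P_{k-1}(T)$, and both terms vanish because $\nabla\tau$ has degree $\le k-2$ and $\tau|_e$ has degree $\le k-1$, while $Q_0$ and $Q_b$ preserve degree up to $k$. I do not expect a real obstacle here; the only point requiring attention is the degree bookkeeping when comparing the order of the test polynomials against the orders preserved by $Q_0$ and $Q_b$, so that the orthogonality applies even in the lowest-order case $k=1$. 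Indeed, this is exactly the same mechanism that underlies the commutative property of Lemma~\ref{exchange}, and one could equivalently deduce \eqref{property1}--\eqref{property2} by first showing $\nabla_w\bv=\dQ_h\nabla\bv$ and $\nabla_w\cdot\bv=\Q_h(\nabla\cdot\bv)$ via integration by parts on $T$, then invoking \eqref{exchange1} and \eqref{exchange3}.
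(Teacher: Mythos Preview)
Your proof is correct and matches the paper's approach closely. For \eqref{property2} the paper does exactly your direct computation, replacing $\bv$ by $Q_0\bv$ and $Q_b\bv$ via the projection orthogonality; for \eqref{property1} the paper instead cites the identity $\nabla_w\bv=\dQ_h\nabla\bv$ from \cite{Ye2020} and combines it with \eqref{exchange1}, which is precisely the alternative route you outline in your final sentence.
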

\begin{proof}
	Refer to Lemma 2.1 in \cite{Ye2020}, we have
	\begin{align}\label{exchange4}
		\nabla_w\bv =& \dQ_h\nabla\bv,
	\end{align}
	which combines with \eqref{exchange1} verifies \eqref{property1}. Then it remains to prove \eqref{property2}.
	
	By the definition in \eqref{defw3} and Green formulation, for any $\tau\in P_{k-1}(T)$, we have
	\begin{align*}
		(\nabla_w\cdot\bv,\tau)_T
		&=-(\bv,\nabla\tau)_T+\la\bv\cdot\bn,\tau
		\ra\\
		&=-(Q_0\bv,\nabla\tau)_T+\la Q_b\bv\cdot\bn,\tau
		\ra\\
		&=(\nabla_w\cdot(Q_h\bv),\tau)_T,
	\end{align*}
	which verifies \eqref{property2}. The proof is complete.
\end{proof}

Next we define two bilinear forms on $V_h$. For any $\bv,\bw\in V_h$,
\begin{align*}
	s(\bv,\bw)=&\gamma(h)\sumT h_T^{-1}\langle \bv_0-\bv_b, \bw_0-\bw_b\rangle_{\partial T},\\
	b_w(\bv,\bw)=&(\bv_0,\bw_0),
\end{align*}
%where $\gamma(h)\le 1$ is a parameter to be selected later.
where $\gamma(h)= h^{\delta}$ for small positive constant $\delta$.

Then we are ready to state the WG algorithm.
\begin{algorithm1}
	Find $\bu_h\in V_h$, $\gamma_h\in\Real$ such that $b_w(\bu_h,\bu_h)=1$ and
	\begin{eqnarray}\label{WGscheme}
		a_w(\bu_h,\bv)=\gamma_h b_w(\bu_h,\bv),\quad\forall \bv\in V_h,
	\end{eqnarray}
	where
	\begin{eqnarray*}
		a_w(\bv,\bw)=2\mu(\eps_w(\bv),\eps_w(\bw))+\lambda(\nabla_w\cdot \bv,\nabla_w\cdot\bw)+s(\bv,\bw).
	\end{eqnarray*}
\end{algorithm1}

Next, we verify Assumptions (A1)-(A6) in Section 3 to give the
error estimation for the weak Galerkin algorithm.

Denote $V_c=\bh_E^1(\Omega)$,
and $V =V_c+V_h$. For any $\bv,\bw\in V$, the inner-product is given by
\begin{equation*}
	(\bv,\bw)_V = (\eps(\bv_0),\eps(\bw_0)) +\lambda(\nabla_w\cdot\bv,\nabla_w\cdot\bw)+\sumT h_T^{-1}\la \bv_0-\bv_b,\bw_0-\bw_b\ra,
\end{equation*}
and the corresponding semi-norm is
\begin{equation*}
	\|\bv\|_V^2 = \sumT\|\eps(\bv_0)\|_T^2 + \lambda\sumT\|\nabla_w\cdot\bv\|_T^2 +\sumT h_T^{-1}\|\bv_0-\bv_b\|_{\partial T}^2.
\end{equation*}
By the Korn's inequality \cite{Wang2015a}, it is easy to check that $\|\cdot\|_V$ defines a norm on $V$, which means $V$ is a Hilbert space with the norm $\|\cdot\|_V$.

First we verify Assumption (A1). It is easy to check that $a(\cdot,\cdot)$ and $a_w(\cdot,\cdot)$ defined in \eqref{vartion1} and \eqref{WGscheme} are symmetric bounded bilinear forms on $V$, and $a(\cdot,\cdot)$ is positive definite. Hence we only need to verify the coercivity of $a_w(\cdot,\cdot)$.
\begin{lemma}\label{coe}
	For any $\bv_h\in V_h$, the following inequality holds
	\begin{eqnarray*}
		a_w(\bv_h,\bv_h)\gtrsim\gamma(h)\|\bv_h\|_V^2.
	\end{eqnarray*}
\end{lemma}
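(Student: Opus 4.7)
The plan is to split $a_w(\bv_h,\bv_h)$ into its three natural pieces,
\begin{equation*}
a_w(\bv_h,\bv_h) = 2\mu\sumT\|\eps_w(\bv_h)\|_T^2 + \lambda\sumT\|\nabla_w\cdot\bv_h\|_T^2 + s(\bv_h,\bv_h),
\end{equation*}
and verify that each of the three matching pieces in $\gamma(h)\|\bv_h\|_V^2$ is controlled. Since $\gamma(h)=h^\delta\le 1$ for $h$ sufficiently small, the divergence and stabilization matchings are immediate: $\gamma(h)\lambda\sumT\|\nabla_w\cdot\bv_h\|_T^2 \le \lambda\sumT\|\nabla_w\cdot\bv_h\|_T^2$ and $\gamma(h)\sumT h_T^{-1}\|\bv_0-\bv_b\|_{\partial T}^2 = s(\bv_h,\bv_h)$, each already appearing in $a_w(\bv_h,\bv_h)$ up to a harmless factor.

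The one nontrivial ingredient, which I expect to be the main obstacle, is the element-wise bound
\begin{equation*}
\|\eps(\bv_0)\|_T^2 \lesssim \|\eps_w(\bv_h)\|_T^2 + h_T^{-1}\|\bv_0-\bv_b\|_{\partial T}^2,
\end{equation*}
which ties the strain energy of the interior polynomial $\bv_0$ to the weak operator and the jump $\bv_0-\bv_b$. To prove it, I would combine the definition \eqref{defw1} of the weak gradient with the integration-by-parts identity $(\nabla\bv_0,q)_T = -(\bv_0,\nabla\cdot q)_T + \la \bv_0,q\bn\ra$, which yields, for every $q\in[P_{k-1}(T)]^{2\times 2}$,
\begin{equation*}
(\nabla_w\bv_h - \nabla\bv_0, q)_T = \la \bv_b - \bv_0,\, q\bn\ra.
\end{equation*}
Since $(A,q) = (A^T,q)$ whenever $q$ is symmetric, this specializes for symmetric $q$ to $(\eps_w(\bv_h)-\eps(\bv_0),q)_T = \la \bv_b-\bv_0, q\bn\ra$. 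Plugging in the admissible choice $q = \eps(\bv_0)\in[P_{k-1}(T)]^{2\times 2}$, applying Cauchy-Schwarz on both sides, invoking the polynomial trace inequality $\|\eps(\bv_0)\bn\|_{\partial T} \lesssim h_T^{-1/2}\|\eps(\bv_0)\|_T$, and dividing through by $\|\eps(\bv_0)\|_T$ produces the desired element-wise estimate.

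Summing over $T\in\T_h$, multiplying by $\gamma(h)$, and using $\gamma(h)\le 1$ together with the fact that $\mu$ is a fixed positive constant gives
\begin{equation*}
\gamma(h)\sumT\|\eps(\bv_0)\|_T^2 \lesssim \gamma(h)\sumT\|\eps_w(\bv_h)\|_T^2 + s(\bv_h,\bv_h) \lesssim 2\mu\sumT\|\eps_w(\bv_h)\|_T^2 + s(\bv_h,\bv_h) \lesssim a_w(\bv_h,\bv_h).
\end{equation*}
Combining this with the two easy matchings for the divergence and stabilization terms yields $\gamma(h)\|\bv_h\|_V^2 \lesssim a_w(\bv_h,\bv_h)$, which is the claim. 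The heart of the argument is the element-wise strain estimate; the rest is assembly and scaling.
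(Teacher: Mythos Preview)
Your proposal is correct and follows essentially the same approach as the paper: both derive the identity $\|\eps(\bv_0)\|_T^2 = (\eps_w(\bv_h),\eps(\bv_0))_T + \la \bv_0-\bv_b,\eps(\bv_0)\bn\ra$ by testing the weak-gradient definition against $q=\eps(\bv_0)$, then apply Cauchy--Schwarz together with the trace and inverse inequalities, and finish by observing that the divergence and stabilizer terms are already dominated by $a_w(\bv_h,\bv_h)$ up to the factor $\gamma(h)^{-1}$. The only cosmetic difference is that you carry out the strain estimate element by element before summing, whereas the paper writes the identity and the Cauchy--Schwarz step directly at the global (summed) level.
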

\begin{proof}
	From the definition \eqref{defw1} and \eqref{defw2}, we have
	\begin{align*}
		\sumT(\eps(\bv_0),\eps(\bv_0))_T=&\sumT(\nabla \bv_0,\eps(\bv_0))_T
		\\
		=& -\sumT(\bv_0,\di(\eps(\bv_0)))_T + \sumT\la\bv_0,\eps(\bv_0)\bn\ra
		\\
		=& \sumT(\nabla_w \bv_h,\eps(\bv_0))_T + \sumT\la \bv_0-\bv_b,\eps(\bv_0)\bn\ra
		\\
		=& \sumT(\eps_w(\bv_h),\eps(\bv_0))_T + \sumT\la \bv_0-\bv_b,\eps(\bv_0)\bn\ra.
	\end{align*}
	Then, by the trace inequality and the inverse inequality we derive
	\begin{align*}
		\sumT\|\eps(\bv_0)\|_T^2
		\lesssim & \left(\sumT\|\eps_w(\bv_h)\|_T^2\right)^\frac12\left(\sumT\|\eps(\bv_0)\|_T^2\right)^\frac12
		\\
		\quad&+\left(\sumT h_T^{-1}\|\bv_0-\bv_b\|_{\partial T}^2\right)^\frac12\left(\sumT h_T\|\eps(\bv_0)\bn\|_{\partial T}^2\right)^\frac12
		\\
		\lesssim & \left(\sumT\|\eps_w(\bv_h)\|_T^2\right)^\frac12\left(\sumT\|\eps(\bv_0)\|_T^2\right)^\frac12
		\\
		\quad&+\left(\sumT h_T^{-1}\|\bv_0-\bv_b\|_{\partial T}^2\right)^\frac12\left(\sumT \|\eps(\bv_0)\|_T^2\right)^\frac12
		\\
		\lesssim & \left(\sumT\|\eps_w(\bv_h)\|_T^2+\sumT h_T^{-1}\|\bv_0-\bv_b\|_{\partial T}^2\right)^\frac12\left(\sumT \|\eps(\bv_0)\|_T^2\right)^\frac12.
	\end{align*}
	Therefore, we acquire
	\begin{eqnarray}\label{e1}
		\sumT\|\eps(\bv_0)\|_T^2
		\lesssim \gamma(h)^{-1}a_w(\bv_h,\bv_h).
	\end{eqnarray}
	Moreover, it is straightforward to deduce
	 \begin{eqnarray}\label{e2}
	 	\sumT h_T^{-1}\|\bv_0-\bv_b\|_{\partial T}^2\lesssim \gamma(h)^{-1}a_w(\bv_h,\bv_h).
	 \end{eqnarray} 
	Combine \eqref{e1} with \eqref{e2} yields
	\begin{eqnarray*}
		\|\bv_h\|_V^2\lesssim \gamma(h)^{-1}a_w(\bv_h,\bv_h).
	\end{eqnarray*}
	The proof is complete.
\end{proof}

Then we turn to Assumption (A2). Recall that in Section 3 the operators
$K$ and $K_h$ are defined by
\begin{align*}
	a(K\bf,\bv)=& b(\bf,\bv),\quad\forall\bf\in V_c,
	\\
	a_w(K_h\bf_h,\bv_h)=& b(\bf_h,\bv_h),\quad\forall \bf_h\in V_h.
\end{align*}

From Lemma 2.1 in \cite{Mora2020} and the compact inclusion $\bh^{1+r}(\Omega)\hookrightarrow\bh^1(\Omega)$, we get the operator $K$ is compact. As to $K_h$, notice that $K_h$ is a bounded linear and finite ranked operator. Thus $K_h$ is also compact, which verifies Assumption (A2).

Recall the operator $Q_h=\{Q_0,Q_b\}$, where $Q_0$ is the $L^2$ projection operator
onto $\bP_k(T)$ on each element $T\in\T_h$ and $Q_b$ the $L^2$ projection operator
onto $\bP_k(e)$ on each edge $e\in\E_h$. We claim that this operator $Q_h$ satisfies Assumption (A3).
It is obvious that $Q_h\bv_h=\bv_h$ for any $\bv_h\in V_h$, and by the usual property of projection $Q_0$, it follows that for any $\bw\in V$
\begin{eqnarray*}
	b(\bw,\bv_h) = (\bw_0,\bv_0) = (Q_0\bw_0,\bv_0) = b(Q_h\bw,\bv_h).
\end{eqnarray*}
Thus, Assumption (A3) is verified.

In order to obtain the estimation on the eigenfunctions, we need to verify Assumptions (A4) and (A5).
\begin{lemma}\label{delta}
	Suppose $R(E_\mu(K))\subset \bh^k(\Omega)$, then the following estimations hold
	\begin{align*}
		\delta_{h,\mu} \lesssim & h^k,
		\\
		\delta'_{h,\mu} \lesssim & h^{k+1}.
	\end{align*}
\end{lemma}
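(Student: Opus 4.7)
The plan is to fix $u\in R(E_\mu(K))$ with $\|u\|_V=1$ and unpack the three pieces of $\|u-Q_hu\|_V^2$ one at a time, then repeat a much easier version of the argument for the $X$–norm. Writing $(u-Q_hu)_0=u-Q_0u$ and $(u-Q_hu)_b=u|_e-Q_bu$, the three contributions are (i) the strain term $\sum_T\|\eps(u-Q_0u)\|_T^2$, (ii) the $\lambda$–weighted term $\lambda\sum_T\|\nabla_w\cdot(u-Q_hu)\|_T^2$, and (iii) the jump term $\sum_T h_T^{-1}\|(u-Q_0u)-(u-Q_bu)\|_{\partial T}^2$.

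First I would kill the $\lambda$–weighted term by invoking \eqref{property2}: since $u\in\bh^1(\Omega)$, one has $\nabla_w\cdot u=\nabla_w\cdot(Q_hu)$, so $\nabla_w\cdot(u-Q_hu)\equiv 0$. This is the critical observation because it is exactly what keeps the bound free of $\lambda$ and preserves the locking-free character of the estimate. For the strain term I would bound $\|\eps(u-Q_0u)\|_T\le\|\nabla(u-Q_0u)\|_T$ and apply Lemma \ref{projection} with $s=1$ to obtain $\sum_T\|\eps(u-Q_0u)\|_T^2\lesssim h^{2k}\|u\|_{k+1}^2$.

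For the jump term I would write $(u-Q_0u)-(u-Q_bu)=Q_bu-Q_0u$ and split it as $(u-Q_0u)-(u-Q_bu)$, so that the trace inequality together with Lemma \ref{projection} (at $s=0$ and $s=1$) gives
\[
\sum_T h_T^{-1}\|u-Q_0u\|_{\partial T}^2\lesssim h^{-2}\sum_T\|u-Q_0u\|_T^2+\sum_T\|\nabla(u-Q_0u)\|_T^2\lesssim h^{2k}\|u\|_{k+1}^2,
\]
and the standard edge approximation property of $Q_b$ produces the analogous $h^{2k}$ bound for the $u-Q_bu$ piece. Summing the two surviving terms yields $\delta_{h,\mu}\lesssim h^k$. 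For $\delta'_{h,\mu}$, the $X$–norm only sees the interior component, so $\|u-Q_hu\|_X=\|u-Q_0u\|$, and a direct application of Lemma \ref{projection} with $s=0$ immediately gives the $h^{k+1}$ bound.

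The only real obstacle I anticipate is the $\nabla_w\cdot$ term: absent the identity \eqref{property2}, one would have to absorb an estimate weighted by $\sqrt{\lambda}$, which would destroy both the rate and the locking-free property; fortunately \eqref{property2} makes the term identically zero. A secondary caveat is the regularity needed to invoke Lemma \ref{projection} at order $k+1$ on the eigenfunctions; I would appeal to the elliptic regularity for the elasticity operator with mixed boundary data (the same source cited when verifying compactness of $K$ via \cite{Mora2020}) to justify treating $u$ as being in $\bh^{k+1}(\Omega)$ in the local approximation estimates.
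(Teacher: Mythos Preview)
Your proof is correct and follows essentially the same route as the paper: kill the $\lambda$-weighted divergence term via the commutative identity \eqref{property2}, bound the strain by the full gradient, and control the boundary jump through the $L^2$-projection error of $Q_0$ combined with the trace inequality. The only cosmetic difference is that the paper handles the jump term in one stroke via $\|Q_0u-Q_bu\|_{\partial T}=\|Q_b(Q_0u-u)\|_{\partial T}\le\|Q_0u-u\|_{\partial T}$ (since $Q_0u|_e\in\bP_k(e)$ and $Q_b$ is the edge $L^2$ projection), thereby avoiding your separate appeal to an edge approximation property of $Q_b$.
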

\begin{proof}
	By Lemma \ref{projection} we have
	\begin{eqnarray*}
		\|\bv-Q_h\bv\|_X = \sumT\|\bv-Q_0\bv\|_T\lesssim  h^{k+1}.
	\end{eqnarray*}	
	Furthermore, by Lemma \ref{projection} and Lemma \ref{exchange}, we derive
	\begin{align*}
		\|\bv-Q_h\bv\|_V^2 =& 
		\sumT\|\eps(\bv-Q_0\bv)\|_T^2 + \lambda\sumT\|\nabla_w\cdot (\bv-Q_h\bv)\|_T^2\\
		\quad&+ \sumT h_T^{-1}\|Q_0\bv- Q_b\bv\|_{\partial T}^2
		\\
		\le& \sumT\|\nabla(\bv-Q_0\bv)\|_T^2 + \sumT h_T^{-1}\|Q_0\bv- \bv\|_{\partial T}^2
		\\
		\le& \sumT\|\nabla(\bv-Q_0\bv)\|_T^2 + \sumT h_T^{-2}\|Q_0\bv- \bv\|_T^2
		\\
		\lesssim & h^{2k}.
	\end{align*}
	The proof is complete.
\end{proof}

Denote
\begin{align*}
	e_{h,\mu} =& \|( K-K_hQ_h)|_{R(E_\mu(K))}\|_V,
	\\
	e'_{h,\mu} =& \|(\Pi_0 K-\Pi_0K_hQ_h)|_{R(E_\mu(K))}\|_X.
\end{align*}

Since $e_{h,\mu}$ and $e'_{h,\mu}$ are errors of the WG method for the linear elasticity equation, we consider the source problem of \eqref{vartion1}
\begin{eqnarray}\label{source}
	a(\bu,\bv)=b(\bf,\bv),\quad\forall\bv\in\bh_E^1(\Omega),
\end{eqnarray}
with $\bf\in\bl^2(\Omega)$ is a given function. The corresponding WG scheme
is to find $\bu_h\in V_h$ such that
\begin{eqnarray}\label{sourceh}
	a_w(\bu_h,\bv_h)=b_w(\bf,\bv_0),\quad \forall\bv_h\in V_h.
\end{eqnarray}

The error estimates of \eqref{sourceh} are analyzed in Appendix (See Theorem \ref{Vnorm} and Theorem \ref{Xnorm}). Then the estimations for $e_{h,\mu}$ and $e'_{h,\mu}$ are as follows.
\begin{lemma}\label{e}
	Suppose $R(E_\mu(K))\subset\bh^{k+1}(\Omega)$, then the following estimations hold
	\begin{align*}
		e_{h,\mu} \lesssim & \gamma(h)^{-1}h^k,\\
		e'_{h,\mu} \lesssim & \gamma(h)^{-1}h^{k+1}.
	\end{align*}
\end{lemma}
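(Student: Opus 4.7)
The plan is to reduce the estimation of $e_{h,\mu}$ and $e'_{h,\mu}$ to the a priori error analysis of the weak Galerkin discretization of the source problem \eqref{source}, which is exactly what Theorem \ref{Vnorm} and Theorem \ref{Xnorm} in the Appendix supply.

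First I fix an arbitrary $\bu \in R(E_\mu(K))$ with $\|\bu\|_V = 1$. By the definition \eqref{vartion} of $K$, the element $K\bu \in V_c$ solves
\begin{equation*}
a(K\bu, \bv) = b(\bu, \bv), \quad \forall \bv \in V_c,
\end{equation*}
while the Assumption (A3) identity $b(Q_h \bu, \bv_h) = b(\bu, \bv_h)$ together with the definition of $K_h$ gives
\begin{equation*}
a_w(K_h Q_h \bu, \bv_h) = b(\bu, \bv_h) = b_w(\bu, \bv_h), \quad \forall \bv_h \in V_h.
\end{equation*}
Hence $K_h Q_h \bu$ is precisely the WG approximation \eqref{sourceh} of $K\bu$ with right-hand side $\bf = \bu$. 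In other words, $\|(K - K_h Q_h)\bu\|_V$ and $\|(\Pi_0 K - \Pi_0 K_h Q_h)\bu\|_X$ are nothing but the $V$- and $X$-norm errors of the WG source scheme with data $\bu$.

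Invoking Theorem \ref{Vnorm} and Theorem \ref{Xnorm}, which will furnish estimates of the form
\begin{align*}
\|K\bu - K_h Q_h \bu\|_V &\lesssim \gamma(h)^{-1} h^k \|K\bu\|_{k+1}, \\
\|\Pi_0 K \bu - \Pi_0 K_h Q_h \bu\|_X &\lesssim \gamma(h)^{-1} h^{k+1} \|K\bu\|_{k+1},
\end{align*}
it then remains to control $\|K\bu\|_{k+1}$ uniformly in $\bu$. Since $\bu$ is an eigenfunction of $K$ associated with $\mu$, one has $K\bu = \mu \bu$, and the hypothesis $R(E_\mu(K)) \subset \bh^{k+1}(\Omega)$ combined with the finite-dimensionality of the eigenspace (on which $\|\cdot\|_V$ and $\|\cdot\|_{k+1}$ are equivalent with a constant depending only on $\mu$) yields $\|K\bu\|_{k+1} = \mu \|\bu\|_{k+1} \lesssim \|\bu\|_V = 1$. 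Taking the supremum over unit-$V$-norm $\bu \in R(E_\mu(K))$ then produces the two claimed bounds.

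The main obstacle lies not in the present lemma but in the source problem estimates of the Appendix: extracting the $\gamma(h)^{-1}$ prefactor must be done carefully because the coercivity of $a_w$ given by Lemma \ref{coe} scales like $\gamma(h)$ rather than being uniform, and the sharper $h^{k+1}$ rate in the $X$-norm will require a duality argument whose constants should be kept $\lambda$-independent via Assumption (A0) so that the locking-free character of the scheme is preserved.
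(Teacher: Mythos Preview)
Your proposal is correct and follows exactly the approach the paper takes: the paper simply observes that $e_{h,\mu}$ and $e'_{h,\mu}$ are the $V$- and $X$-norm errors of the WG source scheme \eqref{sourceh} with the eigenfunction as right-hand side, and then invokes Theorem \ref{Vnorm} and Theorem \ref{Xnorm} from the Appendix. One small imprecision: those theorems actually bound the error by $\gamma(h)^{-1}h^k\|\bf\|_{k-1}$ (here $\bf=\bu$) rather than by $\|K\bu\|_{k+1}$, but your finite-dimensionality argument handles either quantity equally well, so this does not affect the validity of the reduction.
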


Thus, according to Theorem \ref{thm-vnorm} and Theorem \ref{thm-xnorm}, we have the
estimations on the eigenfunctions.
\begin{theorem}\label{eigenvector}
	Suppose $\gamma$ is an eigenvalue of \eqref{vartion1} with multiplicity $m$,
	and $R(E_\mu(K))\subset\bh^{k+1}(\Omega)$ is the corresponding $m$-dimensional eigenspace. Suppose \\ $\{\gamma_{h,j}\}_{j=1}^m$
	are the eigenvalues of \eqref{WGscheme} approximating $\gamma$, and $\{\bu_{h,j}\}_{j=1}^m$ is
	a basis of the corresponding eigenspace $R(E_{\mu,h}(K_h))$. Then, for any $j=1,\cdots,m$ there
	exists an eigenfunction $\bu_j\in R(E_\mu(K))$ such that
	\begin{align*}
		\|\bu_j-\bu_{j,h}\|_V \lesssim & \gamma(h)^{-1}h^k,
		\\
		\|\bu_j-\bu_{j,h}\|_X \lesssim & \gamma(h)^{-1}h^{k+1}.
	\end{align*}
\end{theorem}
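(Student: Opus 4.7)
The plan is to reduce this theorem to the two abstract distance estimates already proved in Section~3, namely Theorem~\ref{thm-vnorm} and Theorem~\ref{thm-xnorm}, and then insert the concrete approximation bounds established earlier in the present section. First I would check off the hypotheses: Assumption (A1) is Lemma~\ref{coe} (together with the symmetry and positivity of $a$, which are automatic from its definition), Assumption (A2) follows from the compactness of $K$ obtained from the $\bh^{1+r}$ regularity and the compact Sobolev embedding together with the fact that $K_h$ has finite rank, Assumption (A3) holds for $Q_h=\{Q_0,Q_b\}$ because $Q_0$ is an $L^2$ projection, and Assumptions (A4)--(A5) are the quantitative content of Lemmas~\ref{delta} and \ref{e}. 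With all of these in place, both abstract theorems apply to the present $(K,K_h)$.

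Next I would insert the concrete rates. Combining Theorem~\ref{thm-vnorm} with Lemmas~\ref{delta}--\ref{e} yields
\[
\hat\rho_V\bigl(R(E_\mu(K)),\,R(E_{\mu,h}(K_h))\bigr)
\;\le\; C\bigl(e_{h,\mu}+\delta_{h,\mu}\gamma(h)^{-1}\bigr)
\;\lesssim\;\gamma(h)^{-1}h^{k},
\]
and Theorem~\ref{thm-xnorm} analogously gives the $X$-norm gap bound $\lesssim\gamma(h)^{-1}h^{k+1}$. These are the two rates appearing in the statement.

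The remaining work is to turn these subspace-gap estimates into pointwise error bounds for each prescribed basis vector $\bu_{h,j}$. For $h$ small enough, the $V$-gap bound implies $\dim R(E_{\mu,h}(K_h))=m$ and, moreover, that the spectral projection $E_\mu(K)\colon R(E_{\mu,h}(K_h))\to R(E_\mu(K))$ is an isomorphism. I would therefore \emph{define} $\bu_j:=E_\mu(K)\bu_{h,j}$ (up to a mild normalization so that $\bu_j$ satisfies $b(\bu_j,\bu_j)=1$). By construction $\bu_j\in R(E_\mu(K))$, and the $V$-distance between $\bu_{h,j}$ and its spectral projection is controlled by $\hat\rho_V$ applied to the normalized $\bu_{h,j}$; the same $\bu_j$ simultaneously realizes the $\hat\rho_X$ bound, giving the second inequality.

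The main obstacle is precisely this last step: one must produce a \emph{single} eigenfunction $\bu_j$ that works for both norms, rather than picking the best $V$- and best $X$-approximants independently. Defining $\bu_j$ via the spectral projection of $\bu_{h,j}$ (and invoking Kato-type perturbation arguments underlying Theorems~\ref{thm-vnorm}--\ref{thm-xnorm}, in the spirit of \cite{MR1115240}) resolves this. Once this unified choice is made, the two claimed inequalities follow by substituting the rates from Lemmas~\ref{delta} and \ref{e}, and the factor $\gamma(h)^{-1}$ dominates the $\delta$-terms so that it can be factored out in the final bound.
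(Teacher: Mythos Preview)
Your proposal is correct and matches the paper's approach: the paper does not give a separate proof of this theorem but simply states it as a direct consequence of Theorems~\ref{thm-vnorm}--\ref{thm-xnorm} together with the verifications of Assumptions (A1)--(A5) and the rates from Lemmas~\ref{delta} and~\ref{e}. Your additional discussion of how to pass from the subspace-gap bounds to a single $\bu_j$ realizing both estimates is more explicit than anything in the paper (which just asserts such $\bu_j$ exist, in the spirit of \cite{MR1115240}), but it is consistent with and fills in what the paper takes for granted.
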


To estimate the errors of eigenvalues, we have the following result, which verifies Assumption (A6).
\begin{lemma}\label{eps}
	For any $\bu\in \bh^{k+1}(\Omega)$, the following estimate holds
	\begin{eqnarray}
		|\varepsilon_{h,u}| \lesssim h^{2k}.
	\end{eqnarray}
\end{lemma}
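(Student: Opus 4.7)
The plan is to expand $\varepsilon_{h,\bu} = a(\bu,\bu) - a_w(Q_h\bu, Q_h\bu)$ into three pieces corresponding to the strain, divergence, and stabilization contributions of $a_w$, and to bound each piece separately using Lemma \ref{projection}, the commutative diagram of Lemma \ref{exchange}, and Assumption (A0).

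First, I would use the commutative properties $\eps_w(Q_h\bu) = \dQ_h(\eps(\bu))$ and $\nabla_w\cdot(Q_h\bu) = \Q_h(\nabla\cdot\bu)$ from Lemma \ref{exchange} to rewrite
\begin{eqnarray*}
a_w(Q_h\bu,Q_h\bu) = 2\mu(\dQ_h\eps(\bu),\dQ_h\eps(\bu)) + \lambda(\Q_h\di\bu,\Q_h\di\bu) + s(Q_h\bu,Q_h\bu).
\end{eqnarray*}
The standard orthogonality of the $L^2$ projections $\dQ_h$ and $\Q_h$ then yields the Pythagorean-type identities
\begin{eqnarray*}
(\eps(\bu),\eps(\bu)) - (\dQ_h\eps(\bu),\dQ_h\eps(\bu)) &=& \|\eps(\bu) - \dQ_h\eps(\bu)\|^2, \\
(\di\bu,\di\bu) - (\Q_h\di\bu,\Q_h\di\bu) &=& \|\di\bu - \Q_h\di\bu\|^2,
\end{eqnarray*}
so that
\begin{eqnarray*}
\varepsilon_{h,\bu} = 2\mu\|\eps(\bu) - \dQ_h\eps(\bu)\|^2 + \lambda\|\di\bu - \Q_h\di\bu\|^2 - s(Q_h\bu,Q_h\bu).
\end{eqnarray*}

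For the strain term, Lemma \ref{projection} applied with $\bu\in\bh^{k+1}(\Omega)$ directly gives $\|\eps(\bu) - \dQ_h\eps(\bu)\|^2 \lesssim h^{2k}\|\bu\|_{k+1}^2$. For the divergence term, applying Lemma \ref{projection} with $\tau = \di\bu$ produces $\|\di\bu - \Q_h\di\bu\|^2 \lesssim h^{2k}\|\di\bu\|_k^2$, and then the locking-free factor $\lambda$ is absorbed using Assumption (A0), which guarantees that $\lambda\|\di\bu\|_k^2$ is uniformly bounded. This is the step where the scheme's robustness in $\lambda$ enters the estimate.

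For the stabilization term $s(Q_h\bu,Q_h\bu) = \gamma(h)\sumT h_T^{-1}\|Q_0\bu - Q_b\bu\|_{\partial T}^2$, I would insert $\pm \bu$ and apply the triangle inequality to split into $\|Q_0\bu - \bu\|_{\partial T}$ and $\|\bu - Q_b\bu\|_{\partial T}$. Each piece is controlled via the trace inequality combined with the $L^2$-projection approximation bounds in Lemma \ref{projection}, yielding $\sumT h_T^{-1}\|Q_0\bu - Q_b\bu\|_{\partial T}^2 \lesssim h^{2k}\|\bu\|_{k+1}^2$. Since $\gamma(h) = h^\delta \le 1$ for $h$ small, the stabilization contribution is at worst $O(h^{2k+\delta})$, hence $O(h^{2k})$. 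Combining the three bounds gives $|\varepsilon_{h,\bu}|\lesssim h^{2k}$. The main obstacle, though mild, is tracking the $\lambda$-dependence in the divergence term so that Assumption (A0) is used precisely to pay for the Lamé constant and produce a $\lambda$-independent bound.
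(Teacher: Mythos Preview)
Your proposal is correct and follows essentially the same approach as the paper: both use Lemma~\ref{exchange} to rewrite $a_w(Q_h\bu,Q_h\bu)$, apply the Pythagorean identity for the $L^2$ projections $\dQ_h$ and $\Q_h$, and bound the stabilization term via the trace inequality and Lemma~\ref{projection}. The only minor difference is that the paper bounds the stabilization directly via $\|Q_0\bu - Q_b\bu\|_{\partial T} = \|Q_b(Q_0\bu - \bu)\|_{\partial T} \le \|Q_0\bu - \bu\|_{\partial T}$ rather than inserting $\pm\bu$ and using the triangle inequality, and the paper leaves the use of Assumption~(A0) implicit in the $\lesssim$ notation.
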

\begin{proof}
	It follows from Lemma \ref{exchange} and the trace inequality that
	\begin{align*}
		|\varepsilon_{h,u}| =& |a(\bu,\bu) - a_w(Q_h\bu,Q_h\bu)|
		\\
		=& \left|2\mu(\|\eps(\bu)\|^2 - \sumT\|\dQ_h(\eps(\bu))\|_T^2)+\lambda(\|\di\bu\|^2 - \sumT\|\mathcal{Q}_h(\di\bu)\|_T^2) -s(Q_h\bu,Q_h\bu)\right|
		\\
		=& 2\mu\sumT\|\eps(\bu)-\dQ_h(\eps(\bu))\|_T^2 +\lambda\sumT\|\di \bu-\mathcal{Q}_h(\di\bu)\|_T^2
		\\
		\quad&+ \gamma(h)\sumT h_T^{-1}\|Q_0\bu -Q_b\bu)\|_{\partial T}^2
		\\
		\le& 2\mu\sumT\|\eps(\bu)-\dQ_h(\eps(\bu))\|_T^2 +\lambda\sumT\|\di \bu-\mathcal{Q}_h(\di\bu)\|_T^2
		\\
		\quad&+ \gamma(h)\sumT h_T^{-1}\|Q_0\bu -\bu\|_{\partial T}^2
		\\
		\lesssim & h^{2k},
	\end{align*}
	which completes the proof.
\end{proof}

By Theorem \ref{thm-eigenvalue} we derive the estimation on the eigenvalues.
\begin{theorem}\label{eigenvalue}
	Suppose $\gamma$ is an eigenvalue of \eqref{vartion1} with multiplicity $m$,
	and $R(E_\mu(K))\subset \bh^{k+1}(\Omega)$ is corresponding $m$-dimensional eigenspace. Suppose $\{\gamma_{h,j}\}_{j=1}^m$
	are the eigenvalues of \eqref{WGscheme} approximating $\gamma$, and $\{\bu_{h,j}\}_{j=1}^m$ is
	a basis of the corresponding eigenspace $R(E_{\mu,h}(K_h))$. Then when $h$ is  small enough, for any $j=1,\cdots,m$ there
	holds
	\begin{eqnarray}\label{gammaerr}
		|\gamma-\gamma_{h,j}|\lesssim \gamma(h)^{-2}h^{2k}.
	\end{eqnarray}
\end{theorem}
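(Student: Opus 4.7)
The plan is to apply the abstract bound from Theorem \ref{thm-eigenvalue} directly, substituting in the concrete convergence rates that have just been established for the WG discretization. Recall that Theorem \ref{thm-eigenvalue} yields, under Assumptions (A1)--(A6),
\begin{equation*}
|\gamma-\gamma_{h,j}|\lesssim \varepsilon_{h,\bu_j} + e_{h,\gamma^{-1}}^2 + e^{\prime 2}_{h,\gamma^{-1}} + \delta_h^2\gamma(h)^{-2} + \delta^{\prime 2}_h\gamma(h)^{-2},
\end{equation*}
so it suffices to plug in the sharpest available bounds for each of the five ingredients.

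First I would quote Lemma \ref{delta} to get $\delta_{h,\mu}\lesssim h^k$ and $\delta'_{h,\mu}\lesssim h^{k+1}$, which give
\begin{equation*}
\delta_h^2\gamma(h)^{-2}\lesssim \gamma(h)^{-2}h^{2k},\qquad \delta^{\prime 2}_h\gamma(h)^{-2}\lesssim \gamma(h)^{-2}h^{2k+2}.
\end{equation*}
Next, from Lemma \ref{e} (the source-problem error estimates in $V$- and $X$-norms from the Appendix), I read off
\begin{equation*}
e_{h,\gamma^{-1}}^2\lesssim \gamma(h)^{-2}h^{2k},\qquad e^{\prime 2}_{h,\gamma^{-1}}\lesssim \gamma(h)^{-2}h^{2k+2}.
\end{equation*}
Finally, Lemma \ref{eps} gives $\varepsilon_{h,\bu_j}\lesssim h^{2k}$ for the consistency term (using the regularity $R(E_\mu(K))\subset \bh^{k+1}(\Omega)$).

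Collecting these contributions, the higher-order $h^{2k+2}$ pieces are absorbed, and since $\gamma(h)^{-2}=h^{-2\delta}\geq 1$ for small $h$ the bare $h^{2k}$ term is dominated by $\gamma(h)^{-2}h^{2k}$. Hence all five terms are bounded by $\gamma(h)^{-2}h^{2k}$, yielding \eqref{gammaerr}. There is no genuine obstacle here: the only small point to state carefully is that the smallness of $h$ is needed so that the cluster $\{\gamma_{h,j}\}_{j=1}^m$ of discrete eigenvalues approximating $\gamma$ is well defined (which is inherited from Theorem \ref{thm-eigenvalue}), and that the constant in $\lesssim$ may depend on $\mu=\gamma^{-1}$ but is independent of $h$ and $\lambda$. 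The work is entirely in the preceding lemmas; this theorem is a bookkeeping corollary.
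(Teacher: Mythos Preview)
Your proposal is correct and follows essentially the same approach as the paper: invoke Theorem \ref{thm-eigenvalue} and substitute the bounds from Lemmas \ref{delta}, \ref{e}, and \ref{eps}, then observe that the dominant contribution is $\gamma(h)^{-2}h^{2k}$. The paper's proof is precisely this bookkeeping, and your additional remark that $\gamma(h)^{-2}\ge 1$ absorbs the bare $h^{2k}$ term makes explicit a step the paper leaves implicit.
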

\begin{proof}
	From Theorem \ref{thm-eigenvalue} we have
	\begin{eqnarray*}
		|\gamma-\gamma_{h,j}|\lesssim \varepsilon_{h,u_j} + e_{h,\gamma^{-1}}^2 + e^{\prime 2}_{h,\gamma^{-1}}+ \gamma(h)^{-2} \delta_{h,\gamma^{-1}}^2+ \gamma(h)^{-2} \delta^{\prime 2}_{h,\gamma^{-1}}.
	\end{eqnarray*}
	It follows from Lemma \ref{delta}, Lemma \ref{e}, and Lemma \ref{eps} that
	\begin{align*}
		\varepsilon_{h,u_j} \lesssim & h^{2k},\quad
		e_{h,\gamma^{-1}}^2 \lesssim \gamma(h)^{-2}h^{2k},
		\\
		e'^2_{h,\gamma^{-1}} \lesssim & \gamma(h)^{-2}h^{2k+2},\quad
		\delta_{h,\gamma^{-1}}^2\lesssim h^{2k},
		\\
		\delta'^2_{h,\gamma^{-1}} \lesssim & h^{2k+2},
	\end{align*}
	which implies \eqref{gammaerr} and finishes the proof.
\end{proof}

Next, we prove the WG scheme \eqref{WGscheme} provides asymptotic lower bounds of the eigenvalues by verifying Assumption (A7). The following lower bound estimate is crucial in the analysis, which is proved in Theorem 2.1 \cite{Lin2011a}.
\begin{lemma}\label{lowerbound}
	The following lower bound for the convergence rate holds for the exact eigenfunction $\bu$ of the eigenvalue problem \eqref{vartion1}
	\begin{eqnarray*}
		\sumT\|\eps(\bu)-\dQ_h(\eps(\bu))\|_T^2\gtrsim h^{2k}.
	\end{eqnarray*}
\end{lemma}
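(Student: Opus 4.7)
The plan is to proceed by contradiction, combining a saturation-type argument with the structure of the elastic eigenvalue equation. Suppose the inequality failed; then along a subsequence $h\to 0$ we would have $\sumT\|\eps(\bu)-\dQ_h(\eps(\bu))\|_T^2=o(h^{2k})$. I would first localize: on a compact interior subdomain $\Omega'\subset\subset\Omega$ where $\bu\in C^\infty$ by interior elliptic regularity for the eigenvalue equation, I would do a Taylor expansion of $\eps(\bu)$ around the centroid $x_T$ of each element $T$ meeting $\Omega'$. Since $\dQ_h$ is the local $L^2$ best approximation by $[P_{k-1}(T)]^{2\times 2}$, the Taylor terms of degree $\le k-1$ are removed exactly, and the residual is dominated by the $k$-th order Taylor polynomial whose $L^2$ norm squared on $T$ is $\simeq h_T^{2k+2}|D^k\eps(\bu)(x_T)|^2$, up to lower-order corrections in $h_T$.

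Summing over the $\simeq h^{-2}$ shape-regular elements intersecting $\Omega'$ and using $h_T\simeq h$ gives the two-sided estimate
\begin{eqnarray*}
\sumT\|\eps(\bu)-\dQ_h(\eps(\bu))\|_T^2 \;\simeq\; h^{2k}\int_{\Omega'}|D^k\eps(\bu)|^2\,dx \;+\; \text{l.o.t.}
\end{eqnarray*}
Hence the assumed super-convergence forces $D^k\eps(\bu)\equiv 0$ on $\Omega'$. By interior analyticity (or unique continuation) for the elliptic eigenvalue system, $\eps(\bu)$ must then be a single polynomial of degree at most $k-1$ on all of $\Omega$.

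With $\eps(\bu)$ of polynomial degree $\le k-1$, both $\di\bu=\mathrm{tr}(\eps(\bu))$ and hence $\sigma(\bu)=2\mu\eps(\bu)+\lambda(\di\bu)\bi$ are polynomials of degree $\le k-1$. The strong form of the eigenvalue equation, $-\nabla\cdot\sigma(\bu)=\gamma\bu$ with $\gamma>0$, therefore forces $\bu$ itself to be a polynomial of degree at most $k-2$. Iterating (each pass through the equation drops the degree by one) eventually yields $\bu\equiv 0$, contradicting the normalization $b(\bu,\bu)=1$. This establishes the lower bound.

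The main obstacle is making the local saturation step fully quantitative and independent of where $\Omega'$ is chosen. One must argue that the contribution from elements touching the boundary $\partial\Omega$ (where corner singularities may be present on a polygonal domain) cannot artificially suppress the interior $h^{2k}$ contribution; this is handled by simply dropping those boundary elements and noting that the interior sum alone is already $\gtrsim h^{2k}$ once $D^k\eps(\bu)\not\equiv 0$ on $\Omega'$. Carefully tracking the shape-regularity and quasi-uniformity constants so that the implied constants are truly $h$-independent and $\lambda$-independent is the other routine-but-delicate point; for $\lambda$-robustness one uses Assumption~(A0) to keep $\lambda\|\di\bu\|_k^2$ bounded, so that the term $\lambda(\di\bu)\bi$ in $\sigma(\bu)$ does not pollute the constants when performing the degree-reduction step.
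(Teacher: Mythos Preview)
The paper does not give its own proof of this lemma; immediately before the statement it writes ``which is proved in Theorem~2.1 \cite{Lin2011a}'' and provides no further argument. So there is nothing in-paper to compare your proposal against---you are supplying what the authors outsource to a reference.

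Your contradiction argument is the standard route to such saturation lower bounds and is essentially sound: localize to an interior region where $\bu$ is smooth (indeed real-analytic, since the Lam\'e system has constant coefficients), use that the $L^2$ projection error onto $[P_{k-1}(T)]^{2\times2}$ behaves like $h_T^{k+1}|D^k\eps(\bu)(x_T)|$ elementwise, sum, and conclude that super-convergence would force $D^k\eps(\bu)\equiv0$ on an open set. Analytic continuation then makes $\eps(\bu)$, hence $\sigma(\bu)$, a global polynomial of degree $\le k-1$; the eigenvalue equation $\gamma\bu=-\nabla\cdot\sigma(\bu)$ with $\gamma>0$ forces $\bu$ to be a polynomial of strictly smaller degree, and iterating (or comparing leading degrees directly) yields $\bu\equiv0$, contradicting $b(\bu,\bu)=1$. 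This is exactly the mechanism behind the cited result.

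One point deserves more care than you give it. The paper's convention is that the hidden constant in $\gtrsim$ is independent of both $h$ and $\lambda$. Your contradiction argument, as written, produces for each fixed eigenfunction $\bu$ (hence each fixed $\lambda$) a lower bound constant proportional to $\int_{\Omega'}|D^k\eps(\bu)|^2$, but does not show this quantity stays bounded \emph{below} as $\lambda\to\infty$. Invoking Assumption~(A0) does not help here: (A0) bounds $\lambda\|\di\bu\|_k^2$ from \emph{above}, which is the wrong direction for a lower bound on $|D^k\eps(\bu)|$. To get genuine $\lambda$-uniformity you would need an argument that the family of eigenfunctions (normalized in $L^2$) cannot degenerate toward piecewise-polynomial strain as $\lambda\to\infty$; this typically comes from a compactness/limiting argument for the incompressible limit. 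The paper sidesteps this by citing the external reference, so strictly speaking your sketch matches what the paper claims but leaves the same $\lambda$-uniformity unverified in-house.
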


It remains to verify Assumption (A7).
\begin{lemma}\label{A72}
	Suppose $(\gamma,\bu)$ is an eigenpair of \eqref{vartion1} and $(\gamma_h,\bu_h)$ is an eigenpair of
	\eqref{WGscheme} approximating $(\gamma,\bu)$. Suppose $\gamma(h)\ll 1$, then when $h$ small enough, there has
	\begin{eqnarray*}
		\varepsilon_{h,u} \gtrsim \gamma_h\|\bu-\bu_h\|_X^2.
	\end{eqnarray*}
\end{lemma}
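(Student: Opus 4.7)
The plan is to combine the lower-bound-of-convergence-rate estimate in Lemma \ref{lowerbound} with the eigenfunction error bound in Theorem \ref{eigenvector}, exploiting the gap between the orders $h^{2k}$ and $h^{2k+2}$ to absorb the $\gamma(h)^{-2}$ factor. First I would rewrite $\varepsilon_{h,\bu}$ explicitly using the commutative identities of Lemma \ref{exchange}:
\begin{eqnarray*}
\varepsilon_{h,\bu}
&=& 2\mu\sumT\|\eps(\bu)-\dQ_h(\eps(\bu))\|_T^2 + \lambda\sumT\|\di\bu - \Q_h(\di\bu)\|_T^2 \\
&&\quad - s(Q_h\bu,Q_h\bu),
\end{eqnarray*}
which unlike the proof of Lemma \ref{eps} keeps track of the true sign of the stabilization contribution.

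Next, I would establish that $\varepsilon_{h,\bu} \gtrsim h^{2k}$. The first term is bounded below by $C h^{2k}$ by Lemma \ref{lowerbound}; the second term is nonnegative and can be dropped. The stabilization term, by the triangle inequality $\|Q_0\bu-Q_b\bu\|_{\partial T}\le \|Q_0\bu-\bu\|_{\partial T}+\|\bu-Q_b\bu\|_{\partial T}$, the trace inequality applied to $Q_0\bu-\bu$, and the standard approximation properties of $Q_0$ on $T$ and $Q_b$ on $e$, satisfies $\sumT h_T^{-1}\|Q_0\bu-Q_b\bu\|_{\partial T}^2 \lesssim h^{2k}$, so $s(Q_h\bu,Q_h\bu)\lesssim \gamma(h) h^{2k}$. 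Under the assumption $\gamma(h)\ll 1$, this stabilization correction is of strictly smaller order than $h^{2k}$, and for $h$ small enough we obtain $\varepsilon_{h,\bu} \gtrsim h^{2k}$.

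Finally, I would upper bound the right-hand side. By Theorem \ref{eigenvector}, $\|\bu-\bu_h\|_X \lesssim \gamma(h)^{-1} h^{k+1}$, so $\gamma_h\|\bu-\bu_h\|_X^2 \lesssim \gamma_h \gamma(h)^{-2} h^{2k+2}$. Since $\gamma_h\to\gamma$ is bounded and $\gamma(h)=h^\delta$ with $\delta\in(0,1)$ (so $\gamma(h)^{-2}h^2 = h^{2-2\delta}\to 0$), the right-hand side is $o(h^{2k})$, hence strictly dominated by the lower bound on $\varepsilon_{h,\bu}$ for $h$ sufficiently small. This yields $\varepsilon_{h,\bu} \gtrsim \gamma_h\|\bu-\bu_h\|_X^2$, which is exactly Assumption (A7).

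The main obstacle is the sign issue in the decomposition of $\varepsilon_{h,\bu}$: the stabilization contribution enters with a negative sign, so the positive lower bound from Lemma \ref{lowerbound} could in principle be wiped out. This is precisely why the hypothesis $\gamma(h)\ll 1$ is imposed — it guarantees that the stabilization correction is lower order than the leading $h^{2k}$ term. The second, milder delicacy is making sure that the order mismatch ($h^{2k+2}$ from the $X$-norm error versus $h^{2k}$ from the lower bound) is large enough to swallow the $\gamma(h)^{-2}$ blow-up, which is where the restriction $\delta<1$ built into $\gamma(h)=h^\delta$ does its work.
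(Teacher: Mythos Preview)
Your proposal is correct and follows essentially the same route as the paper: decompose $\varepsilon_{h,\bu}$ via Lemma~\ref{exchange}, drop the nonnegative divergence term, invoke Lemma~\ref{lowerbound} for the $h^{2k}$ lower bound, absorb the stabilization term using $\gamma(h)\ll 1$, and then compare with the $X$-norm eigenfunction error from Theorem~\ref{eigenvector}. If anything, you are slightly more careful than the paper in explicitly tracking the $\gamma(h)^{-2}$ factor in $\gamma_h\|\bu-\bu_h\|_X^2$ and noting that $\gamma(h)^{-2}h^2=h^{2-2\delta}\to 0$, whereas the paper simply writes $\gamma_h\|\bu-\bu_h\|_X^2\lesssim h^{2k+2}$.
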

\begin{proof}
	From Lemma \ref{eps}, we have
	\begin{align*}
		\varepsilon_{h,u} =& |a(\bu,\bu)-a_w(Q_h\bu,Q_h\bu)|
		\\
		=& 2\mu\sumT\|\eps(\bu)-\dQ_h(\eps(\bu))\|_T^2+\lambda\sumT\|\di \bu-\Q_h(\di\bu)\|_T^2 \\
		\quad&-\gamma(h)\sumT h_T^{-1}\|Q_0\bu-Q_b\bu\|_{\partial T}^2\\
		\ge& 2\mu\sumT\|\eps(\bu)-\dQ_h(\eps(\bu))\|_T^2 
		-\gamma(h)\sumT h_T^{-1}\|\bu-Q_0\bu\|_{\partial T}^2.
	\end{align*}
	It follows from Lemma \ref{lowerbound} and the trace inequality that
	\begin{align*}
		\sumT\|\eps(\bu)-\dQ_h(\eps(\bu)\|_T^2 \gtrsim& h^{2k},
		\\
		\gamma(h)\sumT h_T^{-1}\|\bu-Q_0\bu\|_{\partial T}^2 \lesssim& \gamma(h)h^{2k}.
	\end{align*}
	Since $\gamma(h)\ll 1$, then when $h$ is sufficiently small we have
	\[
	\varepsilon_{h,u} \gtrsim h^{2k}.
	\]
	From Theorem \ref{eigenvector} we obtain
	\begin{eqnarray*}
		\gamma_h\|\bu-\bu_h\|_X^2\lesssim h^{2k+2}.
	\end{eqnarray*}
	Thus, when $h$ is small enough we derive
	\[
	\varepsilon_{h,u} \gtrsim \gamma_h\|\bu-\bu_h\|_X^2,
	\]
	which completes the proof.
\end{proof}

Applying Theorem \ref{thm-lower} we obtain the following lower bound estimation.
\begin{theorem}
	Suppose $(\gamma,\bu)$ is an eigenpair of \eqref{vartion1} and $(\gamma_h,\bu_h)$ is an eigenpair of
	\eqref{WGscheme} approximating $(\gamma,\bu)$, then when h is small enough, we have
	\begin{eqnarray*}
		\gamma \ge \gamma_h.
	\end{eqnarray*}
\end{theorem}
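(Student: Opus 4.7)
The plan is essentially to assemble the pieces already established in the section and invoke the abstract lower-bound result Theorem \ref{thm-lower}. Recall that Theorem \ref{thm-lower} asserts $\gamma \ge \gamma_h$ whenever Assumptions (A1), (A3), and (A7) hold in the abstract framework, so I only need to check that each of these three hypotheses has been (or can be) verified in the concrete WG setting.

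First I would cite Lemma \ref{coe} to conclude that Assumption (A1) holds, since the symmetry and boundedness of $a(\cdot,\cdot)$ and $a_w(\cdot,\cdot)$ are immediate from their definitions and the coercivity $a_w(\bv_h,\bv_h) \gtrsim \gamma(h)\|\bv_h\|_V^2$ is precisely the content of that lemma. Next I would recall that Assumption (A3) has already been verified directly in the paragraph following Theorem \ref{eigenvector} (in spirit, after the verification of (A2)), since $Q_h=\{Q_0,Q_b\}$ is a projection onto $V_h$ and the $L^2$-projection property of $Q_0$ gives $b(Q_h\bw,\bv_h) = (Q_0\bw_0,\bv_0) = (\bw_0,\bv_0) = b(\bw,\bv_h)$ for all $\bw\in V$, $\bv_h\in V_h$. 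Finally, Assumption (A7), namely $\varepsilon_{h,\bu} \ge \gamma_h\|\bu-\bu_h\|_X^2$, is exactly the content of Lemma \ref{A72}, which is available under the stated smallness conditions on $h$ and the assumption $\gamma(h)\ll 1$ built into the choice $\gamma(h)=h^\delta$.

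With Assumptions (A1), (A3), and (A7) all in place for sufficiently small $h$, the conclusion $\gamma \ge \gamma_h$ follows directly from Theorem \ref{thm-lower}. No new estimates are needed; the work has all been done in Lemmas \ref{coe}, \ref{eps}, \ref{lowerbound}, and \ref{A72}, together with the abstract machinery of Section 3.

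There is no real obstacle in the present proof, since it is a mere application of the previously proved abstract principle. The only subtle point worth a sentence of emphasis is that Lemma \ref{A72} requires $h$ to be small enough so that the positive leading-order contribution from $2\mu\sum_T\|\eps(\bu)-\mathbb{Q}_h(\eps(\bu))\|_T^2 \gtrsim h^{2k}$ dominates both the stabilization remainder $\gamma(h)h^{2k}$ (vanishing because $\gamma(h)\ll 1$) and the $X$-norm error $\gamma_h\|\bu-\bu_h\|_X^2\lesssim h^{2k+2}$ from Theorem \ref{eigenvector}; these smallness conditions are inherited verbatim by the present theorem.
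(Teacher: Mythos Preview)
Your proposal is correct and follows exactly the paper's approach: the theorem is stated as an immediate consequence of Theorem \ref{thm-lower} once Assumptions (A1), (A3), and (A7) have been verified for the WG setting via Lemma \ref{coe}, the $L^2$-projection property of $Q_h$, and Lemma \ref{A72}, respectively. The paper gives no further argument beyond the phrase ``Applying Theorem \ref{thm-lower}'', so your elaboration of which lemmas supply which hypotheses (and the smallness conditions on $h$) is, if anything, more detailed than the original.
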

\begin{comment}
%\begin{remark}
%  The choice of $\gamma(h)$ is not unique. For example, $\gamma(h)$ can be selected as $h^{-\epsilon}$ for $\epsilon>0$
%  or $-log(h)^{-1}$. In the numerical experiments, we take $\gamma(h)=-log(h)^{-1}$, and the convergence orders are not
%  affected by $\gamma(h)$.
%\end{remark}
\begin{remark}
	In the proof of Lemma \ref{A72}, we have
	\begin{eqnarray*}
		\sumT\|\nabla u-\dQ_h\nabla u\|_T^2 &\gtrsim& h^{2k},
		\\
		\sumT h_T^{-1}\|Q_b(Q_0u-Q_bu)\|_{\partial T}^2 &\lesssim& h^{2k}.
	\end{eqnarray*}
	Thus, there exists a constant $C_0$ independent of $h$ such that
	\begin{eqnarray*}
		\sumT h_T^{-1}\|Q_b(Q_0\bu-Q_b\bu)\|_{\partial T}^2 \le C_0 \sumT\|\eps(\bu)-\dQ_h(\eps(\bu))\|_T^2.
	\end{eqnarray*}
	If we take $\gamma(h)\le C_0^{-1}$, we also have
	\[
	\varepsilon_{h,u} \ge Ch^{2k},
	\]
	and the lower bound estimation can be proved in the same way. That is to say,
	the condition $\gamma(h)\ll 1$ can be replaced by $\gamma(h)\le C_0^{-1}$.
	However, it seems difficult to estimate $C_0$ explicitly.
\end{remark}
\end{comment}
	
	\section{Lower bound for Crouzeix-Raviart element}In this section, we claim that the Crouzeix-Raviart element can also give asymptotic lower bounds for eigenvalues when the corresponding eigenfunctions are sigular. We still use the notations defined in Section 3. For any edge $e\in\E_h^0$, let $\bn_e$ be the unit normal of $e$ pointing from $T^{+}$ to $T^{-}$, for any function $\bv$, the jump $[\![\bv]\!]$ through $e$ is defined by $[\![\bv]\!]|_e=(\bv|_{T^{+}})|_e-(\bv|_{T^{-}})|_e$. For any $e\in\E_h^b$, we define $[\![\bv]\!]|_e=(\bv|_{T})|_e$. The finite element space $V_h$ is the corresponding finite elemnent space on the partition, and denote $V=V_h+\bh_E^1(\Omega)$.

The Crouzeix-Raviart element space is given by:
\begin{align*}
	V_h=&\left\{\bv\in\bl^2(\Omega):\bv|_T\in \bP_1(T),\int_{e}[\![\bv]\!]ds=\b0,\forall e\in\E_h^0\right.,\\
	&\quad\left. and~\int_e\bv ds=\b0,\forall e\in\E_h^b\cap\Gamma_D\right\}.
\end{align*}
Define the interpolation operator $\bi_h:\bh^1_E(\Omega)\rightarrow V_h$ by
\begin{eqnarray}\label{CR-int}
	\int_{e}(\bv-\bi_h\bv)ds=\b0,\quad\forall e\in\E_h.
\end{eqnarray}

The following error estimate for the interpolation operator can be found in \cite{Luo2012,Zhang2023}.
\begin{lemma}
	For any $\bv\in\bh^{1+s}(\Omega)$, there holds
\begin{align}\label{errint}
	\|\bv-\bi_h\bv\|+h|\bv-\bi_h\bv|_1\lesssim& h^{1+s}|\bv|_{1+s}.
\end{align}
\end{lemma}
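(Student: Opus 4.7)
The plan is to prove this by the standard Bramble--Hilbert / scaling argument, reducing everything to a reference triangle and then scaling back. The proof is element-local, so it suffices to establish the bound on a generic $T\in\T_h$ and then sum.

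First I would verify the polynomial preservation property of $\bi_h$: on each element $T$, if $\bp\in\bP_1(T)$, then $\bi_h\bp=\bp$. This follows because $\bi_h\bp$ is the unique element of $\bP_1(T)$ with prescribed edge averages $\frac{1}{|e|}\int_e\bp\,ds$, and $\bp$ itself is an element of $\bP_1(T)$ satisfying those same averages by \eqref{CR-int}. A degree-of-freedom count confirms that the edge averages uniquely determine a $\bP_1$ function, so $\bi_h$ is well-defined and acts as the identity on $\bP_1(T)$.

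Next I would pass to a reference triangle $\hat T$ via an affine map $F_T:\hat T\to T$ with $\|\nabla F_T\|\lesssim h_T$ and $\|\nabla F_T^{-1}\|\lesssim h_T^{-1}$. The CR interpolation commutes with the pullback (since edge averages are preserved under affine change of variables), so $\widehat{\bi_h \bv} = \hat{\bi}\hat\bv$ on $\hat T$. On $\hat T$ the operator $\hat{\bi}:\bh^{1+s}(\hat T)\to\bP_1(\hat T)$ is a bounded linear projection that fixes $\bP_1$, so by the Bramble--Hilbert lemma (in its fractional Sobolev version for $s\in(0,1]$, or applied directly for $s=1$),
\begin{equation*}
\|\hat\bv - \hat{\bi}\hat\bv\|_{m,\hat T} \lesssim |\hat\bv|_{1+s,\hat T},\qquad m=0,1.
\end{equation*}
Scaling back to $T$ with the usual $h_T$-powers yields
\begin{equation*}
\|\bv-\bi_h\bv\|_T \lesssim h_T^{1+s}|\bv|_{1+s,T}, \qquad |\bv-\bi_h\bv|_{1,T}\lesssim h_T^{s}|\bv|_{1+s,T}.
\end{equation*}
Summing over $T\in\T_h$ and using $h_T\le h$ gives the claimed estimate \eqref{errint}.

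The one subtlety worth care is the fractional-order case $s\in(0,1)$: the Bramble--Hilbert lemma needs a version that handles Sobolev--Slobodeckij seminorms, and the seminorm $|\cdot|_{1+s}$ does not decouple element-by-element in the global-sum step because of the nonlocal double integral. The standard fix is to bound the global seminorm from below by the sum of local ones, $\sumT|\bv|_{1+s,T}^2\le |\bv|_{1+s,\Omega}^2$, which is an immediate consequence of the definition of the Slobodeckij seminorm and the fact that the $T\times T$ region is contained in $\Omega\times\Omega$. With that in hand the global bound follows at once, which is precisely the step carried out in \cite{Luo2012,Zhang2023} that we invoke to close the proof.
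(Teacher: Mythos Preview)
Your argument is correct and is the standard route (polynomial preservation, affine map to the reference element, Bramble--Hilbert, scaling back, with the usual care for the fractional case). The paper itself does not give a proof of this lemma: it simply states the estimate and refers to \cite{Luo2012,Zhang2023}, so there is nothing to compare approach-wise; you have supplied what the paper omits.
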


%The following estimate is used in \cite{Lin2013a}, \cite{Luo2012} and \cite{Zhao2022}.
%\begin{lemma}
%	For any $\bv\in\bh^{1+s}(\Omega)$, there has
%	\begin{eqnarray}\label{errint}
%		||\bv-\bi_h\bv||+h|\bv-\bi_h\bv|_1\le Ch^{1+s}||\bv||_{1+s},
%	\end{eqnarray}
%\end{lemma}

%By Lemma2.2 in \cite{Bi2015}, there holds the following trace inequality.
%\begin{lemma}
%	For any $\bv\in\bh^{s}(T)(1/2\le s\le1)$, there has
%	\begin{eqnarray}\label{traceFE}
%		\|\bv\|_{\partial T}^2\lesssim h_T^{-1}||\bv||_T^2+h_T^{2s-1}|\bv|_{s,T}^2.
%	\end{eqnarray}
%\end{lemma}

Now we introduce the nonconforming CR finite element algorithm, which is based on \cite{Zhang2023}.
\begin{algorithm}Find $\gamma_h\in\Real$ and $\bu_h\in V_h$ such that $b_h(\bu_h,\bu_h)$\\$=1$ and
	\begin{eqnarray}\label{vartionh}
		a_h(\bu_h,\bv_h)=\gamma_hb_h(\bu_h,\bv_h),\quad\forall\bv_h\in V_h,
	\end{eqnarray}
\end{algorithm}
where
\begin{align*}
	a_h(\bu_h,\bv_h)=&2\mu\sumT(\eps(\bu_h),\eps(\bv_h))_T+\lambda\sumT(\di\bu_h,\di\bv_h)_T+\gamma(h)\sum\limits_{e\in\E_h^0}\frac{2\mu}{h_e}\la[\![\bu_h]\!],[\![\bv_h]\!]\rangle_e,\\
	b_h(\bu_h,\bv_h)=&\sumT(\bu_h,\bv_h)_T.
\end{align*}

For any $\bv_h\in V_h$, its norm $||\cdot||_h$ is defined by
\begin{align*}
	||\bv_h||_h^2=&2\mu\sumT||\eps(\bv_h)||_T^2+\lambda\sumT||\di\bv_h||_T^2+\sum\limits_{e\in\E_h^0}||{h_e}^{-\frac12}[\![\bv_h]\!]||_e^2,
\end{align*}
it is easy to know that $a_h(\cdot,\cdot)$ is continuous on $V_h$ and satisfies
\begin{eqnarray}\label{coeCR}
	a_h(\bv_h,\bv_h)&\gtrsim&\gamma(h)||\bv_h||_h^2,\quad\forall\bv_h\in V_h.
\end{eqnarray}

The following discrete Korn's inequality is proved in \cite{Korn2004} and \cite{Korn2000}.
\begin{lemma}For any $\bv\in V_h$, there holds
	\begin{align*}
		\sumT||\nabla\bv_h||_T^2\lesssim& \sumT||\eps(\bv_h)||_T^2+\sum\limits_{e\in\E_h^0}||h_e^{-{\frac{1}{2}}}[\![\bv_h]\!]||_e^2,
	\end{align*}
	which implies
	\begin{align*}
		\sumT|\bv_h|_{1,T}\lesssim& ||\bv_h||_h.
	\end{align*}
\end{lemma}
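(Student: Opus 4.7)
The plan is to prove the discrete Korn's inequality by the well-known enrichment/averaging strategy that reduces the bound to the classical continuous Korn's inequality, with the nonconformity measured entirely by the edge-jump terms. The second assertion of the lemma is then immediate from the first by the elementary pointwise identity $|\nabla \bv_h|^2 = 2|\eps(\bv_h)|^2 + 2|\mathrm{rot}\,\bv_h|^2$ (only the symmetric gradient is needed as an upper bound), so I focus on the first inequality.

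First, I would introduce a conforming companion space, for instance the continuous $P_1$ Lagrange space $V_h^c \subset \bh_E^1(\Omega)$ with the homogeneous Dirichlet condition imposed on $\Gamma_D$, and define an averaging (Oswald/Scott-Zhang-type) operator $E_h : V_h \to V_h^c$ by assigning to each interior Lagrange node the arithmetic average of the values of $\bv_h$ on the elements sharing that node, while enforcing $E_h \bv_h = \b0$ at the Lagrange nodes on $\Gamma_D$. The key approximation estimate I would establish is
\begin{eqnarray*}
\sumT \|\nabla(\bv_h - E_h \bv_h)\|_T^2 \;\lesssim\; \sum_{e\in\E_h^0} h_e^{-1} \|[\![\bv_h]\!]\|_e^2,
\end{eqnarray*}
obtained by passing to a reference element, expressing the difference $\bv_h - E_h\bv_h$ through nodal-value differences, bounding each such difference by the jump of $\bv_h$ across neighbouring edges (since the CR degrees of freedom control the edge-average jumps and $P_1$ jumps are determined by their edge averages), and summing up using shape-regularity and the scaling $\|\cdot\|_e \sim h_e^{1/2}$ on each edge.

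With this in hand, the main chain of inequalities is: apply the classical Korn inequality in $\bh_E^1(\Omega)$ to $E_h\bv_h$ to obtain $\|\nabla E_h \bv_h\|^2 \lesssim \|\eps(E_h \bv_h)\|^2$, then use the triangle inequality twice,
\begin{eqnarray*}
\sumT \|\nabla \bv_h\|_T^2 &\lesssim& \sumT \|\nabla (\bv_h - E_h\bv_h)\|_T^2 + \|\nabla E_h\bv_h\|^2,\\
\|\eps(E_h\bv_h)\|^2 &\lesssim& \sumT \|\eps(\bv_h)\|_T^2 + \sumT \|\nabla(\bv_h - E_h\bv_h)\|_T^2,
\end{eqnarray*}
and absorb the difference terms using the enrichment estimate above to conclude
\begin{eqnarray*}
\sumT \|\nabla \bv_h\|_T^2 \;\lesssim\; \sumT \|\eps(\bv_h)\|_T^2 + \sum_{e\in\E_h^0} h_e^{-1} \|[\![\bv_h]\!]\|_e^2.
\end{eqnarray*}
The second statement $\sumT |\bv_h|_{1,T} \lesssim \|\bv_h\|_h$ then follows directly from the definition of $\|\cdot\|_h$ and the fact that the stabilization coefficient $2\mu$ and the Lamé factor are positive constants.

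The hard part, and the only genuinely non-routine step, is establishing the jump estimate for the averaging operator, in particular handling nodes sitting on the interface of $\Gamma_D$ and $\Gamma_N$: one must ensure that setting $E_h\bv_h = \b0$ at a Dirichlet node does not introduce an uncontrolled defect, which requires using the boundary condition $\int_e \bv_h\,ds = \b0$ on $e\in\E_h^b\cap\Gamma_D$ to treat the boundary jump contribution on the same footing as the interior ones. Once this technical point is dispatched via a trace-scaling argument on the boundary edges, the argument closes. Since the result appears already in the cited literature, my write-up would mainly serve to isolate the dependence on $\Gamma_D$-$\Gamma_N$ mixed boundaries, which is the setting of interest here.
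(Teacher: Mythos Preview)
The paper does not prove this lemma at all: it is simply quoted from the literature with the sentence ``The following discrete Korn's inequality is proved in \cite{Korn2004} and \cite{Korn2000}.'' Your enrichment/averaging strategy is exactly the technique used in those references (Brenner's piecewise $H^1$ Korn inequalities), so your approach is both correct and the standard one; there is nothing to compare against in the paper itself.

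One small remark: your parenthetical about the identity $|\nabla\bv_h|^2 = 2|\eps(\bv_h)|^2 + 2|\mathrm{rot}\,\bv_h|^2$ is not needed (and not quite stated correctly). The second assertion follows from the first purely by inspection of the definition of $\|\cdot\|_h$: since $\mu$ is bounded below independently of $\lambda$, the right-hand side of the first inequality is dominated by $\|\bv_h\|_h^2$, and taking square roots gives the implication. No pointwise algebraic identity is involved.
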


Using the same arguments in \cite{Zhang2023} and together with \eqref{coeCR} we have the following estimates.
\begin{lemma}
	Let $(\gamma,\bu)$ and $(\gamma_h,\bu_h)$ be the $j-$th eigenpair of \eqref{vartion1} and \eqref{vartionh}, respectively, then
	\begin{align}
		|\gamma-\gamma_h|\lesssim&\gamma(h)^{-2}h^{2s},
	\end{align}
	and there exists eigenfunction $\bu$ corresponding to $\gamma$ satisfies
	\begin{align}
		||\bu-\bu_h||_h\lesssim& \gamma(h)^{-1}h^s,\label{errH1}\\
		||\bu-\bu_h||\lesssim& \gamma(h)^{-2}h^{2s},\label{errL2}.
	\end{align}
\end{lemma}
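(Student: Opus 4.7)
The plan is to adapt the spectral approximation argument of \cite{Zhang2023} to the present setting; the only structural change is that the coercivity bound \eqref{coeCR} for $a_h$ carries the factor $\gamma(h)$ coming from the jump penalty, and locking-freeness is preserved throughout by invoking Assumption (A0) to absorb every $\lambda\|\di\bu\|$ contribution.

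First, I would analyze the associated source problem: find $\bu\in\bh_E^1(\Omega)$ with $a(\bu,\bv)=b(\bf,\bv)$, and its CR discretization $\bu_h\in V_h$ with $a_h(\bu_h,\bv_h)=b_h(\bf,\bv_h)$. A Strang-type argument based on \eqref{coeCR} yields
\begin{equation*}
\|\bu-\bu_h\|_h\lesssim\gamma(h)^{-1}\Big(\inf_{\bv_h\in V_h}\|\bu-\bv_h\|_h+\sup_{\bw_h\in V_h}\frac{|a_h(\bu,\bw_h)-b(\bf,\bw_h)|}{\|\bw_h\|_h}\Big).
\end{equation*}
Choosing $\bv_h=\bi_h\bu$ reduces the best-approximation term to $\|\bu-\bi_h\bu\|_h\lesssim h^s$ via \eqref{errint} together with the CR commuting identity $\di(\bi_h\bu)=\Q_h(\di\bu)$ and Assumption (A0). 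The consistency term, after elementwise integration by parts, becomes a sum of edge integrals of $\sigma(\bu)\bn\cdot[\![\bw_h]\!]$; the zero-mean property of $[\![\bw_h]\!]$ on each edge combined with trace and interpolation inequalities bounds it by $h^s\|\bw_h\|_h$. Hence $\|\bu-\bu_h\|_h\lesssim\gamma(h)^{-1}h^s$. An Aubin-Nitsche duality step using an auxiliary source problem with right-hand side $\bu-\bu_h$ and $\bh^{1+s}$ regularity then gains one extra factor $h^s$ while costing one additional inverse of $\gamma(h)$, giving $\|\bu-\bu_h\|\lesssim\gamma(h)^{-2}h^{2s}$.

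Second, I would invoke the Babuska-Osborn spectral approximation theory for the solution operators $K$ and $K_h$ associated with the two source problems; $K$ is compact by elliptic regularity and $K_h$ has finite rank. The operator-norm estimates just derived, restricted to the eigenspace $R(E_\mu(K))$, transfer directly to the eigenfunction bounds \eqref{errH1} and \eqref{errL2}. For the eigenvalue error I would use an expansion identity analogous to Lemma \ref{lemma-expansionQhu} (with $\bi_h\bu$ playing the role of $Q_h\bu$) expressing $\gamma-\gamma_h$ through $a_h(\bu-\bu_h,\bu-\bu_h)$, $b_h(\bu-\bu_h,\bu-\bu_h)$, and a CR consistency residual; plugging in the estimates already obtained yields $|\gamma-\gamma_h|\lesssim\gamma(h)^{-2}h^{2s}$.

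The main obstacle is to track three quantitative dependencies at once: (i) the limited regularity $s\in(0,1]$ of the eigenfunction, which caps each approximation rate at $h^s$; (ii) the stabilization scale $\gamma(h)$, which enters inversely in both the primal Strang estimate and the dual step; and (iii) the Lam\'e parameter $\lambda$, which must never appear in any hidden constant. Point (iii) is the locking-free condition and relies crucially on Assumption (A0) together with the CR commutation $\di(\bi_h\bu)=\Q_h(\di\bu)$ in the treatment of the divergence contribution to the energy and consistency estimates.
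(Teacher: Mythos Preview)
Your proposal is correct and matches the paper's approach: the paper does not give an explicit proof but simply states that the estimates follow by ``the same arguments in \cite{Zhang2023} together with \eqref{coeCR}'', i.e., the standard Strang--Aubin--Nitsche--Babu\v{s}ka--Osborn pipeline with the coercivity constant replaced by $\gamma(h)$. Your outline spells out exactly this, including the CR commuting identity $\di(\bi_h\bu)=\Q_h(\di\bu)$ and the use of Assumption~(A0) to keep the constants $\lambda$-independent, which are the two ingredients ensuring the hidden constants do not blow up.
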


Next, we consider the lower bound property for the CR finite element method. The following result is a combination of Lemma 2.1 in \cite{Luo2012} and \eqref{coeCR}.
\begin{theorem}
	Let $(\gamma,\bu)$ and $(\gamma_h,\bu_h)$ be the $j-$th eigenpair of \eqref{vartion1} and \eqref{vartionh}, respectively. Then
	\begin{align}\label{expand}
		\gamma-\gamma_h\ge& \gamma(h)||\bu-\bu_h||_h^2-\gamma_h||\bi_h\bu-\bu_h||^2\\
		&+~\gamma_h(||\bi_h\bu||^2-||\bu||^2)+2a_h(\bu-\bi_h\bu,\bu_h).\nonumber
	\end{align}
\end{theorem}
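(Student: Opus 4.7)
The plan is to derive the identity by expanding $\gamma-\gamma_h$ as the difference of two Rayleigh quotients, both of which can be expressed through $a_h$. Since $\bu\in\bh_E^1(\Omega)$ has vanishing jumps across all interior edges and vanishes on $\Gamma_D$, the stabilization term in $a_h$ evaluated at $\bu$ disappears, so $a_h(\bu,\bu)=a(\bu,\bu)=\gamma$ by the normalization $b(\bu,\bu)=1$. Likewise $a_h(\bu_h,\bu_h)=\gamma_h b_h(\bu_h,\bu_h)=\gamma_h$. Subtracting and using the symmetric identity $a_h(\bu,\bu)-a_h(\bu_h,\bu_h)=a_h(\bu-\bu_h,\bu-\bu_h)+2a_h(\bu-\bu_h,\bu_h)$ gives the starting point.

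Next, I would split the cross term as $2a_h(\bu-\bu_h,\bu_h)=2a_h(\bu-\bi_h\bu,\bu_h)+2a_h(\bi_h\bu-\bu_h,\bu_h)$. Since $\bi_h\bu-\bu_h\in V_h$, the discrete eigenvalue equation \eqref{vartionh} applies and yields $a_h(\bi_h\bu-\bu_h,\bu_h)=\gamma_h b_h(\bi_h\bu-\bu_h,\bu_h)=\gamma_h(\bi_h\bu-\bu_h,\bu_h)$. A short polarization calculation based on $\|\bu_h\|^2=\|\bu\|^2=1$ then converts this into the closed form
\begin{equation*}
2\gamma_h(\bi_h\bu-\bu_h,\bu_h)=\gamma_h(\|\bi_h\bu\|^2-\|\bu\|^2)-\gamma_h\|\bi_h\bu-\bu_h\|^2,
\end{equation*}
which produces exactly the last three terms on the right-hand side of \eqref{expand}.

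Finally, to turn the identity into the advertised inequality, I would drop the cross term $a_h(\bu-\bu_h,\bu-\bu_h)$ using the coercivity \eqref{coeCR}. For this I need the coercivity to hold not only on $V_h$ but on the combined space $V=V_h+\bh_E^1(\Omega)$; this is delicate because the broken energy norm $\|\cdot\|_h$ includes a jump term and the stabilization in $a_h$ carries a factor $\gamma(h)$, whereas for $\bu\in\bh_E^1$ the jumps vanish. I expect the main obstacle to be making this coercivity step airtight with the precise constant $\gamma(h)$ as written, probably by exploiting $[\![\bu]\!]=0$ across every edge so that $[\![\bu-\bu_h]\!]=-[\![\bu_h]\!]$ and then comparing term by term; after that, everything reduces to bookkeeping of the identity derived above.
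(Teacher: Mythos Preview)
Your approach is correct and is essentially the one the paper takes: the paper simply cites Lemma~2.1 of \cite{Luo2012} for the identity
\[
\gamma-\gamma_h = a_h(\bu-\bu_h,\bu-\bu_h) - \gamma_h\|\bi_h\bu-\bu_h\|^2 + \gamma_h(\|\bi_h\bu\|^2-\|\bu\|^2) + 2a_h(\bu-\bi_h\bu,\bu_h),
\]
which is exactly the identity you derive by hand, and then applies the coercivity \eqref{coeCR} to the first term on the right.

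Your worry about coercivity on $V=V_h+\bh_E^1(\Omega)$ rather than only on $V_h$ is legitimate, but it dissolves by a direct term-by-term comparison of $a_h(\bv,\bv)$ and $\gamma(h)\|\bv\|_h^2$: since $\gamma(h)=h^\delta\le 1$, the strain and divergence parts of $a_h$ dominate $\gamma(h)$ times the corresponding parts of $\|\cdot\|_h^2$, and the jump contributions differ only by the fixed factor $2\mu$. Nothing in this argument uses membership in $V_h$, so \eqref{coeCR} extends (with the same hidden constant) to any $\bv\in V$; your observation that $[\![\bu]\!]=0$, while true, is not even needed here.
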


For the last term in \eqref{expand}, we have the following estimate.
\begin{theorem}
	Suppose that $\bu\in\bh^{1+s}(\Omega)$, we have
	\begin{align}\label{lb1}
		|a_h(\bu-\bi_h\bu,\bu_h)|\lesssim&h^{2s}.
	\end{align}
\end{theorem}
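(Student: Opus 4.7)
The plan is to reduce $a_h(\bu-\bi_h\bu,\bu_h)$ to its stabilization cross-term by the Crouzeix--Raviart orthogonality, and then bound that cross-term by Cauchy--Schwarz using the interpolation estimate \eqref{errint} and the a priori energy error estimate \eqref{errH1} for $\bu_h$.

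First, I split $a_h(\bu-\bi_h\bu,\bu_h)$ into its three defining pieces. Since $\bu_h|_T\in\bP_1(T)$, both $\eps(\bu_h)$ and $\di\bu_h$ are elementwise constants, so integrating by parts on each $T$ annihilates the interior volume contributions in the strain and divergence pieces and leaves only boundary integrals. On each edge $e\subset\partial T$ the quantities $\eps(\bu_h|_T)\bn_e$ and $\di\bu_h|_T$ are constants along $e$, whereas the CR DOF identity \eqref{CR-int} (together with $\bu|_{\Gamma_D}=\mathbf{0}$) gives $\int_e(\bu-\bi_h\bu)\,ds=\mathbf{0}$ on every $e\in\E_h$, so each of these boundary integrals vanishes individually. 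This reduces the identity to
\[
a_h(\bu-\bi_h\bu,\bu_h)=\gamma(h)\sum_{e\in\E_h^0}\frac{2\mu}{h_e}\langle[\![\bu-\bi_h\bu]\!],[\![\bu_h]\!]\rangle_e.
\]

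Applying Cauchy--Schwarz yields $|a_h(\bu-\bi_h\bu,\bu_h)|\lesssim\gamma(h)\,S_1^{1/2}S_2^{1/2}$, where $S_1:=\sum_{e\in\E_h^0}h_e^{-1}\|[\![\bu-\bi_h\bu]\!]\|_e^2$ and $S_2:=\sum_{e\in\E_h^0}h_e^{-1}\|[\![\bu_h]\!]\|_e^2$. An elementwise trace inequality combined with \eqref{errint} immediately gives $S_1\lesssim h^{2s}|\bu|_{1+s}^2$.

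The key step, and the one I expect to be the main technical obstacle, is to obtain a sharp bound on $S_2$. The crude estimate $S_2\lesssim\gamma(h)^{-1}$ coming directly from $a_h(\bu_h,\bu_h)=\gamma_h\lesssim 1$ would only produce $|a_h(\bu-\bi_h\bu,\bu_h)|\lesssim\gamma(h)^{1/2}h^s$, which is too weak. To do better I exploit the continuity of $\bu$: on every interior edge $[\![\bu_h]\!]=-[\![\bu-\bu_h]\!]$, and since the stabilization term appears inside the definition of $\|\cdot\|_h$ one has
\[
S_2=\sum_{e\in\E_h^0}h_e^{-1}\|[\![\bu-\bu_h]\!]\|_e^2\le\|\bu-\bu_h\|_h^2\lesssim\gamma(h)^{-2}h^{2s}
\]
by \eqref{errH1}. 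Assembling the three estimates then yields
\[
|a_h(\bu-\bi_h\bu,\bu_h)|\lesssim\gamma(h)\cdot h^s\cdot\gamma(h)^{-1}h^s=h^{2s},
\]
which is the claimed bound. The cancellation of $\gamma(h)$ with $\gamma(h)^{-1}$ is precisely what is enabled by routing the jump of $\bu_h$ through the error $\bu-\bu_h$ rather than through the raw stability bound.
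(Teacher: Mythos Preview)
Your proof is correct and follows essentially the same route as the paper: eliminate the strain and divergence pieces via the CR orthogonality \eqref{CR-int}, reduce to the stabilization cross-term, rewrite $[\![\bu_h]\!]=-[\![\bu-\bu_h]\!]$, and close with Cauchy--Schwarz together with \eqref{errint} and \eqref{errH1}. The only cosmetic difference is that the paper inserts edge-mean projections $P_e$ before applying Cauchy--Schwarz, but since $P_e[\![\bu-\bi_h\bu]\!]=P_e[\![\bu-\bu_h]\!]=\mathbf{0}$ anyway, this step is redundant and your direct estimate is equally valid (and slightly cleaner).
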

\begin{proof} It follows from the definition in \eqref{vartionh} that
	\begin{align*}
		a_h(\bu-\bi_h\bu,\bu_h)=&2\mu\sumT(\eps(\bu-\bi_h\bu),\eps(\bu_h))_T +\lambda\sumT(\di(\bu-\bi_h\bu),\di\bu_h)_T\\
		\quad&+\gamma(h)\sum\limits_{e\in\E_h^0}\frac{2\mu}{h_e}\langle[\![\bu-\bi_h\bu]\!],[\![\bu_h]\!]\rangle_e.
	\end{align*}
	By the Green formulation, we have
	\begin{align*}
		(\eps(\bu-\bi_h\bu),\eps(\bu_h))_T=&(\nabla(\bv-\bi_h\bv),\eps(\bu_h))_T
		\\
		=&\la\bv-\bi_h\bv,\eps(\bu_h)\bn\ra-(\bv-\bi_h\bv,\di(\eps(\bu_h)))_T,
	\end{align*}
	notice that $\eps(\bu_h)$ is a constant function, which together with \eqref{CR-int} verifies
	\begin{align*}
		(\eps(\bu-\bi_h\bu),\eps(\bu_h))_T=&0,\quad\forall T\in\T_h.
	\end{align*}
	Similarly, we get
	\begin{align*}
		(\di(\bu-\bi_h\bu),\di(\bu_h))_T=&0,\quad\forall T\in\T_h.
	\end{align*}
	Therefore, we have
	\begin{eqnarray}\label{ahM}
		a_h(\bu-\bi_h\bu,\bu_h)=\gamma(h)\sum\limits_{e\in\E_h^0}\frac{2\mu}{h_e}\langle[\![\bu-\bi_h\bu]\!],[\![\bu_h]\!]\rangle_e.
	\end{eqnarray}
	
	For $e\in\E_h^0$, define
	\begin{eqnarray*}
		P_e\bf=\frac{1}{|e|}\int_e\bf ds.
	\end{eqnarray*}
	
	 It is easy to check that $P_e[\![\bu_h]\!]=0$ and $[\![\bu]\!]=0$, then by Cauchy-Schwarz inequality, we derive
	\begin{align}\label{ahM2}
		|\langle[\![\bu-\bi_h\bu]\!],[\![\bu_h]\!]\rangle_e|=&
		|\langle[\![\bv]\!]-P_e[\![\bv]\!],[\![\bw]\!]\rangle_e|\nonumber
		\\
		=&|\langle[\![\bv]\!]-P_e[\![\bv]\!],[\![\bw]\!]-P_e[\![\bw]\!]\rangle_e|
		\\
		\lesssim&\big\|[\![\bv]\!]-P_e[\![\bv]\!]\big\|_e\big\|[\![\bw]\!]-P_e[\![\bw]\!]\big\|_e\nonumber,
	\end{align}
	where $\bv=\bu-\bi_h\bu$, $\bw=\bu-\bu_h$.
	
	Assume that $T^+,T^-\in\T_h$ such that $T^+\cap T^-=e$. Let $\bv^+=\bv|_{T^+},\bv^-=\bv|_{T^-}$. By the trace inequality and \eqref{errint}, we have
	\begin{align}\label{ahM3}
		\big\|[\![\bv]\!]-P_e[\![\bv]\!]\big\|_e=&
		\big\|(\bv^+-P_e\bv^+)-(\bv^--P_e\bv^-)\big\|_e\nonumber
		\\
		\lesssim&h^{\frac{1}{2}}|\bv|_{1,T^+\cup T^-}
		\\
		\lesssim& h^{\frac{1}{2}+s}|\bu|_{1+s,T^+\cup T^-}.\nonumber
	\end{align}
	
	Analogously, together with \eqref{errH1} we have
	\begin{align}\label{ahM4}
		\big\|[\![\bw]\!]-P_e[\![\bw]\!]\big\|_e
		\lesssim& \gamma(h)^{-1}h^{\frac{1}{2}+s}|\bu|_{1+s,T^+\cup T^-}.
	\end{align}
	
	Substituting \eqref{ahM2},\eqref{ahM3} and \eqref{ahM4} into \eqref{ahM} we get \eqref{lb1}, which completes the proof.
\end{proof}
	
\textbf{Assumption (A8)}\emph{ Let $(\gamma,\bu)$ and $(\gamma_h,\bu_h)$ be the $j-$th eigenpair of \eqref{vartion1} and \eqref{vartionh}, respectively, then there has
	\begin{eqnarray}\label{assumerr}
		||\bu-\bu_h||_h\gtrsim \gamma(h)^{-1}h^s.
\end{eqnarray}}
\begin{lemma}
	Let $(\gamma,\bu)$ and $(\gamma_h,\bu_h)$ be the $j-$th eigenpair of \eqref{vartion1} and \eqref{vartionh}, respectively. Assume that $\bu\in\bh^{1+s}(\Omega)$ with $0<s<1$ and \eqref{assumerr} holds, then when h is small enough, we have
	\begin{eqnarray*}
		\gamma_h\le\gamma.
	\end{eqnarray*}
\end{lemma}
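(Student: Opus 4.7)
The natural approach is to start from the expansion \eqref{expand} established in the preceding theorem and show that, under Assumption~(A8), the positive contribution $\gamma(h)\|\bu-\bu_h\|_h^2$ dominates the three possibly negative contributions in absolute value as $h\to 0$. Since $\gamma(h)=h^\delta$ with $\delta>0$ small, the lower bound from~(A8) gives
$$\gamma(h)\|\bu-\bu_h\|_h^2 \gtrsim \gamma(h)\cdot\gamma(h)^{-2}h^{2s}=h^{2s-\delta},$$
which will be the ``benchmark'' order against which the other terms must be compared.

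Next, I would dispatch the three remaining terms on the right-hand side of \eqref{expand} one at a time. For $\gamma_h\|\bi_h\bu-\bu_h\|^2$, I would insert $\pm\bu$ and use the triangle inequality together with \eqref{errint} and \eqref{errL2} to obtain
$$\|\bi_h\bu-\bu_h\|^2\lesssim\|\bu-\bi_h\bu\|^2+\|\bu-\bu_h\|^2\lesssim h^{2+2s}+\gamma(h)^{-4}h^{4s}.$$
For $\gamma_h(\|\bi_h\bu\|^2-\|\bu\|^2)$, I would use the algebraic identity
$$\|\bi_h\bu\|^2-\|\bu\|^2=-2(\bi_h\bu,\bu-\bi_h\bu)-\|\bu-\bi_h\bu\|^2,$$
apply Cauchy--Schwarz, and combine with $\|\bu\|=1$, $\|\bi_h\bu\|\lesssim 1$, and \eqref{errint} to get $|\|\bi_h\bu\|^2-\|\bu\|^2|\lesssim h^{1+s}$. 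The term $2a_h(\bu-\bi_h\bu,\bu_h)$ is bounded by $h^{2s}$ directly from \eqref{lb1}. Since $\gamma_h$ is bounded (it converges to $\gamma$), multiplying by $\gamma_h$ changes nothing.

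Finally, I would compare the orders. With $\gamma(h)=h^\delta$, the three negative terms are of order at most $h^{2+2s}$, $h^{4s-4\delta}$, $h^{1+s}$, and $h^{2s}$, while the benchmark is $h^{2s-\delta}$. The comparisons $h^{2+2s},h^{1+s},h^{2s}=o(h^{2s-\delta})$ are automatic for $\delta>0$ and $s\in(0,1)$, so the only constraint is $h^{4s-4\delta}=o(h^{2s-\delta})$, which amounts to $\delta<2s/3$. For such a small $\delta$, every negative term becomes strictly negligible with respect to $\gamma(h)\|\bu-\bu_h\|_h^2$, so \eqref{expand} yields $\gamma-\gamma_h\ge 0$ for $h$ sufficiently small.

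\textbf{Main obstacle.} The one spot requiring care is the term $\gamma_h\|\bi_h\bu-\bu_h\|^2$, whose contribution $\gamma(h)^{-4}h^{4s}$ blows up most rapidly in $\gamma(h)^{-1}$ among all the error terms; matching it against the benchmark $h^{2s-\delta}$ forces the stabilization exponent $\delta$ to be chosen small relative to the regularity index $s$. The other two negative terms are of substantially higher order in $h$ and present no difficulty, and the saturation bound \eqref{lb1} already packages the most delicate consistency estimate into a single clean ingredient.
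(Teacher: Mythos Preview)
Your proposal is correct and follows essentially the same route as the paper: start from the expansion \eqref{expand}, bound $\|\bi_h\bu-\bu_h\|$ via the triangle inequality with \eqref{errint} and \eqref{errL2}, bound $|\|\bi_h\bu\|^2-\|\bu\|^2|$ by $h^{1+s}$, invoke \eqref{lb1} for the consistency term, and then compare orders against $\gamma(h)\|\bu-\bu_h\|_h^2\gtrsim h^{2s-\delta}$. The only difference is that you make the constraint $\delta<2s/3$ explicit, whereas the paper simply assumes $\delta$ is ``small'' and leaves the order comparison implicit.
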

\begin{proof}
	For the second term in \eqref{expand}, it follows from \eqref{errint} and \eqref{errL2} that
	\begin{eqnarray}\label{lb2}
		||\bi_h\bu-\bu_h||\le ||\bi_h\bu-\bu||+||\bu-\bu_h||\lesssim \gamma(h)^{-2}h^{2s},
	\end{eqnarray}
	for the third term, we have
	\begin{eqnarray}\label{lb3}
		\big|||\bi_h\bu||^2-||\bu||^2\big|=\big|(\bi_h\bu-\bu,\bi_h\bu+\bu)\big|\lesssim ||\bi_h\bu-\bu||\lesssim h^{1+s}.
	\end{eqnarray}
	Substitute \eqref{lb1} and \eqref{assumerr}-\eqref{lb3} into \eqref{expand}, we derive, when h is small enough, 
	\begin{eqnarray*}
		\gamma-\gamma_h\ge0,
	\end{eqnarray*}
	The proof is finished.
\end{proof}
	
	\section{Numerical experiments}In this section, we show some numerical results to verify the analysis in previous sections. Since the exact eigenvalues are unknown, we compute the convergence rate by
\begin{eqnarray*}
	Order=\lg\left(\frac{\gamma_h-\gamma_{\frac{h}{2}}}{\gamma_{\frac{h}{2}}-\gamma_{\frac{h}{4}}}\right)/\lg2.
\end{eqnarray*}

\begin{example}
	Consider the linear elastic eigenvalue problem \eqref{eig} on unit square domain $\Omega=(0,1)^2$ with $\Gamma_N=\phi$. We set the Young's modulus $E$=1, the Poisson ratio $\nu$=0.49, 0.4999, 0.499999. We solve it by $WG$ method and report first 4 discrete eigenfrequencies $\omega_h=\sqrt{\gamma_h}$.
\end{example}
\begin{table}[H]
	\begin{center}
		\caption{k=1}\label{t1}
		\begin{tabular}{|c|c|c|c|c|c|}\hline
				$h$&1/16&1/32&1/64&1/128&Order\\ \hline\hline
		\multicolumn{6}{|c|}{$\nu=0.49$}\\ \hline
				$\omega_{1,h}$&4.133787 & 4.173779 & 4.184683 & 4.187561  &1.92\\ \hline
				$\omega_{2,h}$&5.355592 & 5.473026 & 5.505790 & 5.514496  &1.91\\ \hline
				$\omega_{3,h}$&5.362232 & 5.474950 & 5.506313 & 5.514637  &1.91\\ \hline
				$\omega_{4,h}$&6.335914 & 6.485768 & 6.528086 & 6.539371  &1.91\\ \hline
		\multicolumn{6}{|c|}{$\nu=0.4999$}\\ \hline
				$\omega_{1,h}$&4.122662 & 4.162466 & 4.173337 & 4.176208  &1.92\\ \hline
				$\omega_{2,h}$&5.378227 & 5.496428 & 5.529427 & 5.538199  &1.91\\ \hline
				$\omega_{3,h}$&5.385957 & 5.498625 & 5.530018 & 5.538357  &1.91\\ \hline
				$\omega_{4,h}$&6.330349 & 6.479817 & 6.522102 & 6.533389  &1.91\\ \hline
		\multicolumn{6}{|c|}{$\nu=0.499999$}\\ \hline
			$\omega_{1,h}$&4.122552 & 4.162354 & 4.173224 & 4.176095  &1.92\\ \hline
			$\omega_{2,h}$&5.378404 & 5.496609 & 5.529608 & 5.538381  &1.91\\ \hline
			$\omega_{3,h}$&5.386141 & 5.498807 & 5.530200 & 5.538539  &1.91\\ \hline
			$\omega_{4,h}$&6.330290 & 6.479754 & 6.522039 & 6.533326  &1.91\\ \hline
		\end{tabular}
	\end{center}
\end{table}

Table \ref{t1} and \ref{t2} show the approximate eigenvalues solved by WG method with $k$=1, 2 as the Poisson ratio $\nu$ turns to $\frac{1}{2}$, respectively. As the numerical results represented in the tables, which implies that the eigenfunctions have at least $H^3$-regularity, we can see the WG method not only is locking-free and satisfies the lower bound property, but also has the convergence of $2(k-\delta)$ for eigenvalues approximately, which coincides with our arguments.

\begin{table}[H]
	\begin{center}
		\caption{k=2}\label{t2}
		\begin{tabular}{|c|c|c|c|c|c|}\hline
				$h$&1/16&1/32&1/64&1/128&order\\ \hline\hline
			\multicolumn{6}{|c|}{$\nu=0.49$}\\ \hline
				$\omega_{1,h}$&4.188174 & 4.188547 & 4.188575 & 4.188577 & 3.84 \\  \hline
				$\omega_{2,h}$&5.515875 & 5.517456 & 5.517573 & 5.517581 & 3.85 \\  \hline
				$\omega_{3,h}$&5.515933 & 5.517465 & 5.517573 & 5.517581 & 3.86 \\  \hline
				$\omega_{4,h}$&6.540041 & 6.543122 & 6.543346 & 6.543361 & 3.85 \\  \hline
			\multicolumn{6}{|c|}{$\nu=0.4999$}\\ \hline
				$\omega_{1,h}$&4.176809 & 4.177191 & 4.177219 & 4.177221 & 3.84 \\\hline
				$\omega_{2,h}$&5.539523 & 5.541177 & 5.541299 & 5.541308 & 3.86 \\\hline
				$\omega_{3,h}$&5.539588 & 5.541187 & 5.541300 & 5.541308 & 3.88 \\\hline
				$\omega_{4,h}$&6.533997 & 6.537136 & 6.537365 & 6.537381 & 3.85 \\\hline
			\multicolumn{6}{|c|}{$\nu=0.499999$}\\ \hline
			$\omega_{1,h}$&4.176697 & 4.177078 & 4.177107 & 4.177109 & 3.65 \\
			\hline
			$\omega_{2,h}$&5.539704 & 5.541359 & 5.541481 & 5.541489 & 3.82 \\
			\hline
			$\omega_{3,h}$&5.539769 & 5.541369 & 5.541482 & 5.541489 & 3.84 \\
			\hline
			$\omega_{4,h}$&6.533934 & 6.537073 & 6.537302 & 6.537318 & 3.83 \\ \hline
		\end{tabular}
	\end{center}
\end{table}

\begin{example}
	Consider the linear elastic eigenvalue problem \eqref{eig} on L-shaped  domain $\Omega=(0,2)^2/(1,2)^2$ with $\Gamma_N=\phi$. We set the Young's modulus $E$=1, the poisson ratio $\nu$=0.49, 0.4999, 0.499999 and $\gamma(h)=h^{0.05}$. We solve it by $WG$ method and report first four discrete eigenfrequencies $\omega_h=\sqrt{\gamma_h}$.
\end{example}

\begin{table}[H]
	\begin{center}
		\caption{k=1}\label{t3}
		\begin{tabular}{|c|c|c|c|c|c|}\hline
				$h$&1/16&1/32&1/64&1/128&order\\ \hline\hline
				\multicolumn{6}{|c|}{$\nu=0.49$}\\ \hline
				$\omega_{1,h}$&3.216685 & 3.250269 & 3.261744 & 3.265905 & 1.46 \\\hline
				$\omega_{2,h}$&3.449111 & 3.491878 & 3.503957 & 3.507309 & 1.85 \\ \hline
				$\omega_{3,h}$&3.669315 & 3.704210 & 3.713763 & 3.716342 & 1.89 \\ \hline
				$\omega_{4,h}$&3.989626 & 4.028245 & 4.038850 & 4.041671 & 1.91 \\ \hline
			\multicolumn{6}{|c|}{$\nu=0.4999$}\\ \hline
				$\omega_{1,h}$&3.219344 & 3.253587 & 3.265390 & 3.269713 & 1.45 \\ \hline
				$\omega_{2,h}$&3.453213 & 3.496018 & 3.508099 & 3.511447 & 1.85 \\ \hline
				$\omega_{3,h}$&3.688787 & 3.725146 & 3.735136 & 3.737847 & 1.88 \\ \hline
				$\omega_{4,h}$&3.987055 & 4.026136 & 4.036903 & 4.039774 & 1.91 \\ \hline
		\multicolumn{6}{|c|}{$\nu=0.499999$}\\ \hline
			$\omega_{1,h}$&3.219356 & 3.253605 & 3.265411 & 3.269735 & 1.45 \\ \hline
			$\omega_{2,h}$&3.453240 & 3.496046 & 3.508126 & 3.511474 & 1.85 \\ 
			\hline
			$\omega_{3,h}$&3.688839 & 3.725204 & 3.735196 & 3.737907 & 1.88 \\ \hline
			$\omega_{4,h}$&3.987021 & 4.026106 & 4.036875 & 4.039746 & 1.91 \\ \hline
		\end{tabular}
	\end{center}
\end{table}

Table \ref{t3} and \ref{t4} show the approximate eigenvalues solved by WG method with $k$=1 and 2 as the Poisson ratio $\nu$ turns to $\frac{1}{2}$, respectively. Since the domain is not convex, the eigenfunctions of this problem are sigular. According to \cite{Inzunza2023}, the theoretical convergence order of eigenvalues is $2s\ge1.08$. As we can see from the tables, WG method is locking-free and has the convergence of $\min{2s,2k}-2\delta$. Furthermore, it provides the lower bounds for eigenvalues, which means WG method work well in unconvex domain.

%\begin{comment}
\begin{table}[H]
	\begin{center}
		\caption{k=2}\label{t4}
		\begin{tabular}{|c|c|c|c|c|c|}\hline
			$h$&1/8&1/16&1/32&1/64&order\\ \hline\hline
			\multicolumn{6}{|c|}{$\nu=0.49$}\\ \hline
			$\omega_{1,h}$&3.256093 & 3.263613 & 3.266413 & 3.267662 & 1.43 \\
			 \hline
			$\omega_{2,h}$&3.502326 & 3.507443 & 3.508310 & 3.508517 & 2.56 \\
			 \hline
			$\omega_{3,h}$&3.712912 & 3.716691 & 3.717155 & 3.717264 & 3.03 \\
			 \hline
			$\omega_{4,h}$&4.037738 & 4.042203 & 4.042619 & 4.042668 & 3.43 \\
			 \hline
		\multicolumn{6}{|c|}{$\nu=0.4999$}\\ \hline
			$\omega_{1,h}$&3.259411 & 3.267276 & 3.270230 & 3.271556 & 1.41 \\
			 \hline
			$\omega_{2,h}$&3.506429 & 3.511590 & 3.512451 & 3.512651 & 2.58 \\
			\hline
			$\omega_{3,h}$&3.733890 & 3.738159 & 3.738692 & 3.738820 & 3.00 \\ \hline
			$\omega_{4,h}$&4.035628 & 4.040289 & 4.040734 & 4.040789 & 3.39 \\
			\hline
		\multicolumn{6}{|c|}{$\nu=0.499999$}\\ \hline
			$\omega_{1,h}$&3.259428 & 3.267296 & 3.270252 & 3.271578 & 1.41 \\ \hline
			$\omega_{2,h}$&3.506456 & 3.511617 & 3.512478 & 3.512678 & 2.58 \\ 
			\hline
			$\omega_{3,h}$&3.733948 & 3.738219 & 3.738752 & 3.738881 & 3.00 \\
			\hline
			$\omega_{4,h}$&4.035598 & 4.040261 & 4.040706 & 4.040762 & 3.39 \\
		    \hline
		\end{tabular}
	\end{center}
\end{table}
%\end{comment}

\begin{example}
	Consider the linear elastic eigenvalue problem \eqref{eig} on unit square domain $\Omega=(0,1)^2$ with $\Gamma_D=\{(x,0):0\le x\le1\}$. We set the Young's modulus $E$=1, the poisson ratio $\nu$=0.49, 0.4999, 0.499999 and $\gamma(h)=h^{0.05}$. We solve it by $WG$ and $CR$ method and report first four discrete eigenfrequencies $\omega_h=\sqrt{\gamma_h}$.
\end{example}
\begin{table}[H]
	\begin{center}
		\caption{WG method, k=1}\label{t5}
		\begin{tabular}{|c|c|c|c|c|c|}\hline
				$h$&1/32&1/64&1/128&1/256&order\\ \hline\hline
		\multicolumn{6}{|c|}{$\nu=0.49$}\\ \hline		
				$\omega_{1,h}$&0.693362 & 0.696795 & 0.698316 & 0.698991 & 1.17 \\ \hline
				$\omega_{2,h}$&1.826880 & 1.832813 & 1.835304 & 1.836371 & 1.22 \\ \hline
				$\omega_{3,h}$&1.856009 & 1.859558 & 1.860488 & 1.860730 & 1.95 \\ \hline
				$\omega_{4,h}$&2.909418 & 2.921006 & 2.925020 & 2.926513 & 1.43 \\ \hline
		\multicolumn{6}{|c|}{$\nu=0.4999$}\\ \hline
				$\omega_{1,h}$&0.695084 & 0.698669 & 0.700272 & 0.700988 & 1.16 \\ \hline
				$\omega_{2,h}$&1.837516 & 1.843765 & 1.846407 & 1.847548 & 1.21 \\ \hline
				$\omega_{3,h}$&1.860667 & 1.864282 & 1.865232 & 1.865478 & 1.94 \\ \hline
				$\omega_{4,h}$&2.904148 & 2.915832 & 2.919931 & 2.921476 & 1.41 \\ \hline
		\multicolumn{6}{|c|}{$\nu=0.499999$}\\ \hline
			$\omega_{1,h}$&0.695101 & 0.698689 & 0.700293 & 0.701006 & 1.16 \\ \hline
			$\omega_{2,h}$&1.837623 & 1.843875 & 1.846519 & 1.847660 & 1.21 \\ \hline
			$\omega_{3,h}$&1.860716 & 1.864331 & 1.865281 & 1.865527 & 1.94 \\ \hline
			$\omega_{4,h}$&2.904096 & 2.915782 & 2.919882 & 2.921427 & 1.41 \\ \hline
		\end{tabular}
	\end{center}
\end{table}
\begin{table}[H]
	\begin{center}
		\caption{CR method}\label{t6}
		\begin{tabular}{|c|c|c|c|c|c|}\hline
			$h$&1/32&1/64&1/128&1/256&order\\ \hline\hline
			\multicolumn{6}{|c|}{$\nu=0.49$}\\ \hline
			$\omega_{1,h}$&0.695688 & 0.697821 & 0.698771 & 0.699193 & 1.17 \\ \hline
			$\omega_{2,h}$&1.831372 & 1.834622 & 1.836057 & 1.836693 & 1.17 \\ \hline
			$\omega_{3,h}$&1.860071 & 1.860625 & 1.860764 & 1.860801 & 1.94 \\ \hline
			$\omega_{4,h}$&2.921482 & 2.924809 & 2.926307 & 2.926982 & 1.15 \\ \hline
		\multicolumn{6}{|c|}{$\nu=0.4999$}\\ \hline
			$\omega_{1,h}$&0.697483 & 0.699738 & 0.700750 & 0.701202 & 1.16 \\ \hline
			$\omega_{2,h}$&1.842195 & 1.845662 & 1.847203 & 1.847892 & 1.16 \\ \hline
			$\omega_{3,h}$&1.864782 & 1.865366 & 1.865513 & 1.865551 & 1.95 \\ \hline
			$\omega_{4,h}$&2.916171 & 2.919657 & 2.921241 & 2.921961 & 1.14 \\ \hline
		\multicolumn{6}{|c|}{$\nu=0.499999$}\\ \hline
			$\omega_{1,h}$&0.697501 & 0.699758 & 0.700771 & 0.701230 & 1.15 \\ \hline
			$\omega_{2,h}$&1.842304 & 1.845773 & 1.847314 & 1.848006 & 1.16 \\ \hline
			$\omega_{3,h}$&1.864831 & 1.865415 & 1.865561 & 1.865602 & 1.89 \\ \hline
			$\omega_{4,h}$&2.916119 & 2.919607 & 2.921191 & 2.921913 & 1.14 \\ \hline
		\end{tabular}
	\end{center}
\end{table}

Table \ref{t5} present the results provided by WG method with $k$=1 in convex domain with mixed boundary, while talbe \ref{t6} show the results by CR method. According to \cite{Meddahi2013}, some eigenfunctions are sigular. The theoretical convergence order of eigenvalues is $2s\ge1.20$ when $\nu=0.49$, for example. It can be seen from the tables, WG method and CR method are both  locking-free and able to provide the lower bound for eigenvalues.
	
	\section{Appendix}In this section, we give a standard error analysis for the weak Galerkin scheme \eqref{sourceh}.

Consider the linear elasticity equation:
\begin{align}
	\left\{\begin{array}{rcl}
		-\nabla\cdot\sigma(\bu)&=&\bf,~~\text{in}\quad\Omega,\\
		\bu&=&\b0,~~\text{on}\quad\Gamma_D\subset\partial\Omega,\\
		\sigma(\bu)\bn&=&\b0,~~\text{on}\quad\Gamma_N\subset\partial\Omega,
	\end{array}\right.
\end{align}
where $\bf\in\bl^2(\Omega)$ is a given function. It has been proved in \cite{Bren1992} that when $\Gamma_N=\phi$ and $\Omega$ is convex, the solution $\bu$ satisfies
\begin{eqnarray*}
	\|\bu\|_2+\lambda\|\di\bu\|_1\lesssim \|\bf\|,
\end{eqnarray*}
where the hidden constant $C$ is independent of $\lambda$.

Furthermore, the following elliptic regularity estimate is used in \cite{Han2002}.
\begin{eqnarray}\label{reg}
	\|\bu\|_{k+1}+\lambda\|\di\bu\|_k\lesssim \|\bf\|_{k-1}.
\end{eqnarray}

Suppose $\bu$ is the solution of \eqref{source},and $\bu_h$ is the numerical solution of \eqref{sourceh}. Denote by $\be_h$ the error that
\begin{center}
	$\be_h=Q_h\bu-\bu_h=\{Q_0\bu-\bu_0,Q_b\bu-\bu_b\}$.
\end{center}
Then we have the following error equation.
\begin{lemma}
	For the error $\be_h$ defined above, we have
	\begin{eqnarray}\label{errequ}
		a_w(\be_h,\bv_h)=\varphi(\bu,\bv_h)+\xi(\bu,\bv_h)+s(Q_h\bu,\bv_h),\quad\forall \bv_h\in V_h,
	\end{eqnarray}
	where
	\begin{align*}
		\varphi(\bu,\bv_h)&=2\mu\sum\limits_{T\in\mathcal{T}_h}\la \bv_0-\bv_b,(\varepsilon(\bu)-\dQ_h(\varepsilon(\bu)))\bn\ra,
		\\
		\xi(\bu,\bv_h)&=\lambda\sum\limits_{T\in\mathcal{T}_h}\la (\bv_0-\bv_b)\cdot\bn,\di\bu-\Q_h(\di\bu)\ra.
	\end{align*}
\end{lemma}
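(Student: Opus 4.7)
The plan is to take the WG scheme $a_w(\bu_h,\bv_h)=b_w(\bf,\bv_0)$, derive an equivalent boundary-term expression for $a_w(Q_h\bu,\bv_h)$ from the strong equation $-\nabla\cdot\sigma(\bu)=\bf$, and read off the difference. First I would use the commutativity properties from Lemma \ref{exchange} to rewrite
\begin{equation*}
a_w(Q_h\bu,\bv_h)=2\mu\sumT(\dQ_h(\eps(\bu)),\eps_w(\bv_h))_T+\lambda\sumT(\Q_h(\di\bu),\nabla_w\cdot\bv_h)_T+s(Q_h\bu,\bv_h).
\end{equation*}

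Next, I would peel the weak operators off of $\bv_h$ via their defining identities \eqref{defw1}, \eqref{defw3} combined with a classical integration by parts on each $T$. For any symmetric $q\in[P_{k-1}(T)]^{2\times 2}$ this gives
\begin{equation*}
(\eps_w(\bv_h),q)_T=(\eps(\bv_0),q)_T-\la \bv_0-\bv_b,q\bn\ra,
\end{equation*}
and analogously $(\nabla_w\cdot\bv_h,\tau)_T=(\di\bv_0,\tau)_T-\la(\bv_0-\bv_b)\cdot\bn,\tau\ra$ for $\tau\in P_{k-1}(T)$. Applied with $q=\dQ_h(\eps(\bu))$ and $\tau=\Q_h(\di\bu)$, together with the projection property (note $\eps(\bv_0),\di\bv_0$ are polynomials of degree $\le k-1$, so $\dQ_h$ and $\Q_h$ can be dropped from the volume term), this converts the volume part into $2\mu\sumT(\eps(\bu),\eps(\bv_0))_T+\lambda\sumT(\di\bu,\di\bv_0)_T$ plus edge terms involving $\dQ_h(\eps(\bu))$ and $\Q_h(\di\bu)$.

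In parallel, I would test the strong form $-\nabla\cdot\sigma(\bu)=\bf$ against $\bv_0$ on each $T$ and integrate by parts, producing
\begin{equation*}
b_w(\bf,\bv_0)=2\mu\sumT(\eps(\bu),\eps(\bv_0))_T+\lambda\sumT(\di\bu,\di\bv_0)_T-\sumT\la\bv_0,\sigma(\bu)\bn\ra.
\end{equation*}
Subtracting the two expressions cancels the volume pieces and leaves only boundary contributions. The final bookkeeping step is to rewrite $\sumT\la\bv_0,\sigma(\bu)\bn\ra$ as $\sumT\la\bv_0-\bv_b,\sigma(\bu)\bn\ra$, which is legitimate because the single-valued trace $\bv_b$ cancels across each interior edge (outward normals flip), vanishes on $\Gamma_D$, and is paired with $\sigma(\bu)\bn=0$ on $\Gamma_N$. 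Splitting $\sigma(\bu)=2\mu\eps(\bu)+\lambda(\di\bu)\bi$ and combining with the projection edge terms from step two produces exactly $\varphi(\bu,\bv_h)+\xi(\bu,\bv_h)$. Since $a_w(\bu_h,\bv_h)=b_w(\bf,\bv_0)$, the left side becomes $a_w(Q_h\bu-\bu_h,\bv_h)=a_w(\be_h,\bv_h)$, completing \eqref{errequ}.

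The main obstacle is purely accounting: keeping track of signs under integration by parts and correctly using the normal-flux continuity of $\sigma(\bu)\bn$ together with the boundary conditions to legitimately insert $-\bv_b$ alongside $\bv_0$. Everything else is algebraic manipulation built on Lemma \ref{exchange} and the $L^2$-projection properties, so no new functional-analytic input is required.
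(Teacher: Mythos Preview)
Your proposal is correct and follows essentially the same route as the paper: both use the commutativity identities of Lemma~\ref{exchange} together with the defining relations \eqref{defw1}--\eqref{defw3} to unpack $a_w(Q_h\bu,\bv_h)$ into volume terms $2\mu(\eps(\bu),\eps(\bv_0))+\lambda(\di\bu,\di\bv_0)$ plus projection-type edge terms, then identify the volume piece with $(\bf,\bv_0)+\sumT\la\bv_0-\bv_b,\sigma(\bu)\bn\ra$ and subtract the discrete scheme. The only cosmetic difference is that you spell out explicitly the elementwise integration-by-parts against the strong equation and the cancellation of the $\bv_b$ contribution across interior edges and on $\Gamma_D\cup\Gamma_N$, whereas the paper compresses this into the single phrase ``it follows from \eqref{source}''; your extra detail is a genuine clarification, not a different argument.
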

\begin{proof}
	From the definition \eqref{defw1}-\eqref{defw2} and Lemma \ref{exchange}, there holds on each element $T\in\mathcal{T}_h$ that
	\begin{align}\label{1}
			(\varepsilon_w(Q_h\bu),\varepsilon_w(\bv_h))_T=&
			(\dQ_h(\varepsilon(u)),\varepsilon_w(\bv_h))_T\nonumber
			\\
			=&(\dQ_h(\varepsilon(\bu)),\varepsilon(\bv_0))_T+\la \bv_b-\bv_0,\dQ_h(\varepsilon(\bu))\bn\ra
			\\
			=&(\varepsilon(\bu),\varepsilon(\bv_0))_T+\la \bv_b-\bv_0,\dQ_h(\varepsilon(\bu))\bn\ra.\nonumber
	\end{align}	
	Similarly, we have
	\begin{align}\label{2}
			(\nabla_w\cdot Q_h\bu,\nabla_w\cdot\bv_h)_T=&
			(\Q_h(\di\bu),\nabla_w\cdot\bv_h)_T\nonumber
			\\
			=&(\Q_h(\di\bu),\di\bv_0)_T+\la (\bv_b-\bv_0)\cdot\bn,\Q_h(\di\bu)\ra
			\\
			=&(\di\bu,\di\bv_0)_T+\la (\bv_b-\bv_0)\cdot\bn,\Q_h(\di\bu)\ra.\nonumber
	\end{align}
	Summing over all elements and it follows from \eqref{source} that
	\begin{align*}
		a_w(Q_h\bu,\bv_h)=&
		2\mu(\varepsilon_w (Q_h\bu),\varepsilon_w(\bv_h))+\lambda(\nabla_w\cdot Q_h\bu,\nabla_w\cdot\bv_h)+s(Q_h\bu,\bv_h)
		\\
		=&
		2\mu(\varepsilon(\bu),\varepsilon(\bv_0))+\lambda(\di\bu,\di\bv_0)
		+2\mu\la \bv_b-\bv_0,\dQ_h(\varepsilon(\bu))\bn\ra
		\\
		\quad&+\lambda\la (\bv_b-\bv_0)\cdot\bn,\Q_h(\di\bu)\ra+s(Q_h\bu,\bv_h)
		\\
		=&(\bf,\bv_0)+\varphi(\bu,\bv_h)+\xi(\bu,\bv_h)+s(Q_h\bu,\bv_h),
	\end{align*}
	which combines with $\eqref{sourceh}$ completes the proof.
\end{proof}

Then we give the estimates for $\varphi(\bu,\bv_h)$, $\xi(\bu,\bv_h)$ and $s(Q_h\bu,\bv_h)$ as follows.
\begin{lemma}$\label{errest}$
	Suppose that $\bu\in \bh^{k+1}(\Omega)$ and \eqref{reg} holds, then for any $\bv_h\in V_h$, we have
	\begin{align*}
		|\varphi(\bu,\bv_h)|\lesssim& h^k\|\bf\|_{k-1}\|\bv_h\|_V,\\
		|\xi(\bu,\bv_h)|\lesssim&h^k\|\bf\|_{k-1}\|\bv_h\|_V,\\
		|s(Q_h\bu,\bv_h)|\lesssim&\gamma(h)h^k\|\bf\|_{k-1}\|\bv_h\|_V.
	\end{align*}
\end{lemma}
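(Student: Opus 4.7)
The plan is to bound each of the three terms $\varphi(\bu,\bv_h)$, $\xi(\bu,\bv_h)$, and $s(Q_h\bu,\bv_h)$ by applying Cauchy--Schwarz edgewise with balancing weights $h_T^{\pm 1/2}$ so that the jump factor $\|\bv_0-\bv_b\|_{\partial T}$ is absorbed into the $\|\bv_h\|_V$-norm via its third component $\sum_T h_T^{-1}\|\bv_0-\bv_b\|_{\partial T}^2$. The remaining factor, a boundary norm of a projection error, will be handled by the trace inequality followed by the projection estimates in Lemma \ref{projection}.

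For $\varphi(\bu,\bv_h)$: after Cauchy--Schwarz I get a product of $\|\bv_h\|_V$ with $\bigl(\sum_T h_T\|\varepsilon(\bu)-\dQ_h(\varepsilon(\bu))\|_{\partial T}^2\bigr)^{1/2}$. Applying the trace inequality and Lemma \ref{projection} to $\varepsilon(\bu)$ yields $\|\varepsilon(\bu)-\dQ_h(\varepsilon(\bu))\|_{\partial T}^2 \lesssim h_T^{2k-1}\|\bu\|_{k+1}^2$, so the bound becomes $h^k\|\bu\|_{k+1}\|\bv_h\|_V$, and the elliptic regularity \eqref{reg} converts $\|\bu\|_{k+1}$ into $\|\bf\|_{k-1}$.

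For $\xi(\bu,\bv_h)$: the argument mirrors the $\varphi$ case, but now carries a $\lambda$ factor. After Cauchy--Schwarz with weights $h_T^{\pm 1/2}$, the analogous trace estimate plus Lemma \ref{projection} (applied with $\tau=\di\bu\in H^k$) gives $\|\di\bu-\Q_h(\di\bu)\|_{\partial T}^2 \lesssim h_T^{2k-1}\|\di\bu\|_k^2$. The crucial step, and the main obstacle, is to avoid spoiling the locking-free character: I absorb the $\lambda$ using the locking-free regularity bound $\lambda\|\di\bu\|_k \lesssim \|\bf\|_{k-1}$ from \eqref{reg}, producing $h^k\|\bf\|_{k-1}\|\bv_h\|_V$. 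Without this sharp regularity estimate, a naive bound would produce $\lambda\|\bu\|_{k+1}$ and the constant would blow up as $\nu\to 1/2$.

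For $s(Q_h\bu,\bv_h)$: I decompose $Q_0\bu-Q_b\bu = (Q_0\bu-\bu) + (\bu-Q_b\bu)$. Since $Q_b$ is the $L^2$ projection onto $\bP_k(e)$ and $Q_0\bu|_e\in\bP_k(e)$, the best-approximation property gives $\|\bu-Q_b\bu\|_{\partial T}\le \|\bu-Q_0\bu\|_{\partial T}$. The trace inequality together with Lemma \ref{projection} then yields $\|Q_0\bu-Q_b\bu\|_{\partial T}^2 \lesssim h_T^{2k+1}\|\bu\|_{k+1}^2$. Applying Cauchy--Schwarz to $s(Q_h\bu,\bv_h)$ and invoking \eqref{reg} gives the advertised $\gamma(h)h^k\|\bf\|_{k-1}\|\bv_h\|_V$. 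The factor $\gamma(h)$ comes directly from the definition of $s(\cdot,\cdot)$, and no further work is needed.
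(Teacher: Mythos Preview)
Your proposal is correct and follows essentially the same approach as the paper: edgewise Cauchy--Schwarz with weights $h_T^{\pm 1/2}$, the trace inequality, the projection estimates of Lemma~\ref{projection}, and finally the regularity bound \eqref{reg} (in particular $\lambda\|\di\bu\|_k\lesssim\|\bf\|_{k-1}$ for the $\xi$ term). The only cosmetic difference is in the $s$ term, where the paper writes $Q_0\bu-Q_b\bu=Q_b(Q_0\bu-\bu)$ and uses $\|Q_b(\cdot)\|_e\le\|\cdot\|_e$, while you decompose $Q_0\bu-Q_b\bu=(Q_0\bu-\bu)+(\bu-Q_b\bu)$ and invoke best approximation; both routes reduce to $\|\bu-Q_0\bu\|_{\partial T}$ and yield the same bound.
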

\begin{proof}
	By the Cauchy-Schwarz inequality, the trace inequality and Lemma \ref{projection}, for the first inequality we arrive at
	\begin{align*}
		|\varphi(\bu,\bv_h)|=&
		2\mu\left|\sumT\la \bv_b-\bv_0,(\dQ_h(\varepsilon(\bu))-\varepsilon(\bu))\bn\ra\right|
		\\
		\lesssim& 
		\left(\sumT h_T||\mathbb{Q}_h(\varepsilon(\bu))-\varepsilon(\bu)||_{\partial T}^2\right)^{\frac12}\left(\sumT h_T^{-1}||\bv_0-\bv_b||_{\partial T}^2\right)^{\frac12}
		\\
		\lesssim& 
		h^k\|\bu\|_{k+1}\|\bv_h\|_V.
	\end{align*}
	Similarly, for the second inequality, we have
	\begin{align*}
		|\xi(\bu,\bv_h)|=&
		\lambda\left|\sumT\la (\bv_b-\bv_0)\cdot\bn,\Q_h(\di\bu)-\di\bu\ra\right|
		\\
		\lesssim&
		\lambda\left(\sumT h_T||\Q_h(\di\bu)-\di\bu||_{\partial T}^2\right)^{\frac12}\left(\sumT h_T^{-1}||\bv_0-\bv_b||_{\partial T}^2\right)^{\frac12}
		\\
		\lesssim& 
		h^k(\lambda\|\di\bu\|_k)\|\bv_h\|_V.
	\end{align*}
	As to the last inequality, we again use the Cauchy-Schwarz inequality, the trace inequality and Lemma \ref{projection} to verify
	\begin{align*}
	|s(Q_h\bu,\bv_h)|=&
	\gamma(h)\left|\sumT h_T^{-1}\la Q_b(Q_b\bu-Q_0\bu),Q_b (\bv_0-\bv_b)\ra\right|
	\\
	=&
	\gamma(h)\left|\sumT h_T^{-1}\la Q_b(\bu-Q_0\bu),Q_b (\bv_0-\bv_b)\ra\right|
	\\
	\lesssim&
	\gamma(h)\left(\sumT h_T||\bu-Q_0\bu||_{\partial T}^2\right)^{\frac12}\left(\sumT h_T^{-1}||\bv_0-\bv_b||_{\partial T}^2\right)^{\frac12}
	\\
	\lesssim&
	\gamma(h)h^k\|\bu\|_{k+1}\|\bv_h\|_V,
\end{align*}
    Substitute \eqref{reg} into the estimates above completes the proof.
\end{proof}

With the error equation $\eqref{errequ}$ and the estimates derived in Lemma $\ref{errest}$, we get the following error estimate for the weak Galerkin method.
\begin{lemma}\label{Vnorm}
	Assume the exact solution $\bu\in\bh^{k+1}(\Omega)$, then the following estimate hold true,
	\begin{align}
		||\bu-\bu_h||_V\lesssim\gamma(h)^{-1}h^k\|\bf\|_{k-1}.
	\end{align}
\end{lemma}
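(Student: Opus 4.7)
The plan is to bound the projection-based error $\be_h = Q_h\bu - \bu_h$ in $\|\cdot\|_V$ by combining the coercivity bound of Lemma \ref{coe} with the error equation \eqref{errequ} and the consistency estimates of Lemma \ref{errest}, and then to conclude by the triangle inequality using the projection approximation bound from Lemma \ref{delta}.

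First I would test the error equation \eqref{errequ} against $\bv_h = \be_h \in V_h$, yielding
\begin{equation*}
a_w(\be_h,\be_h) = \varphi(\bu,\be_h) + \xi(\bu,\be_h) + s(Q_h\bu,\be_h).
\end{equation*}
By Lemma \ref{coe} the left-hand side dominates $\gamma(h)\|\be_h\|_V^2$ (up to a constant). By Lemma \ref{errest}, each of the three terms on the right is bounded by $h^k\|\bf\|_{k-1}\|\be_h\|_V$, modulo the stabilization term which carries an extra harmless $\gamma(h)\le 1$ factor. Absorbing $\|\be_h\|_V$ on the right, one obtains
\begin{equation*}
\gamma(h)\|\be_h\|_V^2 \lesssim h^k\|\bf\|_{k-1}\|\be_h\|_V,
\end{equation*}
hence $\|\be_h\|_V \lesssim \gamma(h)^{-1}h^k\|\bf\|_{k-1}$.

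Next I would estimate $\|\bu - Q_h\bu\|_V$ by the same computation as in Lemma \ref{delta}, using Lemma \ref{projection} and the commutativity identities \eqref{exchange2}--\eqref{exchange3}. This produces the bound $\|\bu - Q_h\bu\|_V \lesssim h^k(\|\bu\|_{k+1} + \lambda^{1/2}\|\di\bu\|_k)$. Invoking the regularity estimate \eqref{reg}, this is $\lesssim h^k\|\bf\|_{k-1}$, which (since $\gamma(h)\le 1$) is dominated by $\gamma(h)^{-1}h^k\|\bf\|_{k-1}$. The triangle inequality $\|\bu - \bu_h\|_V \le \|\bu - Q_h\bu\|_V + \|\be_h\|_V$ then gives the asserted estimate.

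The only mildly subtle point is keeping the $\lambda$-dependence under control in the $V$-norm of $\bu - Q_h\bu$: the $\lambda\|\nabla_w\cdot(\bu - Q_h\bu)\|_T^2$ contribution is handled by the commuting projection identity \eqref{exchange3}, which turns it into $\lambda\|\di\bu - \Q_h(\di\bu)\|_T^2$, then Lemma \ref{projection} together with \eqref{reg} absorbs the $\lambda$ into $\|\bf\|_{k-1}$. This is the main technical obstacle; the remaining arithmetic is routine bookkeeping of the $h^k$ and $\gamma(h)^{-1}$ powers.
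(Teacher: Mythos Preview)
Your proposal is correct and follows essentially the same route as the paper: test \eqref{errequ} with $\be_h$, invoke coercivity (Lemma \ref{coe}) and the consistency bounds (Lemma \ref{errest}) to get $\|\be_h\|_V\lesssim\gamma(h)^{-1}h^k\|\bf\|_{k-1}$, then finish by the triangle inequality and the projection estimate from Lemma \ref{delta}.

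One small correction to your discussion of the $\lambda$-term: by \eqref{property2} (equivalently, the combination of \eqref{exchange3} with the fact that $\nabla_w\cdot\bu=\Q_h(\di\bu)$ for smooth $\bu$), the quantity $\nabla_w\cdot(\bu-Q_h\bu)$ vanishes identically on each $T$, so the $\lambda\sumT\|\nabla_w\cdot(\bu-Q_h\bu)\|_T^2$ contribution to $\|\bu-Q_h\bu\|_V^2$ is exactly zero, not $\lambda\|\di\bu-\Q_h(\di\bu)\|^2$. This is how the paper's proof of Lemma \ref{delta} silently drops that term. Your workaround via \eqref{reg} would also suffice, but the exact cancellation is cleaner and avoids any $\lambda$-bookkeeping.
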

\begin{proof}
	Taking $\bv_h=\be_h$ in \eqref{errequ} and it follows from Lemma \ref{errest} that
	\begin{align*}
		\gamma(h)||\be_h||_V^2\lesssim& 
		a_w(\be_h,\be_h)
		\\
		=&
		\varphi(\bu,\be_h)+\xi(\bu,\be_h)+s(Q_h\bu,\be_h)
		\\
		\lesssim&
		h^k\|\bf\|_{k-1}||\be_h||_V,
	\end{align*}
	which implies
	\begin{eqnarray*}
		||\be_h||_V\lesssim\gamma(h)^{-1}h^k\|\bf\|_{k-1}.
	\end{eqnarray*}
	From Lemma $\ref{delta}$ we have
	\begin{eqnarray*}
		||\bu-\bu_h||_V\leq||Q_h\bu-\bu_h||_V+||Q_h\bu-\bu||_V\lesssim \gamma(h)^{-1}h^k\|\bf\|_{k-1}.
	\end{eqnarray*}
	Thus, the proof is completed.
\end{proof}

\begin{comment}
Then we use the duality argument to derive the $L^2$ error estimate. Consider the following dual problem: Find $\bPhi\in\bh_E^1(\Omega)$ such that
\begin{align}\label{dual}
	\left\{\begin{array}{rcl}
		-\nabla\cdot\sigma(\bPhi)&=&\be_0,~~in~\Omega,\\
		\bPhi&=&\b0,~~~on~\Gamma_D\subset\partial\Omega,\\
		\sigma(\bPhi)\bn&=&\b0,~~~on~\Gamma_N\subset\partial\Omega.
	\end{array}\right.
\end{align}
Suppose that the solution of \eqref{dual} has $\bh^2(\Omega)$-regularity and satisifies
\begin{eqnarray}\label{regdual}
	\|\bPhi\|_2+\lambda\|\di\bPhi\|_1\lesssim\|\be_0\|,
\end{eqnarray}
where the hidden constant $C$ is independent of $\lambda$.
\end{comment}

%By using the Nistche's technique and \eqref{regdual}, there holds the following $L^2$ error estimate.
By using the Nistche's technique, there holds the following $L^2$ error estimate.
\begin{lemma}\label{Xnorm}
	%Suppose that $\bu\in\bh^{k+1}(\Omega)$ and \eqref{regdual} holds for the dual problem \eqref{dual}, then we have
	Suppose that $\bu\in\bh^{k+1}(\Omega)$ and the dual problem has $H^2$-regularity, then we have
	\begin{eqnarray*}
		||\bu-\bu_h||_X\lesssim\gamma(h)^{-1}h^{k+1}\|\bf\|_{k-1}.
	\end{eqnarray*}
\end{lemma}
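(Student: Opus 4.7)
The plan is to execute a Nitsche-style duality argument, parallel to the analysis of Lemma \ref{Vnorm} but with an auxiliary dual problem providing the extra factor of $h$. Let $\be_{h,0} = Q_0\bu - \bu_0$ be the interior part of $\be_h$. I introduce the dual problem: find $\bPhi\in\bh_E^1(\Omega)$ such that $a(\bPhi,\bv) = (\be_{h,0},\bv)$ for all $\bv\in\bh_E^1(\Omega)$, corresponding to $-\nabla\cdot\sigma(\bPhi) = \be_{h,0}$ with the same homogeneous boundary conditions on $\Gamma_D$ and $\Gamma_N$. The postulated $H^2$-regularity of the dual problem, analogous to \eqref{reg} with $k=1$, supplies $\|\bPhi\|_2 + \lambda\|\di\bPhi\|_1 \lesssim \|\be_{h,0}\|$, and this is exactly the estimate that will keep the final bound locking-free.

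The next step is to repeat the integration-by-parts computation of \eqref{1}--\eqref{2} with $\bPhi$ in place of $\bu$ to obtain a ``dual error equation'' of the same shape as \eqref{errequ}: for every $\bw_h\in V_h$,
\begin{equation*}
a_w(Q_h\bPhi,\bw_h) = (\be_{h,0},\bw_{h,0}) + \varphi(\bPhi,\bw_h) + \xi(\bPhi,\bw_h) + s(Q_h\bPhi,\bw_h).
\end{equation*}
I then take $\bw_h = \be_h$ in this identity, and $\bv_h = Q_h\bPhi$ in the primal error equation \eqref{errequ}. Subtracting the two and using the symmetry of $a_w$ eliminates the $a_w$-terms and yields
\begin{equation*}
\|\be_{h,0}\|^2 = \varphi(\bPhi,\be_h) + \xi(\bPhi,\be_h) + s(Q_h\bPhi,\be_h) - \varphi(\bu,Q_h\bPhi) - \xi(\bu,Q_h\bPhi) - s(Q_h\bu,Q_h\bPhi).
\end{equation*}

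The remaining work is to estimate each of these six terms exactly in the spirit of Lemma \ref{errest}, but now one factor carries an $\bh^2$-norm of $\bPhi$ and the other carries $\|\be_h\|_V$ already bounded by Lemma \ref{Vnorm}. By Cauchy--Schwarz, the trace inequality, and Lemma \ref{projection}, the terms of the form $\varphi(\bPhi,\be_h)$ contribute $h\,\|\bPhi\|_2\,\|\be_h\|_V \lesssim h \cdot \|\be_{h,0}\| \cdot \gamma(h)^{-1}h^k\|\bf\|_{k-1}$, while $\varphi(\bu,Q_h\bPhi)$ and $\xi(\bu,Q_h\bPhi)$ yield $h^k\|\bu\|_{k+1}\cdot h\|\bPhi\|_2$ and $h^k(\lambda\|\di\bu\|_k)\cdot h\|\bPhi\|_2$, respectively. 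The stabilization terms carry an extra $\gamma(h)$, which is absorbed harmlessly since the boundary factor in $s(Q_h\bu,Q_h\bPhi)$ furnishes $h^{k+1/2}\|\bu\|_{k+1}\cdot h^{3/2}\|\bPhi\|_2$. Dividing by $\|\be_{h,0}\|$, invoking \eqref{reg} on $\bu$ and the dual regularity on $\bPhi$, produces $\|\be_{h,0}\|\lesssim \gamma(h)^{-1}h^{k+1}\|\bf\|_{k-1}$; the triangle inequality $\|\bu-\bu_0\|\le \|\bu-Q_0\bu\|+\|Q_0\bu-\bu_0\|$ combined with Lemma \ref{projection} finishes the proof.

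The hard part will be the bookkeeping of the $\lambda$-dependence so that the constant remains uniform in $\lambda$: the terms containing $\lambda$ must always be grouped as $\lambda\|\di\bu\|_k$ and $\lambda\|\di\bPhi\|_1$, which are controlled by $\|\bf\|_{k-1}$ and $\|\be_{h,0}\|$ respectively via the primal and dual elliptic regularity, rather than allowing a bare $\lambda$ to multiply an $h$-power. A secondary subtlety is handling the stabilization cross terms, where both factors involve boundary jumps and one must be careful to use $Q_b(\bu-Q_0\bu)$ to extract an additional half-power of $h$ via the trace inequality.
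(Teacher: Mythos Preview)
Your proposal is correct and follows essentially the same Nitsche duality argument as the paper: the same dual problem, the same six-term decomposition of $\|\be_{h,0}\|^2$, and the same use of Lemma~\ref{errest} (at order $k=1$ for the $\bPhi$-terms) together with Lemma~\ref{Vnorm} and the primal/dual regularity to close the bound. The only minor difference is that the paper estimates $\varphi(\bu,Q_h\bPhi)+\xi(\bu,Q_h\bPhi)$ by splitting $Q_0\bPhi-Q_b\bPhi=(Q_0\bPhi-\bPhi)+(\bPhi-Q_b\bPhi)$ and using the exact cancellation $\sumT\la\bPhi-Q_b\bPhi,\sigma(\bu)\bn\ra=0$, whereas your direct bound via $\|Q_0\bPhi-Q_b\bPhi\|_{\partial T}\le\|Q_0\bPhi-\bPhi\|_{\partial T}\lesssim h_T^{3/2}\|\bPhi\|_{2,T}$ achieves the same $h^{k+1}$ order without that trick; note, however, that your stated power count for $s(Q_h\bu,Q_h\bPhi)$ omits the $h_T^{-1}$ weight and should read $\gamma(h)h^{k+1}$ rather than $\gamma(h)h^{k+2}$, which is still harmless.
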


	\bibliographystyle{siam}
\bibliography{library}
\end{document}